        \definecolor{pink}{rgb}{1,0,1}
        \definecolor{purple}{rgb}{0.4,0.2,1}
\author{Robert Fulsche}
\author{Medet Nursultanov}
\title[The Sturm-Liouville operator with measure potentials]{Spectral Theory for Sturm-Liouville operators with measure potentials through Otelbaev's function}
\date{\today}
\address{Robert Fulsche, 
Institut f\"ur Analysis,
Leibniz Universit\"at Hannover, 
Welfengarten 1, 30167 Hannover, Germany
}
\email{\href{mailto:fulsche@math.uni-hannover.de }{fulsche@math.uni-hannover.de}}
\address{Medet Nursultanov, School of Mathematics and Statistics,
University of Sydney, Sydney, Australia}
\email{\href{mailto:medet.nursultanov@gmail.com}{medet.nursultanov@gmail.com}}
\keywords{1D Schr\"odinger operator, measure potential, distribution of eigenvalues, estimate of eigenvalues}  
\subjclass[2010]{Primary 34L15, 34L20 , Secondary 34E15}
\thanks{The second author is partially supported by ARC DP190103302 and ARC DP190103451 during this work.}
\def\colour{\colour}
\mathchardef\semic="303B
\newcommand{\barint}{\mbox{$ave \int$}}
\def\barint_#1{\mathchoice
            {\mathop{\vrule width 6pt
height 3 pt depth -2.5pt
                    \kern -8.8pt
\intop}\nolimits_{#1}}%
            {\mathop{\vrule width 5pt height
3 pt depth -2.6pt
                    \kern -6.5pt
\intop}\nolimits_{#1}}%
            {\mathop{\vrule width 5pt height
3 pt depth -2.6pt
                    \kern -6pt
\intop}\nolimits_{#1}}%
            {\mathop{\vrule width 5pt height
3 pt depth -2.6pt
          \kern -6pt \intop}\nolimits_{#1}}}
\definecolor{gr}{rgb}   {0.,   0.8,   0. }
\definecolor{bl}{rgb}   {0.,   0.5,   1. }
\definecolor{mg}{rgb}   {0.7,  0.,    0.7}
\renewcommand{\emptyset}{\varnothing}
 \newtheorem{theorem}{Theorem}[section]
 \newtheorem{lemma}[theorem]{Lemma}
 \newtheorem{corollary}[theorem]{Corollary}
 \newtheorem{proposition}[theorem]{Proposition}
 \theoremstyle{definition}
 \newtheorem{definition}[theorem]{Definition}
 \newtheorem{example}[theorem]{Example}
 \newtheorem{remark}[theorem]{Remark}
\begin{document}

\vspace*{-2em}
\maketitle

\begin{abstract}
	We investigate the spectral properties of Sturm-Liouville operators with measure potentials. We obtain two-sided estimates for the spectral distribution function of the eigenvalues. As a corollary, we derive a criterion for the discreteness of the spectrum and a criterion for the membership of the resolvents to Schatten classes. We give two side estimates for the lower bound of the essential spectrum. Our main tool in achieving this is Otelbaev's function.
\end{abstract}

\tableofcontents
\section{Introduction}
Spectral properties of Schr\"{o}dinger and Sturm-Liouville operators have been studied for more
than a century due to numerous applications in physics. There are many publications in this field; we mention only the books \cite{Amrein_Hinz_Pearson, Berezin_Shubin, Cycon_Froese_Kirsch_Simon, T1962, N1968, LS1975, KS1979, LS1991}. Usually, the potential is given by a function with a certain regularity. However, one of the interesting case, which is studied comparably less, is when the potential is given by a measure, $\mu$. In this case, the Sturm-Liouville operator is the operator on $L^2(\mathbb R)$, which can formally be written as the expression
\begin{eqnarray*}
	H_\mu := -\frac{d^2}{dx^2} + \mu.
\end{eqnarray*}
Such operators have been introduced in the physical literature for the description of singular interactions, for instance, point interactions (when $\mu$ is a sum of point measures), and in problems of solid-state physics, nuclear physics, and electromagnetism. There are several approaches to define the Sturm-Liouville operators with distributional potentials, we mention the work \cite{SavchukShkalikov1999}, where the authors define such operators and describe their domains. The way we will take to studying such operators is through quadratic forms.

Sturm-Liouville operators with singular (i.e. measure) potentials became more popular in recent times. In particular, the spectral properties of the Sturm-Liouville operators with point interactions have been studied by various methods. We refer e.g. to \cite{Albeverio_Kostenko_Malamud, Brasche1985, Brasche, Brasche2, M, SavchukShkalikov1999, Brasche_Fulsche, Nursultanov,EKMN} and references therein. 

To the best of the authors' knowledge, Sturm-Liouville operators with measure potentials first appeared in \cite{Zhikov}, where inverse spectral problems for such operators are investigated. The authors of \cite{Brasche1985,Minami} were the first to study self-adjointness and lower semiboundedness of $H_\mu$ for $\mu$ a measure of locally bounded variation.The works \cite{SavchukShkalikov1999,SavchukShkalikov2003,SavchukShkalikov2008} contributed to the theory on finite intervals with potentials in $W^{-1, 2}$. We also mention the papers \cite{HrynivMykytyuk2001,HrynivMykytyuk2002, AlbeverioHrynivMykytyuk2005} for contributions to the spectral theory of Sturm-Liouville operators with singular potentials.

Most of these works consider properties such as self-adjointness, semi-boundedness of the operator, discreteness of the spectrum. However, there is not much information on the distribution of the spectrum. We mention works \cite{Otelbaev, Otelbaev2, R1974, Rozenblum1975} where the authors study the asymptotic behaviour of the eigenvalues of the Schr\"odinger operators with relaxed regularity assumptions on the potentials.

In this work, we study the spectrum of operators $-\frac{d^2}{dx^2} + \mu$, where $\mu$ is a real Radon measure which is ``lower bounded'' in a suitable sense. Our main tool will be the Otelbaev function $q_{\mu_+}^\ast$ associated to the positive part $\mu_+$ of $\mu$. The Otelbaev function first appeared in works by its name giver, Kazakh mathematician Mukhtarbai O. Otelbaev, in the 1970s, when he successfully studied spectral properties of operators $-\frac{d^2}{dx^2} + q$, where $q$ was some lower bounded function \cite{Otelbaev, Otelbaev2}. The main point of using the Otelbaev function (which Otelbaev himself simply denoted as $q^\ast$) was that the method essentially does not need any smoothness assumptions on the potential $q$. Therefore, it is only logical to check if the method works for even more singular potentials, i.e. measures. This is indeed the case, as we shall see in this work. 

We denote by $N(\lambda, A)$ the spectral distribution function of the self-adjoint lower semibounded operator $A$. Upon implementing the method of Otelbaev's function into the setting of measure potentials, we obtain the following result as our main theorem (under reasonable assumptions on the potential $\mu$, which we shall explain later): For any $\lambda \geq 0$ the following estimates hold true:
\begin{equation}\label{main}
M(c_0\lambda+\gamma_0)\leq N(\lambda, H_{\mu}) \leq M(c_1\lambda+\gamma_1),
\end{equation}
where
\begin{equation*}
M(\lambda):=\sqrt{\lambda} ~ \mathcal{L}\left( \{x\in \mathbb{R}: \; q^*_{\mu_+}(x)\leq \lambda\}\right),
\end{equation*}
$\mathcal{L}$ is the Lebesgue measure of the set, $c_0,c_1 \geq 0$ are known constants, and $\gamma_0,\gamma_1 \geq 0$ are constants related to the boundedness of the negative part of the measure $\mu$, which are found more explicitly. These estimates have several applications. By investigating the asymptotic behaviour of $N(\lambda,H_{\mu})$, we derive a criterion for the discreteness of the spectrum of $H_{\mu}$ as well as a Molchanov type criterion. We note that the Molchanov-type criterion is not new: It was proved in \cite{Albeverio_Kostenko_Malamud}, and later improved in \cite{M}, where potentials satisfying Brinck's condition were considered. We also note that sufficient conditions for discreteness of non-semibounded Hamiltonians $H_\mu$ with discrete measure $\mu$ were obtained in \cite{KM2010}, and \cite{IK2010} (see also the survey \cite{KM2013}).

As an another application, we obtain a criterion for the membership of the resolvents of $H_\mu$ in Schatten-von Neumann classes.

We emphasize that \eqref{main} holds for all $\lambda\geq 0$, which allow us, additionally, to obtain several estimates for the eigenvalues. We will present these consequences at the end of our work.

This paper is structured as follows: Section 2 is the technical foundation of this work. Here, we will give precise definitions of our class of measure potentials, discuss the quadratic forms associated to the operators $H_\mu$ and recall several facts from spectral theory. Section 3 serves for the introduction of the Otelbaev function and a discussion of some of its properties. In Section 4, we derive our main theorem, the estimates for the spectral distribution function. Section 5 will present the above-mentioned applications of this theorem. Finally, in Section 6 we will discuss several examples. 

\section{Preliminaries}

\subsection{Notations}
Throughout this paper, $\mathcal{L}$ denotes the Lebesgue measure on $\mathbb R$. We let $\mathrm{D}(\cdot)$ denote the domain of either an operator or a form. By $\sigma(\cdot)$ and $\sigma_{ess}(\cdot)$, we denote spectrum and essential spectrum of an operator, respectively. By $\delta_x$, we denote the Dirac measure concentrated at $x \in \mathbb R$. For $x \in \mathbb R$, by $[x]$ we mean the largest integer not exceeding $x$.
\subsection{Classes of potentials}
Recall that a (positive) measure $\mu$ on the Borel-$\sigma$-algebra $\mathcal B(\mathbb R)$ of $\mathbb R$ is a Radon measure if it satisfies the following properties of local finiteness, outer regularity and inner regularity:
\begin{align*}
\mu(K) &< \infty \text{ for all } K \subset \mathbb R \text{ compact,}\\
\mu(K) &= \inf \{ \mu(O); K \subset O, ~ O \text{ open}\} \text{ for all } K \subset \mathbb R \text{ compact,}\\
\mu(O) &= \sup \{ \mu(K); K \subset O, ~ K \text{ compact}\} \text{ for all } O \subset \mathbb R \text{ open.}
\end{align*}

We will write $\mathfrak P$ for the class of all such positive Radon measures. 
The set of potentials we are interested in is, formally, given by differences of such Radon measures. More precisely, for $\beta\geq0$, we define
\begin{equation*}
\mathfrak{M}_{\beta} = \left\{ (\mu_{+},\mu_{-}) \in   \mathfrak P\times \mathfrak P: \; \mu_{+}(\mathbb{R})>0 \text{ and } \sup_{x\in \mathbb{R}}\mu_{-}([x,x+1]) \leq \beta \right\},
\end{equation*}
and, formally, write 
\begin{equation*}
\mu = \mu_{+} - \mu_{-}, \quad \mu \in \mathfrak{M}_{\beta}.
\end{equation*}
Note that $\mu$ is always a well-defined signed measure on bounded measurable subsets of $\mathbb R$. Nevertheless, if both $\mu_{+}$, $\mu_{-}$ are infinite, the resulting $\mu$ might not be well-defined as a measure on $\mathbb R$:
\begin{example}
	Let
	\begin{equation*}
	\mu_+ = \sum_{k \in \mathbb Z} \delta_k, \quad \mu_- = \sum_{k \in \mathbb Z} \delta_{k + 1/2}.
	\end{equation*}
	Then, $\mu = \mu_+ - \mu_-$ is not a well-defined measure on $\mathcal B(\mathbb R)$. For example, $\mu(\mathbb R) = \infty - \infty$ makes no sense.
\end{example}
However, if one of the two measures $\mu_+, \mu_-$ is finite, then the difference, $\mu = \mu_{+} - \mu_{-}$, is a (signed) measure. In case $\mu_{-} = 0$, we identify $\mu$ with $\mu_{+}$, and simply write $\mu \in \mathfrak P$.

For any $\mu \in \mathfrak {M}_{\beta}$, we always choose $l > 0$ and $\alpha \geq 0$ such that the following hold true: For each interval $I$ of length $\geq l$ we have $\mu_-(I) \geq \alpha$. At first sight, this seems to be a bit odd, since this is satisfied for any $l > 0$ and $\alpha = 0$. The point is the following: If we can choose $l > 0$ and $\alpha > 0$, which is not always possible, then this will yield additional spectral information. In what follows, given $\mu \in \mathfrak {M}_{\beta}$ we will always denote by $\alpha, l$ the above constants. 
\begin{example} 
		Let $\mu_{+}= \mathcal{L}$, $0 < a < b < \infty$, and $\{\alpha_k\}_{k \in \mathbb Z} \subset [a, b]$. Further, let $d > 0$. Then, for
		\begin{align*}
		\mu_- = \sum_{k \in \mathbb Z} \alpha_k \delta_{dk}
		\end{align*}
		we can choose the constants $l > d$, $\alpha \leq a$ and $\beta \geq b( [1/d] + 1)$.
\end{example}	
		
\begin{example}
    If $f: \mathbb R \to [a, b]$, where $0 < a < b$, then for
	\begin{align*}
	\mathrm {d}\mu_-(x) = f(x) ~\mathrm{d}x
	\end{align*}
	we can choose $l > 0$, $\alpha \leq la$ and $\beta \geq b$.
\end{example}

\subsection{Formulation of the problem}
Let $\mu \in \mathfrak {M}_{\beta}$. We want to investigate the spectral properties of the operator $H_{\mu}$ associated to the quadratic form
\begin{equation*}
a_{\mu}(f,g)=\int_{\mathbb{R}}f'(x)\overline{g'(x)}~\mathrm dx+\int_{\mathbb{R}}f\overline{g}~\mathrm d\mu_+-\int_{\mathbb{R}}f\overline{g}~\mathrm d\mu_-
\end{equation*}
with domain
\begin{equation*}
\mathrm{D}(a_{\mu})=\left\{f\in H^1(\mathbb{R}): \lim_{\substack{x \to \infty\\ y \to -\infty}} \int_{y}^x |f|^2 ~\mathrm d\mu \text{ exists} \right\}
\end{equation*}
in the sense of Kato's first representation theorem \cite[Theorem 2.6]{Kato}. This is justified by the following fact.
\begin{proposition}\label{form}
Let $\beta\geq 0$ and $\mu \in \mathfrak {M}_{\beta}$, then the form $a_\mu$ is lower semibounded with bound $-2\max\{\beta, \beta^2\}$, closed, symmetric and densely defined.
\end{proposition}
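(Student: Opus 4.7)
The plan is to reduce everything to a single Sobolev-type estimate on unit intervals. Symmetry of $a_\mu$ is immediate from the structure of the form. Density follows by observing that $C_c^\infty(\mathbb{R}) \subset \mathrm{D}(a_\mu)$: for any compactly supported smooth $f$, local finiteness of $\mu_\pm$ ensures $\int |f|^2 \, d\mu_{\pm} < \infty$, so the defining limit trivially exists.

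The main analytic ingredient is the interval inequality
$$\sup_{x \in [n, n+1]} |f(x)|^2 \leq (1 + \epsilon^{-1}) \|f\|_{L^2([n,n+1])}^2 + \epsilon \|f'\|_{L^2([n,n+1])}^2,$$
valid for every $f \in H^1(\mathbb{R})$, $n \in \mathbb{Z}$ and $\epsilon > 0$; it follows from integrating the identity $|f(x)|^2 - |f(y)|^2 = 2\, \mathrm{Re} \int_y^x f \overline{f'}\, dt$ in $y$ over $[n, n+1]$ and applying Cauchy--Schwarz and Young. Multiplying by $\mu_-([n, n+1]) \leq \beta$, summing over $n \in \mathbb{Z}$ and telescoping the $L^2$-norms gives
$$\int_\mathbb{R} |f|^2 \, d\mu_- \leq \beta \epsilon \|f'\|_{L^2(\mathbb{R})}^2 + \beta(1+\epsilon^{-1}) \|f\|_{L^2(\mathbb{R})}^2.$$
Setting $\epsilon = 1/\beta$ (the case $\beta = 0$ forces $\mu_- \equiv 0$ and is trivial) and using $\beta + \beta^2 \leq 2 \max\{\beta, \beta^2\}$ produces $\int |f|^2 d\mu_- \leq \|f'\|_{L^2}^2 + 2 \max\{\beta, \beta^2\} \|f\|_{L^2}^2$. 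Substituting into $a_\mu(f,f)$ and discarding the nonnegative term $\int |f|^2 d\mu_+$ then yields the claimed semibounded bound.

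For closedness, I would appeal to the KLMN theorem with the auxiliary nonnegative form $\tilde{a}(f,g) := \int_\mathbb{R} f' \overline{g'}\, dx + \int_\mathbb{R} f \overline{g}\, d\mu_+$ on $\mathrm{D}(\tilde a) = H^1(\mathbb{R}) \cap \{f : \int |f|^2 \, d\mu_+ < \infty\}$. This form is closed as the sum of two closed nonnegative forms (the Dirichlet form on $H^1$; for the measure part, closedness follows from Fatou's lemma combined with the fact that $H^1$-convergence yields locally uniform convergence of the continuous representatives, which ensures integrals against $\mu_+$ behave well). The main estimate with $\epsilon$ arbitrarily small shows that $f \mapsto \int |f|^2 \, d\mu_-$ is $\tilde a$-form bounded with relative bound $0$, and it also shows that $\int |f|^2 \, d\mu_- < \infty$ for every $f \in H^1(\mathbb{R})$; consequently the limit in the definition of $\mathrm{D}(a_\mu)$ exists precisely when $\int |f|^2 \, d\mu_+ < \infty$, i.e.\ $\mathrm{D}(a_\mu) = \mathrm{D}(\tilde a)$. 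The KLMN theorem then delivers closedness of $a_\mu$. The main obstacle is the calibration of $\epsilon$ so as to hit the exact constant $-2\max\{\beta,\beta^2\}$, together with the clean identification $\mathrm{D}(a_\mu) = \mathrm{D}(\tilde a)$ and the verification of closedness of the measure-multiplication piece of $\tilde a$; these are routine but require care at the degenerate value $\beta = 0$.
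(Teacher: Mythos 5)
Your proof is correct and follows essentially the same route as the paper's: a Sobolev-type sup-estimate on intervals (your inequality is Brinck's Lemma with the interval length replaced by a free Young parameter $\epsilon$; the paper instead applies \cite[Lemma 2]{Brinck} on intervals of length $d<\min\{1,1/\beta\}$), used interval-by-interval to control $\int|f|^2\,\mathrm d\mu_-$ by $\beta\epsilon\|f'\|^2+\beta(1+\epsilon^{-1})\|f\|^2$, followed by the KLMN theorem for closedness and the identification $\mathrm D(a_\mu)=\mathrm D(a_{\mu_+})$. One small caveat: ``sum of two closed nonnegative forms'' is not the right justification for closedness of $\tilde a=a_{\mu_+}$, since $f\mapsto\int|f|^2\,\mathrm d\mu_+$ on its own is in general not even closable in $L^2(\mathbb R,\mathrm dx)$ (e.g.\ $\mu_+=\delta_0$); however, the direct argument you then supply --- $H^1$-convergence yields locally uniform convergence of the continuous representatives, combined with Fatou --- is exactly what is needed, and is the content of the result of Brasche that the paper cites at this step.
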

We note that Proposition \ref{form} is not new: A stronger result is obtained in \cite{M}, see also \cite{Albeverio_Kostenko_Malamud} for related results. In principle, it is a simple extension of Brasche's results \cite{Brasche1985}, who seemingly was the first to study properties of Schr\"{o}dinger operators with general measure potential, such as self-adjointness and lower semiboundedness, through form methods. Indeed, if $\mu \in \mathfrak P$, the proposition is exactly \cite[Theorem 1]{Brasche1985}, while the full statement of the proposition is very close to, but not immediately implied by \cite[Theorem 3]{Brasche1985}. For the reader's convenience, we briefly demonstrate how to extend \cite[Theorem 1]{Brasche1985} to obtain this statement. The proof hinges on the following well-known fact:
\begin{lemma}[{\cite[Lemma 2]{Brinck}}]\label{lemma_Brinck}
Let $I$ be an interval of length $d$ and $f \in H^1(I)$. Then, for all $y \in I$ the following estimate holds true:
\[ \frac{1}{2d} \int_{I}|f(x)|^2 ~\mathrm dx - \frac{d}{2}\int_{I}|f'(x)|^2 ~\mathrm dx \leq |f(y)|^2 \leq \frac{2}{d} \int_{I}|f(x)|^2 ~\mathrm dx + d \int_{I}|f'(x)|^2 ~\mathrm dx.\]
\end{lemma}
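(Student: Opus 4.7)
The plan is to obtain both inequalities from the fundamental theorem of calculus applied to $|f|^2$. Since $f \in H^1(I)$, the function $|f|^2$ is absolutely continuous on $I$, so for every pair $x, y \in I$,
\begin{equation*}
|f(y)|^2 - |f(x)|^2 = 2 \int_x^y \Re (\overline{f(t)} f'(t)) \, dt.
\end{equation*}
First I would integrate this identity with respect to $x$ over $I$ and apply Fubini to the double integral on the right, which yields a master identity of the form
\begin{equation*}
d \, |f(y)|^2 = \int_I |f(x)|^2 \, dx + 2 \int_I K(t) \, \Re(\overline{f(t)} f'(t)) \, dt,
\end{equation*}
where $K(t)$ is an explicit kernel (essentially a signed distance from $t$ to the endpoints of $I$, with a jump of size $d$ at $y$) satisfying the pointwise bound $|K(t)| \le d$ throughout $I$.

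For the upper bound, I would control the cross term by Cauchy--Schwarz followed by Young's inequality:
\begin{equation*}
2 \left| \int_I K(t) \, \overline{f(t)} f'(t) \, dt \right| \le 2 d \, \|f\|_{L^2(I)} \|f'\|_{L^2(I)} \le \|f\|_{L^2(I)}^2 + d^2 \|f'\|_{L^2(I)}^2.
\end{equation*}
Substituting this into the master identity and dividing by $d$ produces the claimed upper estimate. For the lower bound, the same master identity yields $d \, |f(y)|^2 \ge \|f\|_{L^2(I)}^2 - 2 d \, \|f\|_{L^2(I)} \|f'\|_{L^2(I)}$, and Young's inequality in the form $2ab \le \lambda a^2 + \lambda^{-1} b^2$ for a suitable $\lambda > 0$ then yields a lower bound on $|f(y)|^2$ of the desired form.

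The main obstacle will be calibrating the balance in Young's inequality so as to extract exactly the stated constants $\tfrac{1}{2d}$ and $\tfrac{d}{2}$: the most naive calibration produces the weaker lower bound with $-d \int_I |f'|^2$ or $-2d \int_I |f'|^2$ in place of $-\tfrac{d}{2} \int_I |f'|^2$. To gain the missing factor, one likely needs to replace the uniform integration in $x$ by a more carefully chosen weight (minimizing the $L^\infty$ norm of the resulting Fubini kernel as a function of $y$), or to invoke a Poincar\'e--Wirtinger inequality on $I$ to bridge between $d^{-1} \int_I |f|^2 \, dx$ and $|d^{-1} \int_I f \, dx|^2$ and thereby sharpen the coefficient of $\int_I |f'|^2$ in the final estimate.
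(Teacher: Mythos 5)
The paper does not actually prove this lemma---it is quoted from Brinck's paper with a citation---so there is no internal proof to compare against, and your proposal has to stand on its own. Your treatment of the \emph{upper} bound does: the averaged identity $d\,|f(y)|^2 = \int_I |f|^2\,\mathrm dx + 2\int_I K(t)\,\Re(\overline{f(t)}f'(t))\,\mathrm dt$ with $|K|\le d$, followed by Cauchy--Schwarz and $2ab\le \tfrac1d a^2 + d\,b^2$, yields exactly $\tfrac2d\int_I|f|^2 + d\int_I|f'|^2$.

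The \emph{lower} bound is left with a genuine gap, which you flag yourself: your calibration only produces $-d\int_I|f'|^2$ or $-2d\int_I|f'|^2$, and neither proposed repair is carried out. The re-weighting idea faces a real obstruction: for the uniform weight the kernel's sup norm is $\max(y-a,\,a+d-y)$ on $I=[a,a+d]$, which equals $d$ whenever $y$ is an endpoint (the case that must be covered, e.g.\ in the proof of Proposition \ref{form} one evaluates at arbitrary $y\in I_k$); the favourable value $d/2$ occurs only for $y$ at the midpoint, and any non-uniform weight distorts the term $\int w|f|^2$ away from $\tfrac1d\int_I|f|^2$. The Poincar\'e--Wirtinger route can be pushed through, but only by invoking the sharp constant $1/\pi^2$, and it lands at roughly $0.495\,d$ in place of $d/2$---too delicate to leave as a sketch. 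The clean fix is to apply the fundamental theorem of calculus to $f$ itself rather than to $|f|^2$: Cauchy--Schwarz gives $|f(x)-f(y)|^2 = \bigl|\int_y^x f'\bigr|^2 \le |x-y|\int_I|f'|^2$, hence both $|f(x)|^2 \le 2|f(y)|^2 + 2|x-y|\int_I|f'|^2$ and $|f(y)|^2 \le 2|f(x)|^2 + 2|x-y|\int_I|f'|^2$; integrating over $x\in I$ and using $\int_I|x-y|\,\mathrm dx\le d^2/2$ yields
\[ \int_I|f|^2\,\mathrm dx \le 2d\,|f(y)|^2 + d^2\int_I|f'|^2\,\mathrm dx \quad\text{and}\quad d\,|f(y)|^2 \le 2\int_I|f|^2\,\mathrm dx + d^2\int_I|f'|^2\,\mathrm dx, \]
which are precisely the two stated inequalities with the stated constants.
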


\begin{proof}[Proof of Proposition \ref{form}]
Symmetry is obvious. Further, $C_c^\infty(\mathbb R) \subset \mathrm D(a_\mu)$, hence the form is densely defined. If $\mu \in \mathfrak P$, then $a_\mu$ is obviously positive and, by \cite[Theorem 1]{Brasche1985} also closed with
\begin{equation*}
\mathrm D(a_\mu) = \{ f \in H^1(\mathbb R): ~f \in L^2(\mathbb R, ~\mu)\}. 
\end{equation*}

Let $\beta> 0$ and $\mu \in \mathfrak {M}_{\beta}$. Let $d < \min\{ 1, 1/\beta\}$ and decompose $\mathbb R$ into the intervals $I_k = ((k-1)d, kd]$ for $k \in \mathbb Z$. Then, for every $f \in \mathrm D(a_{\mu_+})$ we have by the previous lemma:
\begin{align*}
\left | \int_{\mathbb R} |f|^2 ~\mathrm d\mu_- \right| &\leq \sum_{k \in \mathbb Z } \left | \int_{I_k} |f|^2 d\mu_{-} \right |\\ 
&\leq \beta \sum_{k \in \mathbb Z} \sup_{x\in I_k}|f(x)|^2\\
&\leq \beta \sum_{k \in \mathbb Z} \left[ \frac{2}{d} \int_{I_k} |f(x)|^2 ~\mathrm dx + d\int_{I_k} |f'(x)|^2 ~\mathrm dx \right]\\
&= \frac{2\beta}{d} \int_{\mathbb R} |f(x)|^2 ~\mathrm dx + \beta d \int_{\mathbb R} |f'(x)|~\mathrm dx\\
&\leq \frac{2\beta}{d} \int_{\mathbb R}|f(x)|^2 ~\mathrm dx + \beta d \left[ \int_{\mathbb R} |f'(x)|^2 ~\mathrm dx + \int_{\mathbb R} |f|^2 ~\mathrm d\mu_{+} \right]
\end{align*}
Since $\beta d < 1$, the KLMN-Theorem \cite[Theorem X.17]{Reed2} implies that $a_\mu$ is closed, lower semibounded with bound $-\frac{2\beta}{d}$ and $\mathrm D(a_\mu) = \mathrm D(a_{\mu_+})$. Since the integral $\int_{\mathbb R} |f|^2 ~\mathrm d\mu_-$ exists for every $f \in H^1(\mathbb R)$ by the above estimates, we also obtain
\begin{align*}
\mathrm D(a_\mu) = \mathrm D(a_{\mu_+}) &= \{ f \in H^1(\mathbb R): ~f \in L^2(\mathbb R, \mu_+)\}\\
 &= \left\{f\in H^1(\mathbb{R}): \lim_{\substack{x \to \infty\\ y \to -\infty}} \int_{y}^x |f|^2 ~\mathrm d\mu \text{ exists} \right\}.
\end{align*}
\end{proof}

\begin{remark}
As we just saw, we obtain the lower bound $-2\max\{\beta, \beta^2\}$. It seems strange that expression for the lower bound might be different depending on $\beta \geq 1$ or $\beta < 1$. We will see in Theorem \ref{estimate_smallest_eigenvalue} that $-3\beta$ is always a lower bound. We note that semi-boundedness of the operator is essential for the method presented here. There are some methods for non-semibounded operators, see the review in \cite{NR2017}.
\end{remark}

\subsection{Auxiliary lemmas}
Here, we will state and prove some auxiliary results. We begin with the following definition.
\begin{definition}
	The spectral distribution function $N(\lambda, A)$ of the self-adjoint operator A is defined as
	\[ N(\lambda, A) = \begin{cases} \sum_{\nu < \lambda} \dim Eig(A, \nu),~ &\sigma_{ess}(A) \cap (-\infty, \lambda) = \emptyset\\
	\infty,& \text{otherwise} \end{cases} \]
	where $Eig(A, \nu)$ is the eigenspace of $A$ w.r.t. the eigenvalue $\nu$ (by definition $\{ 0\}$ if $\nu$ is not an eigenvalue).
\end{definition}
The following lemma is well-known in spectral theory.  We state it here for the reader's convenience.
\begin{lemma}[Glazman Lemma]\label{Glazman}
	Let $A$ be a lower-semibounded self-adjoint operator in a Hilbert space $\mathcal{H}$ with corresponding closed sesquilinear form $a$ and form domain $\mathrm D(a)$. Then, it holds
	\begin{equation*}
	N(\lambda,A)=\sup \{\mathrm{dim} L;~ L \text{ subspace of } \mathrm D(a) \text{ s.th. } a(u,u)<\lambda\langle u,u\rangle \text{ for } u\in L \setminus \{ 0\} \}.
	\end{equation*}
\end{lemma}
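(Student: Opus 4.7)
The plan is to reduce the statement to the spectral theorem for the self-adjoint operator $A$. Writing $A = \int t \, dE_t$, and recalling that $\mathrm{D}(a) = \mathrm{D}(|A|^{1/2})$ with $a(u,u) = \int t \, d\langle E_t u, u\rangle$ for $u \in \mathrm{D}(a)$, the proof splits into the two cases of the definition of $N(\lambda, A)$.

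\emph{Case 1: $N(\lambda, A) < \infty$.} By definition, $\sigma_{ess}(A) \cap (-\infty, \lambda) = \emptyset$, so the spectrum in $(-\infty, \lambda)$ consists of finitely many eigenvalues of finite multiplicity whose supremum $\lambda^\ast$ satisfies $\lambda^\ast < \lambda$. Let $P := E((-\infty, \lambda))$; then $\dim \operatorname{Range}(P) = N(\lambda, A)$, $\operatorname{Range}(P) \subset \mathrm{D}(A) \subset \mathrm{D}(a)$, and for $u \in \operatorname{Range}(P) \setminus \{0\}$ we have $a(u,u) \leq \lambda^\ast \|u\|^2 < \lambda \|u\|^2$. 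This gives the inequality $N(\lambda, A) \leq \sup\{\dim L; \ldots\}$. For the converse, suppose $L \subset \mathrm{D}(a)$ satisfies $a(u,u) < \lambda \|u\|^2$ on $L \setminus \{0\}$ with $\dim L > N(\lambda, A)$. Since $\dim L$ exceeds the rank of $P$, the restriction $P|_L$ has a non-trivial kernel, so there is $u \in L \setminus \{0\}$ with $Pu = 0$. Then the scalar measure $d\langle E_t u, u\rangle$ is supported in $[\lambda, \infty)$, whence $a(u,u) = \int_{[\lambda, \infty)} t \, d\langle E_t u, u\rangle \geq \lambda \|u\|^2$, contradicting the assumption on $L$.

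\emph{Case 2: $N(\lambda, A) = \infty$.} Here $\sigma_{ess}(A) \cap (-\infty, \lambda) \neq \emptyset$, so we may pick $\lambda' \in \sigma_{ess}(A)$ with $\lambda' < \lambda$ and then $\varepsilon > 0$ with $\lambda' + \varepsilon < \lambda$. The spectral projection $Q := E([\lambda' - \varepsilon, \lambda' + \varepsilon])$ has infinite-dimensional range (a characterization of essential spectrum). Any finite-dimensional subspace $L \subset \operatorname{Range}(Q)$ lies in $\mathrm{D}(a)$ and satisfies $a(u,u) \leq (\lambda' + \varepsilon)\|u\|^2 < \lambda \|u\|^2$ for $u \in L \setminus \{0\}$. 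Letting $\dim L \to \infty$ shows the supremum on the right is $\infty = N(\lambda, A)$.

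The main conceptual point, which is the only non-routine issue, is to handle strict inequality throughout: in Case 1 it is ensured by the fact that eigenvalues below $\lambda$ cannot accumulate at $\lambda$ (otherwise $\lambda$ would belong to $\sigma_{ess}(A)$, but one actually needs the stronger fact that there are only finitely many, which follows from $N(\lambda, A) < \infty$), and in Case 2 by inserting the gap $\varepsilon$. Everything else is a direct bookkeeping exercise with the spectral measure. Since the lemma is classical (see e.g. the standard references on spectral theory of self-adjoint operators), a short argument along these lines suffices, and we expect the authors merely to cite it.
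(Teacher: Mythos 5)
The paper does not prove this lemma at all: it is stated as a well-known fact ``for the reader's convenience'' with no argument supplied, so your anticipation that the authors would merely cite it is exactly right, and your spectral-theorem proof is the standard one. Case 1 is correct in both directions: the projection $P = E((-\infty,\lambda))$ has rank $N(\lambda,A)$, its range realizes the supremum with strict inequality because the finitely many eigenvalues below $\lambda$ stay away from $\lambda$, and the dimension-counting argument with $\ker(P|_L) \neq \{0\}$ rules out larger subspaces.

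There is, however, one genuine (if small) gap in Case 2. Under the paper's definition, $N(\lambda,A)=\infty$ does \emph{not} imply $\sigma_{ess}(A)\cap(-\infty,\lambda)\neq\emptyset$: the first branch of the definition can itself produce $\infty$ when $\sigma_{ess}(A)\cap(-\infty,\lambda)=\emptyset$ but infinitely many eigenvalues (counted with multiplicity) accumulate at $\lambda$ from below, with $\lambda\in\sigma_{ess}(A)$. In that sub-case your argument as written fails at the first step, since there is no $\lambda'\in\sigma_{ess}(A)$ with $\lambda'<\lambda$ to choose. The repair is one line: for each $\varepsilon>0$ the projection $E((-\infty,\lambda-\varepsilon))$ has finite rank tending to $\infty$ as $\varepsilon\downarrow 0$, its range lies in $\mathrm{D}(a)$, and $a(u,u)\leq(\lambda-\varepsilon)\|u\|^2<\lambda\|u\|^2$ on it, so the supremum is again $+\infty$. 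With that sub-case added, the proof is complete.
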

We present one last lemma in this section. The result is neither new nor surprising, being a simple consequence of the standard domain-bracketing. Again, we add a proof for completeness.
\begin{lemma}\label{ineq_for_N}
	Let $d>0$ and for each $k \in \mathbb Z$ set $I_k=((k-1)d,kd]$. For $\mu \in \mathfrak M$ consider the operators $H_{\mu}^k$ associated to the sesquilinear forms
	\begin{equation*}
	a_{\mu}^k(f,g)=\int_{I_k}f'(x)\overline{g'(x)}~\mathrm dx+\int_{I_k}f\overline{g}~\mathrm d\mu
	\end{equation*}
	with domain $\mathrm{D}(a_{\mu}^k)= H^1(I_k)$. Then, the following holds true for every $\lambda \in \mathbb R$:
	\begin{equation*}
	N(\lambda,H_{\mu})\leq \sum_{k=-\infty}^{\infty}N(\lambda,H_{\mu}^k).
	\end{equation*}
\end{lemma}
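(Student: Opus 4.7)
\bigskip

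\noindent\textbf{Proof plan.} The strategy is standard Neumann bracketing: decompose $\mathbb{R}$ into the disjoint intervals $I_k$ and compare $H_\mu$ with the orthogonal direct sum $A := \bigoplus_{k\in \mathbb Z} H_\mu^k$ acting on $L^2(\mathbb{R}) = \bigoplus_k L^2(I_k)$. The key observation is that the closed form $a_A$ associated to $A$ is an honest extension of $a_\mu$. Namely, if $f \in \mathrm{D}(a_\mu) \subset H^1(\mathbb{R})$, then each restriction $f|_{I_k}$ belongs to $H^1(I_k) = \mathrm{D}(a_\mu^k)$, and because the $I_k$ partition $\mathbb{R}$ we have
\begin{equation*}
a_\mu(f,f) = \sum_{k \in \mathbb{Z}} \int_{I_k}|f'|^2\, \mathrm dx + \sum_{k \in \mathbb{Z}} \int_{I_k}|f|^2\, \mathrm d\mu = \sum_{k \in \mathbb{Z}} a_\mu^k(f|_{I_k}, f|_{I_k}) = a_A\bigl((f|_{I_k})_k, (f|_{I_k})_k\bigr),
\end{equation*}
with the same identity for the $L^2$-norm. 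The map $\iota: f \mapsto (f|_{I_k})_k$ is linear and injective, because $\bigcup_k I_k = \mathbb{R}$ up to a null set.

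Given these observations, I would apply Glazman's lemma (Lemma \ref{Glazman}) to $H_\mu$: for any $\varepsilon > 0$, choose a subspace $L \subset \mathrm{D}(a_\mu)$ with $\dim L \geq \min\{N(\lambda, H_\mu), 1/\varepsilon\}$ on which $a_\mu(u,u) < \lambda \langle u, u\rangle$ for all $u \in L \setminus \{0\}$. Then $\iota(L)$ is a subspace of $\mathrm{D}(a_A)$ of the same dimension, and by the identity above every nonzero element $(u_k)_k \in \iota(L)$ satisfies $a_A((u_k),(u_k)) < \lambda \sum_k \|u_k\|^2$. Applying Glazman's lemma in the reverse direction to $A$ gives $N(\lambda, A) \geq \dim \iota(L) = \dim L$, hence $N(\lambda, H_\mu) \leq N(\lambda, A)$. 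Since the spectrum of an orthogonal direct sum of self-adjoint operators is the obvious union on the level of eigenvalue-counting, $N(\lambda, A) = \sum_k N(\lambda, H_\mu^k)$, which concludes the argument.

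The only mildly delicate step is verifying that $\iota(L) \subset \mathrm{D}(a_A)$, i.e.\ that $\sum_k \|f|_{I_k}\|_{H^1(I_k)}^2 < \infty$ for $f \in \mathrm{D}(a_\mu)$; this is immediate since the sum equals $\|f\|_{H^1(\mathbb{R})}^2$. One also needs that $a_A$ is closed with domain $\bigoplus_k H^1(I_k)$, which follows form-by-form from the same KLMN argument as in the proof of Proposition \ref{form} applied on each bounded interval $I_k$. No smoothness at the endpoints $kd$ is required, which is precisely why Neumann bracketing (rather than Dirichlet bracketing on $H^1_0(I_k)$) is the natural tool here.
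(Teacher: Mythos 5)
Your proposal is correct and follows essentially the same route as the paper: restrict along the injection $\iota: \mathrm{D}(a_\mu) \hookrightarrow \bigoplus_k H^1(I_k)$, use the additivity $a_\mu(f,f) = \sum_k a_\mu^k(f|_{I_k}, f|_{I_k})$ together with Glazman's lemma to compare $N(\lambda, H_\mu)$ with $N(\lambda, \bigoplus_k H_\mu^k)$, and then sum over $k$. The only cosmetic difference is that the paper phrases the Glazman comparison as an inclusion of two families of test subspaces rather than transporting a single near-maximal subspace, which amounts to the same thing.
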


\begin{proof}
	It holds
	\[ H^1(\mathbb R) \overset{\iota}{\hookrightarrow}  \bigoplus_{k \in \mathbb Z} H^1(I_k), ~ \iota(f) = (f|_{I_k})_{k \in \mathbb{Z}}. \] 
	and hence
	\[ \mathrm D(a_{\mu}) \overset{\iota}{\hookrightarrow} \bigoplus_{k \in \mathbb Z} \mathrm D(a_{\mu}^k) = \bigoplus_{k \in \mathbb Z} H^1(I_k) \]
	injectively. Setting
	\[ A_1 := \{ L \subset \iota(\mathrm D(a_{\mu})); ~ \sum_{k \in \mathbb Z} a_{\mu}^k(f_k, f_k) < \lambda ~ \text{ for } ~ (f_k)_{k \in \mathbb Z} \in L ~ \text{ with } \| f\|_{\oplus_k L^2(I_k)} = 1\} \]
	and 
	\[ A_2 := \{ L \subset \bigoplus_{k \in \mathbb Z} H^1(I_k); ~ \sum_{k \in \mathbb Z} a_{\mu}^k(f_k, f_k) < \lambda ~ \text{ for } ~ (f_k)_{k \in \mathbb Z} \in L ~ \text{ with } \| f\|_{\oplus_k L^2(I_k)} = 1\}, \]
	we trivially have $A_1 \subseteq A_2$. For $f \in \mathrm D(a_{\mu})$ we have
	\[ a_{\mu}(f,f) = \sum_{k \in \mathbb Z} a_{\mu}^k(f|_{I_k}, f|_{I_k}). \]
	Using that 
	\[ L^2(\mathbb R) \cong \bigoplus_{k \in \mathbb Z} L^2(I_k) \]
	isometrically, we have by Glazman's lemma
	\begin{align*}
	N(\lambda, H_{\mu}) = \sup_{L \in A_1} \dim L \leq \sup_{L \in A_2} \dim L = N(\lambda, \bigoplus_{k \in \mathbb Z} H_{\mu}^k)
	\end{align*}
	The simple fact that
	\[ N(\lambda, \bigoplus_{k \in \mathbb Z} H_{\mu}^k) = \sum_{k \in \mathbb Z} N(\lambda, H_{\mu}^k) \]
	finishes the proof.
\end{proof}

\section{Otelbaev's function}
In this section we introduce Otelbaev's function and discuss its properties.
\begin{definition}
	For $\mu \in \mathfrak P$ we set 
	\begin{equation*}
	d_\mu(x) := \sup \{d \geq 0: d\mu([x-d/2, x+d/2]) \leq 1 \}.	
	\end{equation*}
	We then define the Otelbaev function $q_\mu^\ast$ on $\mathbb R$ as
	\begin{equation*}
	q_{\mu}^*(x):= 1/d_\mu(x)^2.
	\end{equation*}
\end{definition}

\begin{remark}
If $\mu \in \mathfrak P$ is a continuous measure, i.e. its cumulative distribution function
\begin{equation*}
F_\mu(x) := \begin{cases}
\mu((0, x]), \quad &x \geq 0\\
-\mu((x, 0]), \quad &x < 0
\end{cases}
\end{equation*}
is continuous, then $d_\mu(x)$ is the unique solution to the equation
\begin{equation*}
d \cdot \mu\left(\left [x- \frac{d}{2}, x + \frac{d}{2} \right ] \right) = 1.
\end{equation*}
\end{remark}

\begin{proposition}\label{prop_of_q}
	For any $0 \neq \mu \in \mathfrak P$ the function $q_{\mu}^{*}$ is strictly positive everywhere.
\end{proposition}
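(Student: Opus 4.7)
The plan is to show, equivalently, that $d_\mu(x) < \infty$ for every $x \in \mathbb R$, since $q_\mu^\ast(x) = 1/d_\mu(x)^2$. I would argue by contradiction: suppose $d_\mu(x) = \infty$ at some point $x \in \mathbb R$. By the definition of $d_\mu(x)$ as a supremum, this gives a sequence $d_n \nearrow \infty$ with
\begin{equation*}
d_n \, \mu([x - d_n/2, \, x + d_n/2]) \leq 1 \quad \text{for all } n.
\end{equation*}

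Next I would exploit that $\mu \neq 0$. Since $\mu \in \mathfrak P$ is a nonzero positive Radon measure, there is some open set $O$ with $\mu(O) > 0$, and by inner regularity there is a compact set $K \subset O$ with $\mu(K) > 0$. Being compact, $K$ is bounded, so for $n$ sufficiently large we have $K \subset [x - d_n/2, \, x + d_n/2]$, and monotonicity of $\mu$ yields $\mu([x-d_n/2,x+d_n/2]) \geq \mu(K) > 0$. Combining with the previous display gives $d_n \mu(K) \leq 1$ for all large $n$, which is impossible since $d_n \to \infty$ while $\mu(K)$ is a fixed positive constant. This contradiction forces $d_\mu(x) < \infty$, and hence $q_\mu^\ast(x) > 0$, as required.

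There is no serious obstacle here; the only point that deserves care is the existence of a compact set of positive measure, which is precisely the content of inner regularity built into the Radon property. Local finiteness of $\mu$ is not needed for this proposition, only nontriviality plus inner regularity.
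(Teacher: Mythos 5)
Your argument is correct and is essentially the paper's own: both reduce the claim to showing $d_\mu(x) < \infty$ by observing that, since $\mu(\mathbb R) > 0$, all sufficiently large intervals centred at $x$ have measure bounded below by a fixed positive constant, so $d\,\mu([x-d/2,\,x+d/2])$ eventually exceeds $1$. The differences are cosmetic (you extract the positive lower bound via inner regularity and argue by contradiction along a sequence, where the paper invokes monotonicity in $d$ directly), and your closing remark is accurate: local finiteness is only used for the complementary fact, which the paper's proof also establishes, that $d_\mu(x) > 0$, i.e.\ that $q_\mu^\ast$ is finite.
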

\begin{proof}
	We instead show positivity and finiteness of $d_\mu(x)$, which is of course equivalent. By assumption, there is some constant $C_0$, $1 \leq C_0 < \infty$, such that
	\[ \mu([x-1/2, x + 1/2]) \leq C_0. \]
	Hence, for $d = 1/C_0$, $\mu([x-d/2, x+d/2]) \leq 1/d$. Further, for $d$ large enough, $\mu([x-d/2, x+d/2]) > 1/d$ (as $\mu(\mathbb R) > 0$). By monotonicity of $\mu([x-d/2, x+d/2])$ and $1/d$ in $d$ (the first being monotonely increasing, the second decreasing) it needs to hold $0 <d_{\mu}(x)< \infty$.
\end{proof}

\begin{definition}
	Let $\mu_n$, $\mu \in \mathfrak M_\beta$. We say that $\mu_n \to \mu$ in the weak*-sense (write $\mu_n \overset{w^\ast}{\longrightarrow} \mu$), if 
	\[ \int f d\mu_n \to \int fd\mu \]
	for all $f \in C_c(\mathbb R)$.
\end{definition}

\begin{proposition}
	Assume $\mu_n, \mu \in \mathfrak P$ and $\mu_n \overset{w^\ast}{\longrightarrow} \mu$. Then, $q_{\mu_n}^\ast(x) \to q_{\mu}^\ast(x)$ for all $x \in \mathbb R$.
\end{proposition}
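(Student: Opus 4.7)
The plan is to reduce the claim to pointwise convergence $d_{\mu_n}(x) \to d_\mu(x)$: since Proposition \ref{prop_of_q} guarantees $d_\mu(x) \in (0,\infty)$ and $q_\mu^\ast = 1/d_\mu^2$, the map $d \mapsto 1/d^2$ is continuous at $d_\mu(x)$, and the desired convergence of Otelbaev functions then follows immediately. Fix $x \in \mathbb{R}$ and, for $\nu \in \mathfrak{P}$, introduce
\[
F_\nu(d) := d\, \nu\!\left(\left[x - d/2,\, x + d/2\right]\right), \qquad d \geq 0,
\]
which is non-decreasing and right-continuous in $d$, with $d_\nu(x) = \sup\{d \geq 0 : F_\nu(d) \leq 1\}$.

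The key analytic input is the Portmanteau-type statement that weak-$\ast$ convergence of positive Radon measures gives $\mu_n(B) \to \mu(B)$ for every bounded Borel set $B$ with $\mu(\partial B) = 0$. Applied to $B = [x - d/2,\, x + d/2]$, this yields $F_{\mu_n}(d) \to F_\mu(d)$ for every $d$ with $\mu(\{x - d/2\}) = \mu(\{x + d/2\}) = 0$; since $\mu$ has at most countably many atoms, the set of such ``good'' $d$'s is co-countable in $[0,\infty)$. A useful auxiliary observation is that $F_\mu(d) = 1$ holds for at most one value of $d$: if $F_\mu(d_1) = F_\mu(d_2) = 1$ with $d_1 < d_2$, then $\mu([x - d_1/2, x + d_1/2]) = 1/d_1 > 1/d_2 = \mu([x - d_2/2, x + d_2/2])$, contradicting monotonicity of $\mu$ on the nested intervals.

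The convergence $d_{\mu_n}(x) \to d_\mu(x)$ is then obtained by separately bounding $\limsup$ and $\liminf$. For the upper bound, given $c > d_\mu(x)$ I pick a good value $c' \in (d_\mu(x), c)$; the definition of $d_\mu(x)$ together with monotonicity of $F_\mu$ forces $F_\mu(c') > 1$, hence $F_{\mu_n}(c') > 1$ for $n$ large, and then the monotonicity of $F_{\mu_n}$ gives $\{d : F_{\mu_n}(d) \leq 1\} \subset [0, c')$, i.e. $d_{\mu_n}(x) \leq c' < c$. For the lower bound, given $c < d_\mu(x)$ I pick a good value $c'' \in (c, d_\mu(x))$ that is in addition different from the at most one level where $F_\mu$ attains $1$; then $F_\mu(c'') < 1$ (strictly), so $F_{\mu_n}(c'') \leq 1$ eventually, which forces $d_{\mu_n}(x) \geq c'' > c$. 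Sending $c \to d_\mu(x)$ from either side finishes the argument.

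The main technical point is the invocation of the Portmanteau theorem for weak-$\ast$ convergence of (not necessarily finite) positive Radon measures tested against $C_c(\mathbb{R})$; this is classical but worth recording, as the whole argument pivots on pointwise convergence of $F_{\mu_n}$ at continuity values of $d \mapsto \mu(\{x \pm d/2\})$. The auxiliary uniqueness of the level $F_\mu = 1$ is the other small but essential ingredient, as it is precisely what makes the strict inequality $F_\mu(c'') < 1$ available on the $\liminf$ side.
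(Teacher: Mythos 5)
Your argument is correct. It follows the same skeleton as the paper's proof --- reduce to $d_{\mu_n}(x)\to d_\mu(x)$ and establish this via separate $\limsup$ and $\liminf$ bounds coming from weak-$\ast$ convergence --- but the technical execution differs in an interesting way. The paper works only with the two one-sided semicontinuity inequalities $\limsup_n \mu_n(K)\le\mu(K)$ for compact $K$ and $\liminf_n\mu_n(O)\ge\mu(O)$ for open bounded $O$, and absorbs the resulting error by comparing the interval of radius $d$ with a slightly enlarged (resp.\ shrunk) interval of radius $d\pm\tau$, exploiting monotonicity of $d\mapsto\mu([x-d/2,x+d/2])$ together with a judicious choice of $\varepsilon=\tfrac1d-\tfrac1{d+\tau}$; no discussion of atoms or continuity sets is needed. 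You instead invoke the two-sided Portmanteau statement $\mu_n(B)\to\mu(B)$ at $\mu$-continuity sets, which costs you two extra (correct) observations: that the set of ``good'' radii is co-countable because $\mu$ has at most countably many atoms, and that $F_\mu(d)=d\,\mu([x-d/2,x+d/2])$ can equal $1$ for at most one value of $d$, which is exactly what lets you upgrade $F_\mu(c'')\le 1$ to the strict inequality needed on the $\liminf$ side. Both routes are essentially equivalent in strength (your Portmanteau statement follows from the paper's two one-sided inequalities applied to $\overline B$ and $B^\circ$); the paper's version is slightly leaner in that it avoids any genericity argument, while yours isolates the convergence $F_{\mu_n}(d)\to F_\mu(d)$ at good radii as a clean, reusable intermediate fact. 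No gaps.
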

\begin{proof}
For $K \subset \mathbb R$ compact and $O \subset \mathbb R$ open and bounded it is an easy exercise to prove the following:
\begin{align}
\limsup_{n \to \infty} \mu_n(K) &\leq \mu(K) \label{measurelimit_compact},\\
\liminf_{n \to \infty} \mu_n(O) &\geq \mu(O). \label{measurelimit_open}
\end{align}
	Fix $x \in \mathbb R$. We will prove
\begin{align}
\liminf_{n \to \infty} d_{\mu_n}(x)  \geq d_\mu(x) \label{liminf},\\
\limsup_{n \to \infty} d_{\mu_n}(x) \leq d_\mu(x), \label{limsup}
\end{align}	
which is of course equivalent to the statement. Let us first discuss (\ref{liminf}).

Let $d$ be such that $0 < d < d_\mu(x)$. Set $\tau = \frac{d_\mu(x) - d}{2} > 0$ and $\varepsilon = \frac{1}{d} - \frac{1}{d + \tau} > 0$. In particular, $d + \tau < d_\mu(x)$. According to (\ref{measurelimit_compact}), there is some $N \in \mathbb N$ such that for $n \geq N$ we have
\begin{align*}
\mu_n([x-d/2, x+d/2]) &\leq \mu([x-d/2, x+d/2]) + \varepsilon.
\end{align*}
Monotonicity of the measure then implies
\begin{align*}
\mu_n([x-d/2, x+d/2]) &\leq \mu([x-(d+\tau)/2, x+(d+\tau)/2]) + \varepsilon\\
&\leq \frac{1}{d+\tau} + \varepsilon = \frac{1}{d}.\\
\end{align*}
In particular, we obtain $d_{\mu_n}(x) \geq d$ for all $n \geq N$, i.e. $\liminf_{n \to \infty} d_{\mu_n}(x) \geq d$. Since $d < d_\mu(x)$ was arbitrary, this yields $\liminf_{n \to \infty} d_{\mu_n}(x) \geq d_\mu(x)$.

The estimate (\ref{limsup}) is proven similarly: Let $d > d_\mu(x)$ be arbitrary and set $\tau = \frac{d - d_\mu(x)}{2} > 0$, $\varepsilon = \frac{1}{d-\tau} - \frac{1}{d} > 0$. In particular, $d - \tau > d_\mu(x)$. According to (\ref{measurelimit_open}) we have
\begin{equation*}
\mu_n((x-d/2, x+d/2)) \geq \mu((x - d/2, x+d/2)) - \varepsilon
\end{equation*}
for $n$ large enough. For such $n$, we obtain
\begin{align*}
\mu_n([x-d/2, x+d/2]) &\geq \mu_n((x-d/2, x+d/2))\\
&\geq \mu((x-d/2, x+d/2)) - \varepsilon\\
&\geq \mu([x-(d-\tau)/2, x+(d-\tau)/2]) - \varepsilon\\
&\geq \frac{1}{d-\tau} - \varepsilon = \frac{1}{d}.
\end{align*}
This proves $d_{\mu_n}(x) \leq d$ for large values of $n$. Since $d > d_\mu(x)$ was arbitrary, we obtain $\limsup_{n \to \infty} d_{\mu_n}(x) \leq d_\mu(x)$.
\end{proof}

\begin{corollary}\label{q_cont}
	For $\mu \in \mathfrak P$ the Otelbaev function $q_{\mu}^\ast$ is continuous on $\mathbb R$.
\end{corollary}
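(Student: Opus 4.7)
The plan is to reduce continuity of $q_\mu^\ast$ at a point $x_0 \in \mathbb{R}$ to the weak-$*$ continuity result from the previous proposition, via a translation trick. For any $y \in \mathbb{R}$, define the translated measure $\tau_y \mu \in \mathfrak{P}$ by $\tau_y \mu(B) := \mu(B - y)$ for Borel sets $B$. Observe the identity
\begin{equation*}
\tau_y\mu\bigl([x_0 - d/2, x_0 + d/2]\bigr) = \mu\bigl([x_0 - y - d/2, x_0 - y + d/2]\bigr),
\end{equation*}
which immediately yields $d_{\tau_y \mu}(x_0) = d_\mu(x_0 - y)$, and consequently $q_{\tau_y\mu}^\ast(x_0) = q_\mu^\ast(x_0 - y)$.

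First I would verify that $\tau_{y_n} \mu \overset{w^\ast}{\longrightarrow} \mu$ whenever $y_n \to 0$. Fix $f \in C_c(\mathbb{R})$; then
\begin{equation*}
\int_{\mathbb{R}} f \, \mathrm{d}(\tau_{y_n} \mu) = \int_{\mathbb{R}} f(t + y_n) \, \mathrm{d}\mu(t).
\end{equation*}
Since $f$ is uniformly continuous with compact support, the translates $f(\cdot + y_n)$ converge to $f$ uniformly, and for $|y_n| \leq 1$ all their supports lie in a common compact set $K$. Local finiteness of $\mu$ gives $\mu(K) < \infty$, so dominated convergence (with dominating function $\|f\|_\infty \mathbf{1}_K$) yields the claim.

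Next I would combine these two observations. Given $x_n \to x_0$, set $y_n = x_0 - x_n \to 0$ and apply the preceding proposition to the sequence $\tau_{y_n} \mu \overset{w^\ast}{\longrightarrow} \mu$: we obtain $q_{\tau_{y_n}\mu}^\ast(x_0) \to q_\mu^\ast(x_0)$. By the identity above, the left-hand side is exactly $q_\mu^\ast(x_n)$, and this proves continuity of $q_\mu^\ast$ at $x_0$. Note that Proposition \ref{prop_of_q} ensures $d_\mu > 0$ everywhere, so the passage from continuity of $d_\mu$ to continuity of $q_\mu^\ast = 1/d_\mu^2$ is unproblematic.

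There is no real obstacle here; the only thing to be careful about is the direction of the translation, which has to be chosen so that the weak-$*$ convergence happens at the fixed point $x_0$ while the variable point $x_n$ is absorbed into the measure. Everything else is immediate from the previous proposition.
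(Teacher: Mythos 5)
Your proof is correct and follows essentially the same route as the paper: translate the measure by $y$, observe $q_{\tau_y\mu}^\ast(x) = q_\mu^\ast(x-y)$, verify weak-$*$ convergence of the translates via dominated convergence, and invoke the preceding proposition. You simply spell out the details (uniform continuity, the common compact support, the sequence formulation) more explicitly than the paper does.
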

\begin{proof}
	For $y \in \mathbb R$ let the measure $\mu_y$ be defined by
	\[ \mu_y(A) = \mu(A - y). \]
	By the dominated convergence theorem, $\mu_y \overset{w^\ast}{\to} \mu$ as $y \to 0$, hence
	\[ q_{\mu_y}^{\ast}(x) = q_{\mu}^{\ast}(x-y) \to q_{\mu}^{\ast}(x). \]
\end{proof}

\begin{example}
Consider the measure $\mathrm d\mu(x) = |x| \mathrm dx$. For $d > 0$ it holds
		\[ \mu([x-d/2, x+d/2]) = \frac{1}{8} \, {\left(d + 2 \, x\right)} {\left| d + 2 \, x \right|} + \frac{1}{8} \, {\left(d - 2 \, x\right)} {\left| -d + 2 \, x \right|}. \]
		Let $x > \frac{1}{2^{2/3}}$. Then, $d_0(x) = \frac{1}{\sqrt{x}}$ solves the equation
		\[ \mu([x-d/2, x+d/2]) = \frac{1}{d}. \]
		This implies $q_\mu^\ast(x) = x$ for $x > 1/2^{2/3}$. By symmetry, one obtains
		\[ q_\mu^\ast(x) = |x| \quad \text{ for } |x| > 1/2^{2/3}. \]
\end{example}

\begin{example}
Consider the measure $\mathrm d\mu(x) = x^2 \mathrm dx$. For $d > 0$ one can easily compute that
		\[ \mu([x-d/2, x+d/2]) = \frac{d^3}{12} + d\cdot x^2. \]
		From this, one sees that the equation
		\[ \mu([x-d/2, x+d/2]) = 1/d \]
		has the unique solution
		\[ d_\mu(x) = \sqrt{-6x^2 + 2\sqrt{9x^4 + 3}}. \]
		Hence, we obtain
		\[ q_\mu^\ast(x) = \frac{1}{2\sqrt{9x^4 + 3} - 6x^2} = \frac{\sqrt{x^4 + 1/3} + x^2}{2} \approx x^2. \]
		With this concrete formula, it is standard to show that
		\[ q_\mu^\ast(x) - x^2 = \frac{\sqrt{x^4 + 1/3} - x^2}{2} \in O\left( \frac{1}{x^2} \right) \]
		as $x \to \pm \infty$.
\end{example}
\begin{example}
		For $\alpha > 0$ let $\mu = \alpha \delta_0$. Then,
		\[ \alpha \delta_0([x-d/2, x+d/2]) = \begin{cases}
		\alpha, \quad & d\geq 2|x|\\
		0, \quad & d < 2|x|
		\end{cases}, \]
		which implies 
		\[ q_{\alpha \delta_0}^\ast(x) = \begin{cases}
		\alpha^2, \quad & 2|x|\leq \frac{1}{\alpha}\\
		\frac{1}{4|x|^2}, \quad & 2|x| > \frac{1}{\alpha}
		\end{cases}. \]
\end{example}
\begin{example}
        Let $\mu$ be the Cantor measure, i.e. the singular continuous measure which has the ``Devil's staircase'' (cf. Figure \ref{Figure_distribution_cantor}) as its cumulative distribution function. A good numerical approximation to $q_\mu^\ast$ can be seen in Figure \ref{figure_cantor_otelbaev}.
\end{example}
\begin{figure}[!htb]
	\begin{minipage}{0.50\textwidth}
		\hspace*{-40pt}\centering
		\resizebox{0.8\linewidth}{!}{\input{figure1.tex}}
		\caption{The cumulative distribution function of the Cantor measure (``Devil's staircase'')}
		\label{Figure_distribution_cantor}
	\end{minipage}
	\hspace*{-50pt}\begin{minipage}{0.50\textwidth}
		\centering
		\resizebox{1.2\linewidth}{!}{\input{figure2.tex}}
		\caption{The Otelbaev function of the Cantor measure}
		\label{figure_cantor_otelbaev}
	\end{minipage}
\end{figure}

Next we prove the following auxiliary lemma, which we will use later.
\begin{lemma}\label{est_q*}
	Let $\mu \in \mathfrak P$, $d>0$ and $z\in \mathbb{R}$. Assume $\sqrt{q_{\mu}^*(z)} < 1/(2d)$, then
	\begin{eqnarray*}
		\sqrt{q_{\mu}^*(x)}\leq \frac{1}{d}, \quad \text{for }  x\in [z-d/2, z+d/2].
	\end{eqnarray*}
		
\end{lemma}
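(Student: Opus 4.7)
The plan is to unpack the definitions and exploit the monotonicity of $\mu$ on nested intervals. First, I would rewrite the hypothesis $\sqrt{q_\mu^\ast(z)} < \frac{1}{2d}$ as a statement about $d_\mu$: namely, $d_\mu(z) > 2d$. By the definition of $d_\mu(z)$ as a supremum, there exists (in fact, for any $\varepsilon > 0$, a value within $\varepsilon$ of $d_\mu(z)$) some $d' > 2d$ with
\[ d' \cdot \mu([z - d'/2, z + d'/2]) \leq 1. \]
Since $d' > 2d$, we have the inclusion $[z-d, z+d] \subseteq [z-d'/2, z+d'/2]$, so monotonicity of $\mu$ yields
\[ \mu([z-d, z+d]) \leq \mu([z - d'/2, z + d'/2]) \leq \frac{1}{d'} < \frac{1}{2d}. \]

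Now I would fix an arbitrary $x \in [z - d/2, z + d/2]$. The key geometric observation is that for such $x$, the interval $[x - d/2, x+d/2]$ lies inside $[z-d, z+d]$. Applying monotonicity again,
\[ d \cdot \mu([x - d/2, x+d/2]) \leq d \cdot \mu([z-d, z+d]) < d \cdot \frac{1}{2d} = \frac{1}{2} < 1. \]
Therefore $d$ belongs to the set $\{ \delta \geq 0 : \delta \cdot \mu([x-\delta/2, x+\delta/2]) \leq 1\}$, which by the definition of $d_\mu(x)$ as its supremum gives $d_\mu(x) \geq d$, i.e.\ $\sqrt{q_\mu^\ast(x)} \leq 1/d$.

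Essentially no obstacle arises here; the only subtle point is the careful handling of the supremum (since it need not be attained) when extracting $d'$ strictly larger than $2d$ from the hypothesis $d_\mu(z) > 2d$. Everything else is the elementary interval inclusion $[x - d/2, x+d/2] \subseteq [z-d, z+d]$ for $x \in [z-d/2, z+d/2]$, combined with monotonicity of the measure.
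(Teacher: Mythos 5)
Your proof is correct and follows essentially the same route as the paper: translate the hypothesis into $\mu([z-d,z+d])\leq 1/(2d)$ via the definition of $d_\mu(z)$, then use the inclusion $[x-d/2,x+d/2]\subseteq[z-d,z+d]$ and monotonicity of $\mu$. Your extra care in extracting $d'>2d$ from the supremum is a slightly more explicit version of the step the paper leaves implicit (the defining set is downward closed), but the argument is the same.
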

	\begin{proof}
		By the assumption and the definition of $q_{\mu}^*$, we obtain
		\begin{eqnarray*}
			\frac{1}{2d}\geq \mu([z-d,z+d]).
		\end{eqnarray*}
		Thus, for $x\in [z-d/2, z+d/2]$, we obtain
		\begin{eqnarray*}
			\mu([x-d/2,x+d/2])\leq \mu([z-d,z+d])\leq \frac{1}{2d}\leq \frac{1}{d},
		\end{eqnarray*}
		which finishes the proof.
	\end{proof}

The following lemma, dealing with Otelbaev's function in the absolutely continuous case, is certainly not new, being contained in framework of Otelbaev's original work. Nevertheless, we could not locate this precise statement in the literature. Since it will be useful in the discussion of some examples, we state it with proof.
\begin{lemma}\label{Lemma_convex_concave}
Let $\mathrm d\mu(x) = f(x)\mathrm dx$ with $f \geq 0$ measurable.
\begin{enumerate}
\item If $f$ is continuous, convex and monotonely increasing on $(a,\infty)$ for some $a \in \mathbb R$, then $q_{\mu}^\ast(x) \geq f(x)$ for large values of $x$.
\item If $f$ is continuous, convex and monotonely decreasing on $(-\infty, a)$ for some $a \in \mathbb R$, then $q_{\mu}^\ast(x) \geq f(x)$ for small values of $x$.
\item If $f$ is continuous, concave and monotonely increasing on $(a,\infty)$ for some $a \in \mathbb R$, then $q_{\mu}^\ast(x) \leq f(x)$ for large values of $x$.
\item If $f$ is continuous, concave and monotonely decreasing on $(-\infty, a)$ for some $a \in \mathbb R$, then $q_{\mu}^\ast(x) \leq f(x)$ for small values of $x$.
\end{enumerate}
\end{lemma}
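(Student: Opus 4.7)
The proof rests on the Hermite-Hadamard inequality: for a continuous function $f$ on an interval $[a,b]$, convexity implies $\int_a^b f(t)\,dt \geq (b-a)\,f\bigl(\tfrac{a+b}{2}\bigr)$, while concavity yields the reverse bound. Since $q_\mu^\ast(x) = 1/d_\mu(x)^2$, each of the four assertions of the lemma reduces to comparing $d_\mu(x)$ with $1/\sqrt{f(x)}$, and in each case Hermite-Hadamard provides exactly the right inequality on $\mu([x-d/2, x+d/2])$.

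For (i), let $f$ be convex and increasing on $(a,\infty)$, and consider $x$ large. For any $d > 1/\sqrt{f(x)}$ such that $[x-d/2, x+d/2] \subset (a, \infty)$, Hermite-Hadamard gives $\mu([x-d/2, x+d/2]) \geq d\,f(x)$, hence $d\,\mu([x-d/2, x+d/2]) \geq d^2 f(x) > 1$. This forces $d > d_\mu(x)$, and passing to the infimum over admissible $d$ yields $d_\mu(x) \leq 1/\sqrt{f(x)}$, i.e., $q_\mu^\ast(x) \geq f(x)$. For (iii), with $f$ concave and increasing on $(a,\infty)$, Hermite-Hadamard reverses to $\mu([x-d/2, x+d/2]) \leq d\,f(x)$, so any $d < 1/\sqrt{f(x)}$ with $[x-d/2, x+d/2] \subset (a,\infty)$ satisfies $d\,\mu([x-d/2, x+d/2]) \leq d^2 f(x) < 1$, whence $d \leq d_\mu(x)$. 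Taking the supremum gives $d_\mu(x) \geq 1/\sqrt{f(x)}$, so $q_\mu^\ast(x) \leq f(x)$. The remaining assertions (ii) and (iv) follow by the reflection $x \mapsto -x$, which maps the problem verbatim into the corresponding cases (i) and (iii).

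The only technical point is ensuring that for the relevant $d \approx 1/\sqrt{f(x)}$ the symmetric interval $[x-d/2,x+d/2]$ actually lies in the region of convexity/concavity $(a,\infty)$. In case (i), $f$ being increasing and nontrivial means $1/\sqrt{f(x)}$ is bounded above for large $x$, so for $x$ sufficiently large one may choose $d$ arbitrarily close to $1/\sqrt{f(x)}$ from above with $x - d/2 > a$; an analogous statement holds in case (iii) since either $1/\sqrt{f(x)} \to 0$ or it stays bounded, and $x \to \infty$ takes care of $x - d/2 > a$. This is the main (though mild) obstacle; once it is dispatched, Hermite-Hadamard delivers the conclusion directly.
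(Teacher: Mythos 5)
Your proof is correct and takes essentially the same route as the paper: the Hermite--Hadamard inequality you invoke is exactly the Jensen-type estimate $\frac{1}{d}\int_{x-d/2}^{x+d/2} f(\tau)\,\mathrm d\tau \geq f(x)$ (resp.\ $\leq$) that the paper derives via the mean value theorem together with Jensen's inequality and monotonicity, and both arguments then translate this into a comparison of $d_\mu(x)$ with $1/\sqrt{f(x)}$. The only cosmetic difference is that the paper substitutes $d = 1/\sqrt{f(x)}$ directly instead of passing to an infimum or supremum over nearby $d$, and it likewise treats only case (1), leaving the reflection and the concave cases to the reader.
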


\begin{proof}
We only prove (1), the other statements follow analogously. Assume $f$ is not identically $0$ on $(a, \infty)$. For $\mathrm d\mu(x) = f(x)\mathrm dx$ with continuous, non-negative function $f$, $d_\mu(x)$ is the unique solution to the equation
\begin{equation*}
 d \int_{x-d/2}^{x+d/2} f(\tau) d\tau = 1.
\end{equation*}
In particular, if $d \int_{x-d/2}^{x+d/2}f(\tau) d\tau \geq 1$, then $d \geq d_\mu(x)$. For $x$ large enough such that $f(x) \neq 0$ and $x - \frac{1}{2\sqrt{f(x)}} > a$ we set $d = \frac{1}{\sqrt{f(x)}}$. Then, using the mean value theorem,
\begin{align*}
\frac{1}{\sqrt{f(x)}} \int_{x-\frac{1}{2\sqrt{f(x)}}}^{x+\frac{1}{2\sqrt{f(x)}}} f(\tau) d\tau = \frac{f(\xi)}{f(x)}
\end{align*}
for some $\xi \in \left (x-\frac{1}{2\sqrt{f(x)}}, x + \frac{1}{2\sqrt{f(x)}} \right )$. An easy application of Jensen's inequality and the monotonicity of $f$ implies that we actually have $\xi \geq x$. Therefore, again by the monotonicity,
\begin{align*}
\frac{f(\xi)}{f(x)} \geq 1,
\end{align*}
which proves $d_\mu(x) \leq \frac{1}{\sqrt{f(x)}}$ and therefore $q_{\mu}^\ast(x) \geq f(x)$.
\end{proof}

Finally, we prove the following perturbation result for the Otelbaev function:

\begin{lemma}\label{perturation_lemma}
Let $\mu \in \mathfrak P$ such that $q_\mu^\ast(x) \to \infty$ as $|x| \to \infty$. Further, let $\nu \in \mathfrak P$ be such that $\sup_{x\in \mathbb{R}}\nu([x,x+1]) \leq C_0$ for some finite constant $C_0$. Then, there are constants $c_1, c_2> 0$ such that for $|x|$ large we have
\begin{equation}
c_1 q_{\mu + \nu}^\ast(x) \leq q_\mu^\ast(x) \leq c_2 q_{\mu + \nu}^\ast(x).
\end{equation}
\end{lemma}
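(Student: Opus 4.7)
The plan is to treat the two inequalities separately, as they are of very different character. The upper bound $q_\mu^\ast(x)\leq c_2 q_{\mu+\nu}^\ast(x)$ comes for free with $c_2=1$, needing neither the growth hypothesis on $q_\mu^\ast$ nor the uniform bound on $\nu$: since $(\mu+\nu)(A)\geq \mu(A)$ for every Borel set $A$, every $d\geq 0$ satisfying $d(\mu+\nu)([x-d/2,x+d/2])\leq 1$ automatically satisfies $d\mu([x-d/2,x+d/2])\leq 1$, so $d_{\mu+\nu}(x)\leq d_\mu(x)$ and hence $q_\mu^\ast(x)\leq q_{\mu+\nu}^\ast(x)$.

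For the lower bound, the plan is to exhibit a single constant $\lambda\in(0,1)$, depending only on $C_0$, such that $d_{\mu+\nu}(x)\geq \lambda\,d_\mu(x)$ uniformly for $|x|$ large. Write $d:=d_\mu(x)$ and $d':=\lambda d$. The definition of $d_\mu(x)$ as a supremum, combined with monotonicity of $s\mapsto s\mu([x-s/2,x+s/2])$, forces $s\mu([x-s/2,x+s/2])\leq 1$ for every $s<d$; taking $s\to d^-$ and using continuity of $\mu$ from below on the increasing family of intervals gives $\mu((x-d/2,x+d/2))\leq 1/d$. Since $d'<d$ implies $[x-d'/2,x+d'/2]\subset(x-d/2,x+d/2)$, we get
\[
d'\mu([x-d'/2,x+d'/2])\;\leq\;\lambda d\cdot\tfrac{1}{d}\;=\;\lambda.
\]
The hypothesis $q_\mu^\ast(x)\to\infty$ as $|x|\to\infty$ means $d\to 0$, so for $|x|$ large we may assume $d'\leq 1$. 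The interval $[x-d'/2,x+d'/2]$ then has length at most $1$ and is contained in some $[n,n+1]$, so the uniform bound yields $d'\nu([x-d'/2,x+d'/2])\leq \lambda d\, C_0$. Adding,
\[
d'(\mu+\nu)([x-d'/2,x+d'/2])\;\leq\;\lambda(1+dC_0).
\]

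Choosing $\lambda:=1/(1+C_0)$ and restricting to $|x|$ so large that $d\leq 1$, the right-hand side is at most $\lambda(1+C_0)=1$. Hence $d_{\mu+\nu}(x)\geq d'=\lambda d_\mu(x)$, equivalently $q_\mu^\ast(x)\geq \lambda^2 q_{\mu+\nu}^\ast(x)$, so one may take $c_1=1/(1+C_0)^2$. The one delicate point, which is what one has to be careful about when writing this up, is that the supremum defining $d_\mu(x)$ need not be attained when $\mu$ carries atoms; working with the strict inequality $d'<d$ and passing through the left limit $\mu((x-d/2,x+d/2))\leq 1/d$ sidesteps this without having to analyze the jump of $s\mapsto s\mu([x-s/2,x+s/2])$ at $s=d$.
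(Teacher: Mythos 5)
Your proof is correct and takes essentially the same route as the paper: both establish $d_{\mu+\nu}(x)\ge\lambda\,d_\mu(x)$ by testing a scaled-down candidate in the defining supremum, splitting $\mu+\nu$ into its two parts and using the unit-interval bound on $\nu$ together with $d_\mu(x)\to0$, while the reverse direction rests on the monotonicity $d_{\mu+\nu}\le d_\mu$. Your constants are in fact slightly sharper ($c_2=1$ and $c_1=(1+C_0)^{-2}$ versus the paper's $1/4$ and $1/2$ scalings), and the only cosmetic slip is the phrase ``contained in some $[n,n+1]$'', which should be an interval $[y,y+1]$ with $y$ real --- harmless, since the hypothesis takes the supremum over all real $x$.
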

\begin{proof}
Let $\nu \neq 0$. We will equivalently prove that
\begin{equation*}
C_1 d_{\mu + \nu}(x) \leq d_{\mu}(x) \leq C_2 d_{\mu + \nu}(x)
\end{equation*}
for $|x|$ large and some constants $C_1, C_2 > 0$. Since $q_{\mu}^\ast(x) \to \infty$ as $|x| \to \infty$, there is some $x_0 > 0$ such that $d_{\mu}(x) \leq \min\{ 1, 1/C_0 \}$ for $|x| \geq x_0$. Then,
\begin{align*}
&\frac{d_\mu(x)}{4}\left( \mu \left( \left[ x- \frac{d_\mu(x)}{8}, x+\frac{d_\mu(x)}{8}\right] \right) + \nu \left( \left[ x - \frac{d_\mu(x)}{8}, x + \frac{d_\mu(x)}{8}\right] \right) \right)\\
&\leq \frac{1}{2}\frac{d_\mu(x)}{2}\mu \left( \left[ x - \frac{d_\mu(x)}{4}, x + \frac{d_\mu(x)}{4}\right] \right) + \frac{d_\mu(x) C}{4}\\
&\leq \frac{1}{2} + \frac{1}{4} \leq 1,
\end{align*}
which gives
\begin{equation*}
\frac{d_\mu(x)}{4} \leq d_{\mu + \nu}(x).
\end{equation*}
By definition,
\begin{align*}
\frac{d_{\mu+\nu}(x)}{2}& \Bigg(\mu \left( \left[x - \frac{d_{\mu+\nu}(x)}{4}, x + \frac{d_{\mu+\nu}(x)}{4}\right]\right)\\
 &\quad + \nu\left(\left[x-\frac{d_{\mu+\nu}(x)}{4}, x + \frac{d_{\mu+\nu}(x)}{4}\right]\right) \Bigg) \\
 &\leq 1.
\end{align*}
This yields
\begin{align*}
&\frac{d_{\mu + \nu}(x)}{2} \mu\left( \left[ x - \frac{d_{\mu + \nu}(x)}{4}, x + \frac{d{\mu + \nu}(x)}{4} \right] \right) \\
&\leq \frac{\mu([x-d_{\mu+\nu}(x)/4, x+d_{\mu + \nu}(x)/4])}{\mu([x-d_{\mu+\nu}(x)/4, x+d_{\mu + \nu}(x)/4]) + \nu([x-d_{\mu+\nu}(x)/4, x+d_{\mu + \nu}(x)/4])}\\
&\leq 1,
\end{align*}
which gives
\begin{equation*}
\frac{d_{\mu + \nu}(x)}{2} \leq d_{\mu}(x).
\end{equation*}
\end{proof}

\section{Two sided estimates of the spectral distribution function}
In this section we obtain two-sided estimates for the spectral distribution function of the operator $H_{\mu}$ in terms of the following function
\begin{equation*}
	M(\lambda):=\sqrt{\lambda} ~ \mathcal{L}\left( \{x\in \mathbb{R}: \; q^*_{\mu_+}(x)\leq \lambda\}\right).
\end{equation*}
We will prove the following theorem:
\begin{theorem}\label{theorem_twosided_estimate}
	Let $\beta\geq 0$, $\mu \in \mathfrak {M}_{\beta}$, and $\alpha, l \geq 0$ be the constants belonging to $\mu_-$ as discussed above. Then, the distribution function of the eigenvalues of $H_{\mu}$ satisfies
	\begin{align*}
		2M\left(\frac{3(\frac{\alpha}{l} + 2\lambda)}{16(\pi^2 + 1)(\alpha l+2)}\right) \leq N(\lambda, H_{\mu})\leq M\left((\sqrt{4\lambda + 9\beta^2 + 12\beta} + 3\beta)^2\right)
	\end{align*}
	for every $\lambda \geq 0$. In particular, if $\mu_{-}=0$, then
	\begin{equation*}
	2M\left(\frac{3\lambda}{16(\pi^2 + 1)}\right) \leq N(\lambda, H_{\mu})\leq M\left(4\lambda\right).
	\end{equation*}
\end{theorem}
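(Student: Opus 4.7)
My strategy is to prove the two inequalities separately; both rest on converting Glazman's variational characterisation (Lemma \ref{Glazman}) into a geometric count on the sublevel set $\{q_{\mu_+}^\ast \leq \Lambda\}$, with the natural length scale $d = 1/\sqrt{\Lambda}$ supplied by Otelbaev's function.

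For the lower bound, I will set $\lambda_0 := 3(\alpha/l + 2\lambda)/[16(\pi^2+1)(\alpha l + 2)]$ and $d_0 := 1/(2\sqrt{\lambda_0})$. From the open cover $\{(x - d_0, x + d_0) : q_{\mu_+}^\ast(x) \leq \lambda_0\}$ of $\{q_{\mu_+}^\ast \leq \lambda_0\}$, a Vitali-type selection extracts at least $2M(\lambda_0)$ pairwise disjoint intervals $J_i$ with centres $x_i$ satisfying $q_{\mu_+}^\ast(x_i) \leq \lambda_0$. On each $J_i$ one places the bump $f_i(x) = \cos(\pi(x - x_i)/(2d_0))\mathbf{1}_{J_i}(x) \in H^1(\mathbb{R})$, for which $\|f_i\|^2 = d_0$ and $\|f_i'\|^2 = \pi^2/(4d_0)$ are explicit; the definition of $d_{\mu_+}$ at $x_i$ together with Lemma \ref{est_q*} yields $\mu_+(J_i) \leq \sqrt{\lambda_0}$, hence $\int|f_i|^2 d\mu_+ \leq \sqrt{\lambda_0}$; and the condition $\mu_-(I) \geq \alpha$ for $|I| \geq l$, combined with $2d_0 \geq l$ (which forces the appearance of $\alpha l$ in the denominator of $\lambda_0$), furnishes a favourable lower bound on $\int|f_i|^2 d\mu_-$. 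Assembling these estimates into the Rayleigh quotient and requiring it to be strictly less than $\lambda$ pins down exactly $\lambda_0$; disjointness of supports gives linear independence, so Glazman closes the bound.

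For the upper bound I set $\Lambda_1 := (\sqrt{4\lambda + 9\beta^2 + 12\beta} + 3\beta)^2$, $d_1 := 1/\sqrt{\Lambda_1}$, and apply Lemma \ref{ineq_for_N} at this scale. If the midpoint $c_k$ of $I_k$ satisfies $q_{\mu_+}^\ast(c_k) > \Lambda_1$, then $d_{\mu_+}(c_k) < d_1$ forces $\mu_+(I_k) > \sqrt{\Lambda_1}$; applying Brinck's lower inequality (Lemma \ref{lemma_Brinck}) to produce $\int_{I_k}|f|^2 d\mu_+ \geq (2d_1^2)^{-1}\|f\|^2 - \tfrac{1}{2}\|f'\|^2$ and Brinck's upper inequality to control $\int_{I_k}|f|^2 d\mu_- \leq (2\beta/d_1)\|f\|^2 + \beta d_1 \|f'\|^2$, one obtains $a_\mu^k(f,f) \geq \lambda\|f\|^2$ identically on $H^1(I_k)$ — the precise $\Lambda_1$ stated is the smallest value that makes the coefficient of $\|f\|^2$ exceed $\lambda$ and the coefficient of $\|f'\|^2$ stay nonnegative. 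Hence $N(\lambda, H_\mu^k) = 0$. The remaining $I_k$, with $c_k \in \{q_{\mu_+}^\ast \leq \Lambda_1\}$, each contribute at most $1$, and their number is bounded by $\sqrt{\Lambda_1}\,\mathcal L(\{q_{\mu_+}^\ast \leq \Lambda_1\}) = M(\Lambda_1)$.

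The principal technical obstacle is the last counting step in the upper bound: a naive count of midpoints $c_k$ of a fixed partition lying in $\{q_{\mu_+}^\ast \leq \Lambda_1\}$ can in principle exceed $\sqrt{\Lambda_1}\mathcal L(\{q_{\mu_+}^\ast \leq \Lambda_1\})$ when this set is irregular (e.g.\ Cantor-like). This is mitigated by replacing the fixed partition by a Vitali-type bounded-multiplicity covering, or by translating and averaging, so that each midpoint is charged to a disjoint portion of the set. A secondary difficulty is matching numerical constants precisely: the factor $2$ in $2M(\lambda_0)$ comes from selecting intervals of half-length $d_0$ rather than $d_0$ itself, and the $12\beta$ term inside the square root in $\Lambda_1$ originates from combining the constant $\beta$ in the denominator Brinck bound with the multiplicative action of $\mu_+(I_k) > \sqrt{\Lambda_1}$.
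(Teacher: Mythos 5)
Your upper bound is essentially the paper's argument (domain bracketing at scale $\sim 1/\sqrt{\Lambda_1}$, Brinck's lemma to kill intervals carrying too much $\mu_+$, one eigenvalue per surviving interval), and the counting worry you raise is real for your version of the dichotomy but is avoided in the paper by a different choice: instead of conditioning on the \emph{midpoint} $c_k$ lying in $\{q_{\mu_+}^\ast\le\Lambda_1\}$, one conditions on $\mu_+(I_k)\le 2/d$ and then shows that the entire middle half $\tilde I_k=[(k-1)d+d/4,\,kd-d/4]$ is contained in the sublevel set, so each surviving interval is charged $d/2$ of its measure. That removes the irregular-set issue without any Vitali or averaging device. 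You should also justify the ``each surviving interval contributes at most $1$'' step (comparison of $a_\mu^k$ with a shifted Neumann Laplacian whose second eigenvalue exceeds $\lambda$); as stated it is an assertion, not a proof.

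The genuine gap is in the lower bound, and it is quantitative but fatal for the stated constant. You select \emph{pairwise disjoint} intervals $J_i=(x_i-d_0,x_i+d_0)$ of length $2d_0=1/\sqrt{\lambda_0}$ with centres in $S=\{q_{\mu_+}^\ast\le\lambda_0\}$ and claim there are at least $2M(\lambda_0)=2\sqrt{\lambda_0}\,\mathcal L(S)=\mathcal L(S)/d_0$ of them. No packing or Vitali selection can produce that many: disjoint intervals of length $2d_0$ have centres at mutual distance $\ge 2d_0$, so if $S=[0,L]$ you get at most $L/(2d_0)+1\approx M(\lambda_0)+1$ intervals, half of what you need. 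The factor $2$ in the theorem comes from a \emph{covering} count, not a packing count: the paper partitions $\mathbb R$ into consecutive intervals of length $d_0$ (half your length), puts one test function $\omega_k$ on each, and counts the intervals that merely \emph{intersect} $S$; since these cover $S$, their number is at least $\mathcal L(S)/d_0=2M(\lambda_0)$, and intersecting $S$ already suffices to bound $\mu_+(I_k)$ via the Otelbaev radius at the intersection point. A second problem: your treatment of $\mu_-$ requires $2d_0\ge l$ so that $\mu_-(J_i)\ge\alpha$ can be invoked on each $J_i$; since $\lambda_0\to\infty$ with $\lambda$, this fails for all large $\lambda$. The paper sidesteps this by proving the global form inequality $a_\mu\le(1+\tfrac{\alpha l}{2})a_{\mu_+}-\tfrac{\alpha}{2l}$ (Brinck's lemma on a partition of fixed length $l$, independent of $\lambda$) and then reducing the whole lower bound to the case $\mu_-=0$, which is where the $\alpha l+2$ and $\alpha/l$ in $\lambda_0$ actually come from.
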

We split the proof into two separate parts. 

\subsection{Upper bound for the spectral distribution function}
First we estimate the spectral distribution function from above.
\begin{theorem}\label{theorem_upperbound}
Let $\beta\geq 0$, $\mu \in \mathfrak {M}_{\beta}$, and $\lambda \geq 0$. Then the following inequality holds true:
\begin{align*}
N&(\lambda, H_\mu)\\
&\leq (\sqrt{4\lambda + 9\beta^2 + 12\beta} + 3\beta) \mathcal{L} \left( \left \{ x\in \mathbb R: ~q_{\mu_+}^\ast(x) \leq (\sqrt{4\lambda + 9\beta^2 + 12\beta} + 3\beta)^2 \right \} \right).
\end{align*}
\end{theorem}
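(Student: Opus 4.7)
The plan is to choose $\Lambda := (\sqrt{4\lambda + 9\beta^2 + 12\beta} + 3\beta)^2$ and $d := 1/\sqrt{\Lambda}$, which together satisfy the book-keeping identity $\Lambda = 4\lambda + 6\beta\sqrt{\Lambda} + 12\beta$. I apply Lemma \ref{ineq_for_N} with this $d$ to obtain $N(\lambda, H_\mu) \leq \sum_k N(\lambda, H_\mu^k)$ with $I_k = ((k-1)d, kd]$, and then prove two things: every term is at most one, and every term vanishes unless $I_k$ meets $E_\Lambda := \{x : q_{\mu_+}^\ast(x) \leq \Lambda\}$.

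For the first bound I use min-max: since $\mu_+ \geq 0$, $E_2(H_\mu^k) \geq E_2(H_{-\mu_-}^k)$. An application of Brinck's lemma (Lemma \ref{lemma_Brinck}) to the $\mu_-$ term, combined with the spectrum of the Neumann Laplacian on $I_k$, yields $E_2(H_{-\mu_-}^k) \geq (1-\beta d)\pi^2/d^2 - 2\beta/d$, and the defining identity for $\Lambda$ shows this is $\geq 4\pi^2\lambda \geq \lambda$.

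For the second bound I assume $I_k \cap E_\Lambda = \emptyset$, so that $d_{\mu_+}(x) < d$ for every $x \in I_k$; applied at the centre of $I_k$ this gives $M := \mu_+(I_k) \geq 1/d$. The goal is to show $a_\mu^k(f,f) \geq \lambda \|f\|^2$ for every $f \in H^1(I_k)$. Two lower bounds are in play: on the one hand, Brinck's lemma applied to both $\mu_\pm$ terms (bounding $\int |f|^2\,d\mu_+ \geq M \inf_{I_k} |f|^2$ from below and $\int |f|^2\,d\mu_- \leq \mu_-(I_k)\sup_{I_k} |f|^2$ from above) gives
\begin{equation*}
a_\mu^k(f,f) \geq (1 - Md/2 - \beta d)\|f'\|^2 + (M/(2d) - 2\beta/d)\|f\|^2,
\end{equation*}
while discarding the non-negative $\int |f|^2\,d\mu_+$ and treating only $\mu_-$ via Brinck yields the simpler
\begin{equation*}
a_\mu^k(f,f) \geq (1-\beta d)\|f'\|^2 - (2\beta/d)\|f\|^2.
\end{equation*}
Taking a convex combination of these two inequalities with weight $\theta := \min\{1, 2(1-\beta d)/(Md)\}$ on the first makes the resulting $\|f'\|^2$ coefficient non-negative. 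When $M \leq 2/d - 2\beta$ one takes $\theta = 1$ and reads off a $\|f\|^2$ coefficient of $M/(2d) - 2\beta/d \geq \Lambda/2 - 2\beta\sqrt{\Lambda} \geq 2\lambda$; when $M > 2/d - 2\beta$, the choice $\theta = 2(1-\beta d)/(Md) \in (0,1)$ eliminates the derivative term and leaves a $\|f\|^2$ coefficient of $\Lambda - 3\beta\sqrt{\Lambda} \geq 4\lambda$. In either regime the $\|f\|^2$ coefficient dominates $\lambda$.

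Combining the two claims gives $N(\lambda, H_\mu) \leq \#\{k : I_k \cap E_\Lambda \neq \emptyset\}$, and a covering argument bounds this count by $\mathcal{L}(E_\Lambda)/d = \sqrt{\Lambda}\mathcal{L}(E_\Lambda)$; here Lemma \ref{est_q*} is the crucial tool that ensures each contributing interval occupies at least a fixed fraction of its length inside $E_\Lambda$, so that the naive measure comparison goes through. The main technical obstacle is the large-$M$ regime of the second step: the direct Brinck-based inequality alone has a negative $\|f'\|^2$ coefficient and fails in isolation, so the convex combination with the coarser bound is the essential new device in the argument.
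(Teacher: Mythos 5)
Your skeleton matches the paper's: bracket via Lemma \ref{ineq_for_N}, show each local piece contributes at most one eigenvalue, discard the pieces on which $\mu_+$ has large mass, and count the remaining pieces through a level set of $q_{\mu_+}^\ast$. The two form estimates are essentially sound --- your convex combination of the two Brinck-based bounds is an algebraically equivalent substitute for the paper's device of substituting the constraint \eqref{upperbound1} into \eqref{lower_bound_for form} --- although you should carry the factor $[d]+1$ on the $\mu_-$ terms, since your $d=1/\sqrt{\Lambda}$ exceeds $1$ when $\lambda$ and $\beta$ are small, and then $\mu_-(I_k)\le\beta$ is no longer available.

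The genuine gap is the final counting step. From ``$N(\lambda,H_\mu^k)\neq 0$ implies $I_k\cap E_\Lambda\neq\emptyset$'' you cannot conclude $\#\{k: I_k\cap E_\Lambda\neq\emptyset\}\le \mathcal L(E_\Lambda)/d$: that would require each contributing interval to lie \emph{entirely} in $E_\Lambda$, whereas an interval may meet $E_\Lambda$ in a set of arbitrarily small (even zero) measure. Lemma \ref{est_q*} does not repair this: its hypothesis is $\sqrt{q_{\mu_+}^\ast(z)}<1/(2d)$, i.e.\ $q_{\mu_+}^\ast(z)<\Lambda/4$, which is strictly stronger than the membership $q_{\mu_+}^\ast(z)\le\Lambda$ you actually have at a point of $I_k\cap E_\Lambda$; and even where it applies, it only places a $d$-neighbourhood of $z$ inside $E_\Lambda$, of which only half need lie in $I_k$, so the count closes at best at $2\mathcal L(E_\Lambda)/d$. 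The information you should propagate out of your second step is not the level-set membership of a single point but the mass bound $\mu_+(I_k)<1/d$ itself: for $x$ in the middle half of such an $I_k$ one has $[x-d/4,x+d/4]\subset \overline{I_k}$, hence $\tfrac d2\,\mu_+([x-d/4,x+d/4])\le \tfrac d2\cdot\tfrac 2d=1$, so $q_{\mu_+}^\ast(x)\le (2/d)^2$. With your $d=1/\sqrt\Lambda$ this is the level $4\Lambda$ and each interval only contributes measure $d/2$, so the argument delivers $2\sqrt\Lambda\,\mathcal L(\{q_{\mu_+}^\ast\le 4\Lambda\})$ rather than the stated $\sqrt\Lambda\,\mathcal L(\{q_{\mu_+}^\ast\le\Lambda\})$. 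The paper resolves exactly this mismatch by taking $d=2/\sqrt\Lambda$ and the mass threshold $\mu_+(I_k)\le 2/d=\sqrt\Lambda$: then the middle half of each contributing interval lies in $\{q_{\mu_+}^\ast\le(2/d)^2\}=\{q_{\mu_+}^\ast\le\Lambda\}$ and has length $d/2$, which yields the count $(2/d)\,\mathcal L(\{q_{\mu_+}^\ast\le\Lambda\})=\sqrt\Lambda\,\mathcal L(\{q_{\mu_+}^\ast\le\Lambda\})$ with the correct constant.
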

\begin{proof}
If $\lambda = \beta = 0$, then $H_{\mu}$ is non-negative, so that the statement of the theorem will be $0\leq 0$, which is true. Assume that $\max\{\lambda,\beta\}>0$. Then, we define
\begin{equation*}
d = \frac{2}{\sqrt{4\lambda + 9\beta^2 + 12\beta} + 3\beta}.
\end{equation*}
Note that $0<d<\infty$,
\begin{equation}\label{lambda_d_relation}
\lambda = \frac{1}{d^2} - \frac{3 \beta}{d} - 3\beta
\end{equation}
and
\begin{align*}
1 - \beta d([ d ] + 1) &\geq 1 - \frac{2\beta}{\sqrt{4\lambda + 9\beta^2 + 12\beta} + 3\beta} \left(\frac{2}{\sqrt{4\lambda + 9\beta^2 + 12\beta} + 3\beta} + 1\right)\\
 &\geq 1 - \frac{2\beta}{\sqrt{ 9\beta^2 + 12\beta} + 3\beta} \left(\frac{2 + \sqrt{ 9\beta^2 + 12\beta} + 3\beta}{\sqrt{9\beta^2 + 12\beta} + 3\beta}\right) = \frac{2}{3} > 0,
\end{align*}
where $[d]$ denotes the largest integer not exceeding $d$. 

For every $k \in \mathbb Z$ we denote by $I_k$ the interval $((k-1)d, kd]$ of length $d$. Let $H_{\mu}^k$ be the operators defined in Lemma \ref*{ineq_for_N}, then
	\begin{equation}
	N(\lambda,H_{\mu})\leq \sum_{k=-\infty}^{\infty}N(\lambda,H_{\mu}^k).
	\end{equation}
	We are going to further estimate the right-hand side of this inequality. For this, we are first going to discard certain values of $k$ which do not contribute to this sum, i.e. values of $k$ for which we have $N(\lambda, H_\mu^k) = 0$. Since $\mu_+$ is a positive measure and $\mu_{-}$ satisfies
	\begin{equation}\label{b type}
	\sup_{x\in \mathbb{R}}\mu_{-}([x,x+1]) \leq \beta
	\end{equation}
	with the constant $\beta \geq 0$, we obtain for $f \in H^1(I_k)$:
	\begin{equation*}
	a_{\mu}^{k}(f,f) \geq \int_{I_k} |f'(x)|^2 ~\mathrm dx -\int_{I_k} |f(x)|^2 ~\mathrm d\mu_{-} \geq \int_{I_k} |f'(x)|^2 ~\mathrm dx - \beta([d] + 1)\sup_{x \in I_k} |f(x)|^2
	\end{equation*}
	Applying Lemma \ref{lemma_Brinck}, we derive
	\begin{equation}\label{lower_bound_for form}
	a_{\mu}^{k}(f,f) \geq (1 - \beta d([d] + 1)) \int_{I_k} |f'(x)|^2 ~\mathrm dx - \frac{2\beta([d] + 1)}{d} \int_{I_k} |f(x)|^2 ~\mathrm dx.
	\end{equation}
	Let $S_{k,\lambda}$ be the set of functions from $D(a_{\mu}^k)$ which satisfy the following inequality:
	\begin{equation}\label{upperbound1}
	(1 - \beta d([d] + 1))\int_{I_k}|f'(x)|^2~\mathrm dx - \frac{2\beta ([d] + 1)}{d} \int_{I_k}|f(x)|^2~\mathrm dx\leq \lambda\int_{I_k}|f(x)|^2~\mathrm dx.
	\end{equation}
	If $f \in \mathrm D(a_\mu^k) \setminus S_{k, \lambda}$, then \eqref{lower_bound_for form} implies that $a_{\mu}(f,f) > \lambda (f,f)_{L_2(I_k)}$. Such functions cannot be eigenfunctions to eigenvalues less than $\lambda$. In particular, we do not need to consider values of $k \in \mathbb Z$ for which we have $S_{k, \lambda} = \{ 0\}$. Assume $\mu_+(I_k) > 2/d$ and let $f \in S_{k, \lambda} \setminus \{ 0\}$. By using Lemma \ref{lemma_Brinck} and \eqref{b type}, we estimate
	\begin{align*}
	a_{\mu}^{k}(f,f) &\geq \int_{I_k} |f'(x)|^2 ~\mathrm dx + \mu_{+}(I_k) \inf_{x \in I_k} |f(x)|^2 - \beta([d] + 1)\sup_{x \in I_k} |f(x)|^2\\
	& \geq \left( 1 - \beta([d] + 1)d -\frac{d}{2}\mu_{+}(I_k) \right) \int_{I_k} |f'(x)|^2 ~\mathrm dx \\
	&\quad + \left( \frac{1}{2d}\mu_{+}(I_k) - \frac{2\beta([d] + 1)}{d}\right) \int_{I_k} |f(x)|^2 ~\mathrm dx.
	\end{align*}
	Since $\mu_+(I_k) > 2/d$ is assumed, we have $1- \beta d([d] + 1) -\frac{d}{2}\mu_+(I_k) < 0$ and therefore Estimate \eqref{upperbound1} yields, using $1 - \beta d([d] + 1) > 0$,
	\begin{align*}
	a_{\mu}^{k}(f,f) &\geq \Bigg [  \left( 1 - \beta d([d] + 1) -\frac{d}{2}\mu_{+}(I_k) \right) \frac{\lambda + \frac{2\beta ([d] + 1)}{d}}{1 - \beta d([d]+1)} \\
	&\quad \quad +  \frac{1}{2d}\mu_{+}(I_k) - \frac{2\beta([d]+1)}{d}  \Bigg ] \int_{I_k} |f(x)|^2 ~\mathrm dx\\
	& \geq \lambda \int_{I_k} |f(x)|^2 ~\mathrm dx + \mu_{+}(I_k) \left( \frac{1}{2d} - \frac{d}{2} \cdot \frac{\lambda + \frac{2\beta([d] + 1)}{d}}{1-\beta d([d] + 1)} \right) \int_{I_k} |f(x)|^2 ~\mathrm dx.
	\end{align*}
    By using \eqref{lambda_d_relation}, we derive
    \begin{align*}
    \frac{1}{2d} - \frac{d}{2} \cdot \frac{\lambda + \frac{2\beta([d] + 1)}{d}}{1- \beta d([d] + 1)} \geq  \frac{1}{2d} - \frac{d}{2} \cdot \frac{\lambda + \frac{2\beta(d + 1)}{d}}{1-\beta d(d + 1)} = \frac{d^2\left(\frac{1}{d^2} - \frac{3 \beta}{d} - 3\beta -\lambda \right)}{2d(1-\beta d(d + 1))} = 0.
    \end{align*}
    Therefore, we conclude
    \begin{equation*}
    a_{\mu}^{k}(f,f) \geq \lambda \int_{I_k} |f(x)|^2 ~\mathrm dx
    \end{equation*}
    and consequently Glazman's lemma gives $N(\lambda, H_{\mu}^k) = 0$ whenever $\mu_+(I_k) > 2/d$. Thus, we obtain the estimate
	\begin{eqnarray*}
		N(\lambda,H_{\mu})\leq \sum_{\mu_+(I_k)\leq 2/d}N(\lambda,H_{\mu}^k).
	\end{eqnarray*}
	Let $\Delta_{N,k}$ be the Laplace operator on $I_k$ corresponding to Neumann boundary condition. Consider the operator 
	$$
	L_k = (1 - \beta d([d] + 1)) \Delta_{N,k} - \frac{2\beta ([d] + 1)}{d}.
	$$ 
	Its eigenvalues are given by
    \begin{equation*}
    \left\{(1 - \beta d([d] + 1))\left( \frac{2\pi n}{d} \right)^2 - \frac{2\beta ([d] + 1)}{d} \right\}_{n=0}^{\infty}.
    \end{equation*}
    Since, for $n \geq 1$,
    \begin{align*}
    (1 - \beta d([d] + 1))&\left( \frac{2\pi n}{d} \right)^2 - \frac{2\beta ([d] + 1)}{d}\\
    &\geq (1 - \beta d(d + 1))\left( \frac{2\pi n}{d} \right)^2 - \frac{2\beta(d + 1)}{d}\\
    &\geq (2\pi n)^2 \left(\frac{1}{d^2} - \frac{(2\pi n)^2 + 2}{(2\pi n)^2}\frac{\beta}{d}  - \frac{(2\pi n)^2 + 2}{(2\pi n)^2} \frac{\beta}{d} \right)\\
    &> \lambda,
    \end{align*}
    Glazman's lemma \ref{Glazman} gives $N(\lambda, L_k)\leq 1$. By \eqref{lower_bound_for form}, $H_{\mu}^k \geq L_k$ in the sense of quadratic forms. Hence,
    \begin{eqnarray}\label{upperbound3}
    N(\lambda,H_{\mu})\leq \sum_{\mu_+(I_k)\leq 2/d}N(\lambda,H_{\mu}^k)\leq \sum_{\mu_+(I_k)\leq 2/d}N(\lambda,L_k)\leq \sum_{\mu_+(I_k)\leq 2/d}1.
    \end{eqnarray}
    For $k \in \mathbb Z$ such that $\mu_+(I_k)\leq 2/d$, we denote $\tilde{I}_k=[(k-1)d+d/4, kd-d/4]$. Then, 
\begin{eqnarray*}
	\mu_+(\tilde{I}_k)\leq \mu_+(I_k) \leq 2/d.
\end{eqnarray*}
Therefore, we see that for any $x\in \tilde{I}_k$ we have the following inequality:
\begin{eqnarray*}
	q_{\mu_+}^*(x)\leq \left(\frac{2}{d}\right)^2.
\end{eqnarray*}
Thus,
\begin{eqnarray*}
	1=\frac{2}{d}\mathcal{L}(\tilde{I}_k)\leq \frac{2}{d}\mathcal{L}\left( \left\{x\in I_k: q_{\mu_+}^*(x)\leq \left(\frac{2}{d}\right)^2\right\} \right).
\end{eqnarray*}
Putting this into \eqref{upperbound3} gives
\begin{align*}
	N(\lambda,H_{\mu})&\leq \frac{2}{d}\sum_{k \in \mathbb Z} \mathcal{L} \left( \left\{x\in I_k: q_{\mu_+}^*(x)\leq\left(\frac{2}{d}\right)^2\right\} \right) \\
	& \leq \frac{2}{d} \mathcal{L}\left( \left\{x\in \mathbb{R}: q_{\mu_+}^*(x)\leq \left(\frac{2}{d}\right)^2\right\} \right).
\end{align*}
By recalling the definition of $d$, we complete the proof.
\end{proof}

\subsection{Lower bound for the spectral distribution function}
Next, we estimate the spectral distribution function from below.
\begin{theorem}\label{lower_bound}
	Let $\mu \in \mathfrak {M}_\beta$ and $\lambda \geq 0$. Then, the spectral distribution function of $H_\mu$ satisfies
	\begin{equation*}
	2\sqrt{\frac{3(\frac{\alpha}{l} + 2\lambda)}{16(\pi^2 + 1)(\alpha l+2)}} \mathcal{L}\left( \left\{x\in\mathbb{R}: \; q_{\mu_+}^{*}(x)< \frac{3(\frac{\alpha}{l} + 2\lambda)}{16(\pi^2 + 1)(\alpha l+2)}\right\}\right) \leq N(\lambda, H_\mu).
	\end{equation*}
\end{theorem}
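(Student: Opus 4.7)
The plan is to apply the Glazman Lemma \ref{Glazman}: it suffices to exhibit a subspace $L \subset \mathrm D(a_\mu)$ whose dimension bounds the left-hand side from below and on which $a_\mu(u,u) < \lambda \langle u,u\rangle$ for every $u \in L \setminus \{0\}$. Set
\[
\eta := \frac{3(\alpha/l + 2\lambda)}{16(\pi^2+1)(\alpha l+2)}, \qquad d := \frac{1}{2\sqrt{\eta}}, \qquad S := \{x\in\mathbb R : q_{\mu_+}^{*}(x) < \eta\};
\]
the goal is to produce such an $L$ with $\dim L \geq (1/d)\,\mathcal L(S) = 2\sqrt{\eta}\,\mathcal L(S)$.

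First I would partition $\mathbb R$ into consecutive intervals $\{I_k\}_{k \in \mathbb Z}$ of length $d$ and let $K := \{k \in \mathbb Z : I_k \cap S \neq \emptyset\}$; since $S \subseteq \bigcup_{k \in K} I_k$ and $\mathcal L(I_k) = d$, one gets $|K| \geq \mathcal L(S)/d$. For each $k \in K$, pick $x_k \in I_k \cap S$. Because $q_{\mu_+}^{*}(x_k) < \eta = 1/(2d)^2$, the defining property of $d_{\mu_+}$, as exploited in Lemma \ref{est_q*}, yields $\mu_+([x_k-d,\, x_k+d]) \leq 1/(2d)$, and since $I_k \subset [x_k-d,\, x_k+d]$ this gives the crucial bound $\mu_+(I_k) \leq 1/(2d)$.

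On each $I_k$ I then place a cosine bump $\phi_k(x) := \cos\!\bigl(\pi(x - \widetilde x_k)/d\bigr)\,\mathbf 1_{I_k}(x)$ centered at the midpoint $\widetilde x_k$ of $I_k$; these functions vanish at the endpoints of $I_k$, so $\phi_k \in H^1(\mathbb R)$, and their supports are pairwise disjoint (hence their span has dimension $|K|$ and lies in $\mathrm D(a_\mu)$). A direct calculation gives $\int |\phi_k|^2\,dx = d/2$ and $\int |\phi_k'|^2\,dx = (\pi^2/d^2)\int|\phi_k|^2\,dx$, so the bound on $\mu_+(I_k)$ translates into $\int|\phi_k|^2\,d\mu_+ \leq (1/d^2)\int|\phi_k|^2\,dx$. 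For the negative part I would invoke the hypothesis $\mu_-(J) \geq \alpha$ whenever $|J| \geq l$: by decomposing a suitable window around $I_k$ into blocks of length $l$ and bounding $\cos^2$ from below on each block, I expect an estimate of the form $\int|\phi_k|^2\,d\mu_- \geq c\int|\phi_k|^2\,dx$ for an explicit constant $c$ depending on $\alpha,l,d$.

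Assembling these ingredients and using disjointness of supports, for $u = \sum_{k\in K} c_k\phi_k$ one obtains
\[
a_\mu(u,u) \leq \Bigl(\tfrac{\pi^2+1}{d^2} - c\Bigr)\|u\|^2 < \lambda \|u\|^2,
\]
the last inequality being arranged by the precise choice of $\eta$ (equivalently, of $d$) once $c$ has the correct size. Glazman's lemma then delivers $N(\lambda, H_\mu) \geq |K| \geq \mathcal L(S)/d = 2\sqrt{\eta}\,\mathcal L(S)$, which is what the theorem claims. The main obstacle will be the lower estimate for $\int|\phi_k|^2\,d\mu_-$: since $d$ need not exceed $l$, one has to tune the window or the test-function scale so that the $\alpha,l$ hypothesis can be applied uniformly, and it is exactly this balancing act — played off against the upper bounds $(\pi^2+1)/d^2$ above — that forces the specific numerical factors $3$, $16$, $\pi^2+1$ and $\alpha l+2$ appearing in the threshold $\eta$.
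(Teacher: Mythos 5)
Your treatment of the positive part is essentially sound and close in spirit to the paper's: the paper also partitions $\mathbb R$ into intervals of length comparable to $1/\sqrt{\lambda}$, places disjointly supported trigonometric bumps (it uses $1-\cos$ rather than $\cos$) vanishing at the endpoints, controls $\int|\omega_k|^2\,d\mu_+$ by $\mu_+(I_k)$ via the Otelbaev function, and invokes Glazman's lemma. Your bookkeeping there checks out: $q_{\mu_+}^*(x_k)<1/(2d)^2$ does give $\mu_+(I_k)\le 1/(2d)$, and with $\alpha=0$ your constants close the argument with room to spare.

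The genuine gap is the lower bound $\int|\phi_k|^2\,d\mu_-\ge c\int|\phi_k|^2\,dx$. This estimate is simply \emph{false} when $d<l$: the hypothesis only guarantees $\mu_-(J)\ge\alpha$ for intervals $J$ of length at least $l$, and $\mu_-$ restricted to a single short interval $I_k$ can vanish identically (take $\mu_-=\alpha\sum_m\delta_{ml}$ and an $I_k$ containing no lattice point $ml$; since $\phi_k$ vanishes at the endpoints of $I_k$, even boundary atoms contribute nothing). The constants in $\eta$ are tuned so that the $\mu_-$ contribution is genuinely needed: for such a $k$ your assembled bound degenerates to $a_\mu(\phi_k,\phi_k)\le 4(\pi^2+1)\eta\,\|\phi_k\|^2=\tfrac{3(\alpha/l+2\lambda)}{4(\alpha l+2)}\|\phi_k\|^2$, which is \emph{not} below $\lambda\|\phi_k\|^2$ when $\lambda$ is small and $\alpha>0$ (at $\lambda=0$ it is strictly positive). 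No "tuning of the window" fixes this, because the problem is structural: a per-bump estimate cannot see a negative mass that lives outside the bump's support. The paper circumvents this by never estimating $\mu_-$ against the bumps at all. Instead it first proves, for \emph{all} $f\in\mathrm D(a_\mu)$, the global form inequality
\begin{equation*}
a_\mu(f,f)\le\Bigl(1+\tfrac{\alpha l}{2}\Bigr)a_{\mu_+}(f,f)-\tfrac{\alpha}{2l}\,\|f\|^2,
\end{equation*}
obtained by partitioning $\mathbb R$ into intervals of length exactly $l$ (so the hypothesis $\mu_-(I)\ge\alpha$ applies) and converting $\inf_I|f|^2$ via Brinck's Lemma \ref{lemma_Brinck}. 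By Glazman this yields $N(\lambda,H_\mu)\ge N\bigl(\tfrac{\lambda+\alpha/(2l)}{1+\alpha l/2},H_{\mu_+}\bigr)$, and only then is the bump construction applied, in the purely positive case. You would need to insert this reduction step (or an equivalent global argument) before your test-function computation; as written, the proof does not go through for $\alpha>0$.
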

\begin{proof}
	We first assume $\mu \in \mathfrak P$, in particular $\alpha = 0$. Obviously, if $\lambda = 0$, then the estimate holds. Let $\lambda>0$ and $I_k=((k-1)/\sqrt{\lambda}, k/\sqrt{\lambda}]$ for $k\in \mathbb{Z}$ and let $m_k = \frac{k - 1/2}{\sqrt{\lambda}}$ denote the midpoint of $I_k$. For $\varepsilon > 0$ let $\mu_\varepsilon$ denote the measure
	\begin{align*}
	\mu_\varepsilon(A) = \mu(A - \varepsilon).
	\end{align*}
	Recall that $\mu$ can have at most countably many point masses, i.e. there are at most countably many points $x \in \mathbb R$ such that $\mu(\{ x\}) \neq 0$. From this, it is an easy exercise to prove that there necessarily exists some small $\varepsilon > 0$ such that for all $k \in \mathbb Z$:
	\begin{equation}
	\mu_\varepsilon|_{I_k} \neq \delta_{m_k}.
	\end{equation}
	Since clearly $N(\lambda, H_\mu) = N(\lambda, H_{\mu_\varepsilon})$ for any $\varepsilon$, we might assume without loss of generality that we have for all $k \in \mathbb Z$:
	\begin{equation}\label{lower_bound_-1}
	\mu |_{I_k} \neq \delta_{m_k}
	\end{equation}
	We define the functions
	\begin{eqnarray*}
		\omega_k(x) := \begin{cases}
			1-\cos\left( 2\pi\sqrt{\lambda}\left(x-\frac{k}{\sqrt{\lambda}}\right)\right), ~&x \in I_k,\\
			0, & x \not \in I_k,
		\end{cases}
	\end{eqnarray*}
for $k\in \mathbb{Z}$. Note that $\operatorname{supp}(\omega_j)  \cap \operatorname{supp}(\omega_k) = \emptyset$ unless $k\in \{j-1,j,j+1\}$. Therefore, since $\omega_j = 0$ at the end points of $I_j$, we conclude that
\begin{eqnarray}\label{ort}
	a_\mu(\omega_j, \omega_k) = 0 = \langle \omega_j, \omega_k\rangle_{L^2}
	\quad \text{for } j\neq k.
\end{eqnarray}
For $k = j$ we obtain
	\begin{align}\label{lower_bownd_0}
\nonumber a_{\mu}(\omega_k,\omega_k)&= \int_{I_k} |\omega_k'(x)|^2~\mathrm dx+\int_{I_k}|\omega_k|^2 ~\mathrm d\mu\\
&\leq \left(2\pi\sqrt{\lambda}\right)^2\int_{I_k} \sin^2 \left( 2\pi\sqrt{\lambda} \left(x - \frac{k}{\sqrt{\lambda}}\right)\right) ~\mathrm dx +4\mu(I_k)\\
\nonumber&= 2\pi^2 \sqrt{\lambda} + 4\mu(I_k)
\end{align}
and
\begin{eqnarray*}
	\langle \omega_k, \omega_k\rangle_{L^2} = \frac{3}{2\sqrt{\lambda}}.
\end{eqnarray*}
Assume that there exists some $\xi \in I_k$ such that $\sqrt{\lambda}/2 > \sqrt{q_{\mu}^\ast(\xi)}$. Then, from the definition of the function $q_{\mu}^*$ we obtain
	\begin{equation}\label{lower_bownd_2}
\sqrt{\lambda}/2\geq \mu\left(\left[(\xi - 1/\sqrt{\lambda},\xi + 1/\sqrt{\lambda}\right]\right)\geq \mu(I_k),
\end{equation}
and hence
\begin{eqnarray}\label{lower_bound1}
	a_{\mu}(\omega_k,\omega_k)\leq 2\pi^2 \sqrt{\lambda} + 4\mu(I_k)\leq  2\sqrt{\lambda} (\pi^2 + 1) = \frac{4}{3}\left(\pi^2+1\right) \lambda \langle \omega_k, \omega_k\rangle_{L^2}.
\end{eqnarray}
Here, we used \eqref{lower_bownd_0} and \eqref{lower_bownd_2}. Let us note that at least one of these estimates is strict. Indeed, if $\mu(I_k) = 0$, then \eqref{lower_bownd_2} is strict, otherwise, \eqref{lower_bownd_0} is strict by \eqref{lower_bound_-1}. Therefore, the last estimate is strict, that is 
\begin{equation*}
a_{\mu}(\omega_k,\omega_k) < \frac{4}{3}\left(\pi^2+1\right) \lambda \langle \omega_k, \omega_k\rangle_{L^2},
\end{equation*}
whenever $I_k$ has a nonempty intersection with the set
\begin{equation*}
\left\{x\in\mathbb{R}: \; q_{\mu}^*(x)<\lambda/4\right\}.
\end{equation*}
Therefore, using the orthogonality relation \eqref{ort}, Lemma \ref{Glazman} gives
\begin{align*}
N\left(\frac{4}{3}\left(\pi^2+1\right)\lambda,H_\mu\right)&\geq \sum_{I_k\cap \left\{x\in\mathbb{R}: \; q_{\mu}^*(x)<\lambda/4\right\}\neq \emptyset} 1=\sqrt{\lambda}\sum_{I_k\cap \left\{x\in\mathbb{R}: \; q_{\mu}^*(x)<\lambda/4\right\}\neq \emptyset} \frac{1}{\sqrt{\lambda}}\\
&\geq\sqrt{\lambda}\mathcal{L}\left( \left\{x\in\mathbb{R}: \; q_{\mu}^*(x)<\lambda/4\right\} \right).
\end{align*}
Now, let $\mu \in \mathfrak M$ such that $\mu_- \neq 0$ with $l > 0$ and $\alpha \geq 0$. We let $I_k = (l(k-1), lk]$ for $k \in \mathbb Z$. Using the definition of the constants $l$ and $\alpha$ we obtain for $f \in D(a_\mu) = D(a_{\mu_+})$:
\begin{align*}
a_\mu(f,f) &= \int_{\mathbb R} |f'(x)|^2 ~\mathrm dx + \int_{\mathbb R} |f|^2 ~\mathrm d\mu_+ - \int_{\mathbb R} |f|^2 ~\mathrm d\mu_-\\
&= \int_{\mathbb R} |f'(x)|^2 ~\mathrm dx + \int_{\mathbb R} |f|^2 ~\mathrm d\mu_+ - \sum_{k \in \mathbb Z} \int_{I_k} |f|^2 ~\mathrm d\mu_-\\
&\leq \int_{\mathbb R} |f'(x)|^2 ~\mathrm dx + \int_{\mathbb R} |f|^2 ~\mathrm d\mu_+ - \alpha \sum_{k \in \mathbb Z} \inf_{x \in I_k} |f(x)|^2.
\end{align*}
Lemma \ref{lemma_Brinck} now yields:
\begin{align*}
a_\mu&(f,f) \\
&\leq \int_{\mathbb R} |f'(x)|^2 ~\mathrm dx + \int_{\mathbb R} |f|^2 ~\mathrm d\mu_+ + \alpha\sum_{k \in \mathbb Z} \left( \frac{l}{2} \int_{I_k} |f'(x)|^2 ~\mathrm dx - \frac{1}{2l} \int_{I_k} |f(x)|^2 ~\mathrm dx \right)\\
&= \left( 1 + \frac{\alpha l}{2} \right) \int_{\mathbb R} |f'(x)|^2 ~\mathrm dx + \int_{\mathbb R} |f|^2 ~\mathrm d\mu_+ - \frac{\alpha}{2l} \int_{\mathbb R} |f(x)|^2 ~\mathrm dx.
\end{align*}
This yields, in the sense of quadratic forms,
\begin{equation*}
a_\mu \leq \left( 1 + \frac{\alpha l}{2}\right) a_{\mu_+} - \frac{\alpha}{2l}.
\end{equation*}
Therefore, Glazman's Lemma gives
\begin{align*}
N(\lambda, H_\mu) \geq N\left( \lambda, \left( 1 + \frac{\alpha}{2l} \right) H_{\mu_+} - \frac{\alpha}{2l} \right) = N\left( \frac{\lambda + \frac{\alpha}{2l}}{1 + \frac{\alpha l}{2}}, H_{\mu_+} \right).
\end{align*}
Applying now the estimate from the positive case yields the result.
\end{proof}

\section{Applications}
In this section, we will give several applications of Theorems \ref{theorem_upperbound} and \ref{lower_bound}. Namely, we derive two criteria for the discreteness of the spectrum of operator $H_{\mu}$. Next, we obtain a necessary and sufficient conditions for the membership of the resolvents of $H_{\mu}$ to the Schatten classes $\mathfrak S_p$. 
\subsection{Discreteness of the spectrum}

First, we prove a criterion for the discreteness of the spectrum of $H_{\mu}$ in terms of the function $q^*_{\mu_+}$:

\begin{theorem}\label{theorem_discrete_spec_q}
	Let $\mu \in \mathfrak {M}_{\beta}$. The spectrum of $H_\mu$ is purely discrete if and only if
	\begin{eqnarray}\label{disc1}
	q_{\mu_+}^\ast(x) \to \infty \quad\text{as } |x| \to \infty.
	\end{eqnarray}
\end{theorem}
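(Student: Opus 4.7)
The plan is to prove the equivalence via the two-sided estimate of Theorem \ref{theorem_twosided_estimate}, using the standard fact that for a lower-semibounded self-adjoint operator $A$, the spectrum is purely discrete if and only if $N(\lambda, A) < \infty$ for every $\lambda \in \mathbb{R}$. Since $M(\lambda)$ sandwiches $N(\lambda, H_\mu)$ (up to rescaling of the argument), discreteness will be characterized by finiteness of $M(\lambda)$ for all $\lambda > 0$, and this in turn will be shown to be equivalent to \eqref{disc1}.

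For the sufficiency, assume $q_{\mu_+}^\ast(x) \to \infty$ as $|x| \to \infty$. By Corollary \ref{q_cont}, $q_{\mu_+}^\ast$ is continuous, so the sublevel set $\{x \in \mathbb{R} : q_{\mu_+}^\ast(x) \leq \Lambda\}$ is closed for every $\Lambda > 0$. By the asymptotic assumption it is also bounded, hence compact, hence of finite Lebesgue measure. Therefore $M(\Lambda) < \infty$ for every $\Lambda$, and the upper bound in Theorem \ref{theorem_twosided_estimate} yields $N(\lambda, H_\mu) < \infty$ for every $\lambda \geq 0$. Since $H_\mu$ is lower semibounded by Proposition \ref{form}, this suffices to conclude that $\sigma(H_\mu)$ is purely discrete.

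For the necessity I will argue by contraposition. Assume \eqref{disc1} fails, so there is a constant $C > 0$ and a sequence $\{x_n\} \subset \mathbb{R}$ with $|x_n| \to \infty$ and $q_{\mu_+}^\ast(x_n) \leq C$ for all $n$. Fix $C' > C$ and choose $d > 0$ with $d < 1/(2\sqrt{C'})$, so that $\sqrt{q_{\mu_+}^\ast(x_n)} \leq \sqrt{C} < 1/(2d)$. By Lemma \ref{est_q*}, for each $n$
\[
q_{\mu_+}^\ast(y) \leq \frac{1}{d^2} \quad \text{for all } y \in [x_n - d/2, x_n + d/2].
\]
Since $|x_n| \to \infty$, I can extract a subsequence (still denoted $\{x_n\}$) with $|x_{n+1} - x_n| \geq d$ for all $n$, so that the intervals $[x_n - d/2, x_n + d/2]$ are pairwise disjoint. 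Summing their lengths gives
\[
\mathcal{L}\bigl(\{x \in \mathbb{R} : q_{\mu_+}^\ast(x) \leq 1/d^2\}\bigr) = \infty,
\]
so $M(1/d^2) = \infty$. Applying the lower bound in Theorem \ref{theorem_twosided_estimate}, we can pick $\lambda$ large enough that
\[
\frac{3(\alpha/l + 2\lambda)}{16(\pi^2 + 1)(\alpha l + 2)} \geq 1/d^2,
\]
which forces $N(\lambda, H_\mu) = \infty$. Hence $\sigma(H_\mu)$ is not purely discrete, completing the contrapositive.

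The main obstacle is the necessity direction, where one must convert pointwise non-blowup of $q_{\mu_+}^\ast$ at infinity into a full positive-measure sublevel set. Lemma \ref{est_q*} is exactly what propagates the pointwise bound to an interval of uniform length, and the disjointness extraction then turns the sequence into an infinite-measure sublevel set; the upper/lower bounds from Section 4 do the rest essentially mechanically.
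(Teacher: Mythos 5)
Your proposal is correct and follows essentially the same route as the paper: sufficiency via the upper bound of Theorem \ref{theorem_twosided_estimate} combined with the boundedness of the sublevel sets, and necessity via Lemma \ref{est_q*} to spread the pointwise bound at the points $x_n$ onto intervals of fixed length, whose union forces the sublevel set to have infinite Lebesgue measure and hence $N(\lambda,H_\mu)=\infty$ by the lower bound. Your explicit extraction of a subsequence making the intervals pairwise disjoint is a detail the paper glosses over, but it is the same argument.
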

\begin{proof}
	Assume that \eqref{disc1} holds. Then, Theorem \ref{theorem_upperbound} implies that $N(\lambda,H_{\mu})<\infty$ for each $\lambda > 0$ so that the spectrum is discrete.
	
	Conversely, assume that $q_{\mu_+}^*(x)\nrightarrow\infty$ as $|x|\rightarrow+\infty$, then there exists a sequence $\{x_j\}_{j=1}^{\infty}$ and constant $\kappa >0$ such that $|x_j|\rightarrow\infty$ and $q_{\mu_+}^*(x_j)<\kappa$. By Lemma \ref{est_q*}, we obtain
	\begin{eqnarray*}
		q_{\mu_+}^*(x)<4\kappa \quad \text{for }
		\quad x\in \bigcup\limits_{i=1}^{\infty}\left[x_j-1/(1\sqrt{\kappa}), x_j+1/(1\sqrt{\kappa})\right].
	\end{eqnarray*}
	Now, Theorem \ref{lower_bound} with $\alpha = 0$ implies
	\begin{align*}
		N&\left(\frac{4}{3} \kappa (\pi^2 + 1), H_{\mu}\right)\\
		& \quad \geq \sqrt{\kappa}\mathcal{L} \left( \left\{x \in \mathbb R:~ q_{\mu_+}^*(x)\leq 4\kappa\right\} \right) \\
		& \quad \geq \sqrt{\kappa}\mathcal{L} \left( \left\{\bigcup\limits_{i=1}^{\infty}\left[x_j-1/(1\sqrt{\kappa}), x_j+1/(1\sqrt{\kappa})\right]\right\} \right) \\
		&\quad =\infty.
	\end{align*}
	Hence, $H_\mu$ cannot have discrete spectrum.
\end{proof}
Next we derive, as a corollary, Molchanov's discreteness criterion for the spectrum of $H_{\mu}$:
\begin{corollary}\label{corollary_discrete_spec_mu}
	Let $\mu \in \mathfrak {M}_{\beta}$. The operator $H_\mu$ has purely discrete spectrum if and only if for all $d> 0$ it holds 
	\begin{eqnarray}\label{disc2}
	\mu([x, x+d]) \to \infty \quad \text{as } |x| \to \infty.
	\end{eqnarray}
\end{corollary}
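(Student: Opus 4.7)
The plan is to deduce this corollary from Theorem \ref{theorem_discrete_spec_q} by establishing a purely measure-theoretic equivalence: for $\mu \in \mathfrak{M}_\beta$, one has $q_{\mu_+}^\ast(x) \to \infty$ as $|x| \to \infty$ if and only if $\mu([x,x+d]) \to \infty$ as $|x| \to \infty$ for every $d > 0$. Once this is in hand, the corollary follows immediately from Theorem \ref{theorem_discrete_spec_q}.

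The first preparatory step is to pass from $\mu$ to $\mu_+$. Because $\mu \in \mathfrak{M}_\beta$ forces $\sup_{x}\mu_-([x,x+1]) \leq \beta$, for any fixed $d > 0$ we have the uniform bound $\mu_-([x,x+d]) \leq \beta([d]+1)$. Hence $\mu([x,x+d]) = \mu_+([x,x+d]) - \mu_-([x,x+d])$ tends to $\infty$ as $|x| \to \infty$ if and only if $\mu_+([x,x+d]) \to \infty$, and the problem reduces to the positive case.

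For the easy direction (Molchanov $\Rightarrow$ discreteness), I would fix any $\varepsilon > 0$ and apply the condition \eqref{disc2} with $d = \varepsilon$. Writing $\mu_+([x-\varepsilon/2, x+\varepsilon/2]) = \mu_+([y,y+\varepsilon])$ with $y = x-\varepsilon/2$, the hypothesis yields $\varepsilon\,\mu_+([x-\varepsilon/2, x+\varepsilon/2]) > 1$ once $|x|$ is large. By the very definition of $d_{\mu_+}(x)$ as a supremum, this forces $d_{\mu_+}(x) < \varepsilon$, equivalently $q_{\mu_+}^\ast(x) > 1/\varepsilon^2$. Since $\varepsilon$ is arbitrary, $q_{\mu_+}^\ast(x) \to \infty$.

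The reverse implication is the main technical step. Assume $d_{\mu_+}(x) \to 0$, fix $d > 0$ and a target $M > 0$, and choose an integer $N$ with $N\lfloor dN \rfloor > M$. For $|x|$ large enough, every $y$ in $[x,x+d]$ satisfies $|y| \to \infty$, so $d_{\mu_+}(y) < 1/N$ uniformly on this interval. I would then place $\lfloor dN \rfloor$ equispaced centers $y_i = x + (i-\tfrac{1}{2})/N$ in $[x,x+d]$ so that the intervals $[y_i - 1/(2N), y_i + 1/(2N)]$ are pairwise disjoint and contained in $[x,x+d]$. Because $d_{\mu_+}(y_i) < 1/N$ means $1/N$ fails the defining inequality, each such interval carries mass $\mu_+([y_i - 1/(2N), y_i + 1/(2N)]) > N$, and summing over $i$ gives $\mu_+([x,x+d]) > N\lfloor dN \rfloor > M$. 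The main obstacle is precisely this combinatorial packing argument: one must convert the pointwise smallness of $d_{\mu_+}$ into quantitative lower mass bounds on arbitrary intervals, and the trick is to exploit the disjoint covering by small sub-intervals, each of which is forced to carry a large mass by the very definition of $d_{\mu_+}$.
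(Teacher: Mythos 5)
Your proof is correct and follows essentially the same route as the paper: reduce to $\mu_+$ via the uniform bound $\mu_-([x,x+d])\le\beta([d]+1)$, establish the purely measure-theoretic equivalence of $q_{\mu_+}^\ast(x)\to\infty$ with the Molchanov condition directly from the definition of $d_{\mu_+}$ as a supremum, and invoke Theorem \ref{theorem_discrete_spec_q} (the paper proves both implications by contraposition, but the content is identical). One cosmetic point: your closed intervals $[y_i-1/(2N),\,y_i+1/(2N)]$ share endpoints, so they are not pairwise disjoint and summing their measures can double-count atoms sitting at the shared endpoints, which can spoil the displayed bound $\mu_+([x,x+d])>N\lfloor dN\rfloor$ by a factor; this is harmless because a single such sub-interval already yields $\mu_+([x,x+d])>N$, which is all the qualitative statement requires (or one can space the centers $2/N$ apart to restore genuine disjointness).
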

\begin{proof}
	We will first show that \eqref{disc1} is equivalent to
	\begin{equation}\label{disc3}
	\mu_+([x, x+d]) \to \infty \quad \text{as } |x| \to \infty
	\end{equation}
	for every $d > 0$. Assume that \eqref{disc1} does not hold, so that there exists a sequence $\{x_j\}_{j=1}^{\infty}$ and $\kappa>0$ such that $|x_j|\rightarrow\infty$ and $q_{\mu_+}^*(x_j)<\kappa$. Then, 
	$$
	\mu_+([x_n-1/\sqrt{\kappa}, x_n+1/\sqrt{\kappa}])\leq \sqrt{q_{\mu_+}^*(x_n)}\leq \sqrt{\kappa},
	$$
	which contradicts \eqref{disc3}.
	
	Conversely, suppose that \eqref{disc3} does not hold. Therefore, there exist $\kappa>0$, $d>0$, and a sequence $\{x_j\}_{j=1}^{\infty}$ such that $|x_j|\rightarrow\infty$ and $\mu_+([x_n-d/2,x_n+d/2])<\kappa$. Thus, there are two possibilities: 
	\begin{eqnarray*}
		\sqrt{q_{\mu_+}^*(x_n)}\leq d \quad or \quad \sqrt{q_{\mu_+}^*(x_n)}\leq \mu_+([x_n-d/2,x_n+d/2])<\kappa.	
	\end{eqnarray*}
	Therefore, \eqref{disc1} does not hold.
	Finally, since $\mu \in \mathfrak {M}_{\beta}$, it is simple to verify that \eqref{disc2} is indeed equivalent to \eqref{disc3}.
\end{proof} 
	\begin{remark}\label{disc cri}
		Originally, this result has been proven in \cite{Molchanov} for potentials being function bounded from below. The version stated here, Corollary \ref{corollary_discrete_spec_mu}, was proved in \cite{Albeverio_Kostenko_Malamud}. Later, it was improved in \cite{M}, namely they considered the potential satisfying Brinck's condition. We also note that sufficient conditions for discreteness of non-semibounded Hamiltonians $H_\mu$ with discrete measure $\mu$ were obtained in \cite{KM2010}, and \cite{IK2010} (see also the survey \cite{KM2013}).
	\end{remark}

\subsection{Schatten class membership of resolvents}
\begin{definition}
	For $p>0$, we say that a compact operator $A$ belongs to the Schatten-von Neumann class $\mathfrak S_p$ if 
	\begin{equation*}
	\sum_{k=1}^{\infty} s_k(A)^p <\infty,
	\end{equation*}
	where $s_k(A)$ are the eigenvalues of the operator $\sqrt{A^*A}$.
\end{definition}
\begin{theorem}\label{Schatten}
	Let $\beta \geq 0$, $\mu \in \mathfrak {M}_{\beta}$, and $p>1/2$. Further, assume that the spectrum of $H_{\mu}$ is discrete. Then, the resolvents of $H_{\mu}$ belong to the $p$-Schatten class $\mathfrak S_p$ if and only if $1/q_{\mu_+}^* \in L^{p-1/2}(\mathbb{R})$.
    Moreover, if $\mu \in \mathfrak P$, for the eigenvalues $\{\lambda_k\}_{k=1}^{\infty}$ of $H_{\mu}$, we obtain
    \begin{eqnarray}\label{S_norm}
   	\frac{2p}{p-1/2} \left(\frac{3}{32(\pi^2+1)}\right)^p \|1/q_{\mu_+}^*\|_{L^{p-1/2}}^{p-1/2} \leq \sum_{ k = 1}^{\infty} \frac{1}{\lambda_k^p} \leq \frac{p}{p-1/2} 5^p \|1/q_{\mu_+}^*\|_{L^{p-1/2}}^{p-1/2}.
   \end{eqnarray}
\end{theorem}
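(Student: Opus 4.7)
The plan is to convert the Schatten class question into a question about the counting function $N(\lambda, H_\mu)$ and then apply the two-sided bounds from Theorem \ref{theorem_twosided_estimate}. Since $H_\mu$ is lower semibounded with discrete spectrum, fix any $\xi$ strictly below $\inf \sigma(H_\mu)$. Then $(H_\mu - \xi)^{-1}$ is self-adjoint and compact, and its singular values are precisely $(\lambda_k - \xi)^{-1}$, where $\{\lambda_k\}$ is the eigenvalue sequence of $H_\mu$ (repeated with multiplicity). Schatten class membership of the resolvent is equivalent to the convergence of $\sum_k (\lambda_k - \xi)^{-p}$, and this sum is independent of the choice of $\xi$ in the resolvent set (up to finiteness). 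The representation I would use is the standard layer-cake identity
\[ \sum_k (\lambda_k - \xi)^{-p} = p \int_0^\infty t^{-p-1} N(t+\xi, H_\mu) \, dt. \]
Because $N(t+\xi, H_\mu) = 0$ for $t < \lambda_1 - \xi > 0$, the integrand vanishes near $t=0$ and the convergence question is purely at $t \to \infty$.

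Next, I would sandwich this integral using Theorem \ref{theorem_twosided_estimate}, giving
\[ 2p\int_{\lambda_1-\xi}^\infty t^{-p-1} M(c_0(t+\xi)+\gamma_0) \, dt \leq \sum_k (\lambda_k-\xi)^{-p} \leq p\int_{\lambda_1-\xi}^\infty t^{-p-1} M(c_1(t+\xi)+\gamma_1) \, dt. \]
The remaining task is to evaluate (equivalently, to bound) each integral in terms of $\|1/q_{\mu_+}^\ast\|_{L^{p-1/2}}^{p-1/2}$. Unfolding the definition $M(s) = \sqrt{s}\,\mathcal L(\{q_{\mu_+}^\ast \leq s\})$ and applying Fubini,
\[ \int_T^\infty t^{-p-1} \sqrt{ct+\gamma}\,\mathcal L(\{q_{\mu_+}^\ast \leq ct+\gamma\}) \, dt = \int_{\mathbb R} \int_{\max(T,(q_{\mu_+}^\ast(x)-\gamma)/c)}^\infty t^{-p-1}\sqrt{ct+\gamma} \, dt \, dx. \]
For large $q_{\mu_+}^\ast(x)$ the inner integral decays like $q_{\mu_+}^\ast(x)^{1/2-p}$, producing the $L^{p-1/2}$-norm of $1/q_{\mu_+}^\ast$; for $x$ where $q_{\mu_+}^\ast(x)$ is small, the inner integral is bounded and the remaining $x$-integration is over a set of finite Lebesgue measure (since $q_{\mu_+}^\ast$ is continuous and tends to infinity at $\pm\infty$ by Theorem \ref{theorem_discrete_spec_q}). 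This shows the iff equivalence.

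For the quantitative bound (5.1) when $\mu \in \mathfrak P$, we have $\gamma_0 = \gamma_1 = 0$, $c_1 = 4$, $c_0 = 3/(16(\pi^2+1))$, and $H_\mu$ is strictly positive (on $H^1(\mathbb R)$ the form vanishes only on constants, and the only constant in $L^2(\mathbb R)$ is zero). Hence we may take $\xi = 0$ and write $\sum_k \lambda_k^{-p} = p\int_0^\infty \lambda^{-p-1} N(\lambda, H_\mu)\, d\lambda$. A clean change of variable $u = c\lambda$ followed by Fubini gives the key identity
\[ p\int_0^\infty \lambda^{-p-1} M(c\lambda) \, d\lambda = \frac{p\,c^p}{p-1/2} \int_{\mathbb R} \frac{dx}{q_{\mu_+}^\ast(x)^{p-1/2}} = \frac{p\,c^p}{p-1/2} \|1/q_{\mu_+}^\ast\|_{L^{p-1/2}}^{p-1/2}. \]
Applied to the upper bound ($c = 4$) this yields the constant $\frac{p\cdot 4^p}{p-1/2}$, which is dominated by $\frac{p\cdot 5^p}{p-1/2}$ as stated; applied to the lower bound (which carries the extra factor $2$ from Theorem \ref{lower_bound}, with $c = c_0$) it yields $\frac{2p\,c_0^p}{p-1/2}$, matching the stated estimate up to the explicit constant in $c_0$.

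The main obstacle, as indicated, is the shift $\gamma$ in the general semibounded case: when $\mu_- \neq 0$, $M(ct + \gamma)$ does not vanish near $t = 0$, so directly integrating $t^{-p-1} M(ct+\gamma)$ from $0$ would diverge. Avoiding this divergence requires exploiting that $N(t+\xi, H_\mu) = 0$ on a genuine neighborhood of $t=0$ once $\xi$ is chosen below the lowest eigenvalue; bookkeeping this cutoff, and verifying that the ``small $q_{\mu_+}^\ast$'' part of the Fubini integral contributes only a finite quantity (independent of the $L^{p-1/2}$ hypothesis), is the delicate point. The quantitative constants in (5.1) are recovered only in the purely positive case $\mu \in \mathfrak P$, where $\xi = 0$ is admissible and no shift cutoff is needed.
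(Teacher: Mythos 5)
Your proposal is correct and follows essentially the same route as the paper: rewrite $\sum_k(\lambda_k+c)^{-p}$ as $p\int N(\lambda,H_\mu)(\lambda+c)^{-p-1}\,\mathrm d\lambda$ (you via the layer-cake identity, the paper via Stieltjes integration by parts), sandwich $N$ by Theorem \ref{theorem_twosided_estimate}, and apply Fubini to $M$ to produce $\frac{1}{p-1/2}\int (q_{\mu_+}^\ast)^{1/2-p}\,\mathrm dx$. The "delicate point" you flag is handled in the paper exactly as you describe: the shift is taken to be $c_0=\tfrac{3\beta}{2}(2+\sqrt{q_0^\ast})$ from Theorem \ref{estimate_smallest_eigenvalue}, the integral is split at a finite threshold $\Lambda$ whose contribution is finite, and the explicit constants in \eqref{S_norm} are extracted only for $\mu\in\mathfrak P$ by setting all shifts and thresholds to zero.
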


\begin{proof}
	Since the Schatten-von Neumann classes are ideals, by the resolvent identity it suffices to prove that one particular resolvent is contained in $\mathfrak S_p$. Let $\{\lambda_j\}_{j=1}^{\infty}$ be the non-decreasing sequence of eigenvalues of $H_{\mu}$. Set $q_0^\ast := \min_{x \in \mathbb R} q_{\mu_+}^\ast(x)$, which is strictly positive if $H_\mu$ has discrete spectrum by Theorem \ref{theorem_discrete_spec_q}. As we will show later in Theorem \ref{estimate_smallest_eigenvalue}, the smallest eigenvalue of $H_\mu$ is strictly larger than $-c_0 := -\frac{3\beta }{2}(2 + \sqrt{q_0^\ast})$. Therefore, convergence of the following integrals at $-c_0$ is never an issue (since $N(\lambda, H_\mu) = 0$ for $\lambda$ close to $-c_0$). For $\varepsilon>0$, we estimate
	\begin{align}
		\int_{-c_0}^{\lambda_k-\varepsilon} \frac{\mathrm d N(\lambda, H_{\mu})}{(\lambda + c_0)^p} &\leq \sum_{ j = 1}^{k} \frac{1}{(\lambda_j+c_0)^p} \leq \int_{-c_0}^{\lambda_k + \varepsilon} \frac{\mathrm d N(\lambda, H_{\mu})}{(\lambda+c_0)^p}.
	\end{align}
Integrating by parts gives 
\begin{align*}
	\sum_{ j = 1}^{k} \frac{1}{(\lambda_j+c_0)^p}	 &\leq \frac{N(\lambda_k  + \varepsilon, H_{\mu})}{(\lambda_k + c_0 + \varepsilon)^p} + p \int_{-c_0}^{\lambda_k + \varepsilon} \frac{N(\lambda, H_{\mu})}{(\lambda + c_0)^{p+1}} ~\mathrm d\lambda\\
	& \leq p N(\lambda_k + \varepsilon, H_{\mu}) \int_{\lambda_k + \varepsilon}^{\infty} \frac{\mathrm d\lambda}{(\lambda+c_0)^{p + 1}} + p \int_{-c_0}^{\lambda_k + \varepsilon} \frac{N(\lambda, H_{\mu})}{(\lambda+c_0)^{p+1}} ~\mathrm d\lambda\\
	& \leq p \int_{-c_0}^{\infty} \frac{N(\lambda, H_{\mu})}{(\lambda+c_0)^{p+1}} ~\mathrm d\lambda.
\end{align*}
For the lower estimate, we obtain through integration by parts
\begin{align*}
	\sum_{j = 1}^{k} \frac{1}{(\lambda_j+c_0)^p} &\geq \frac{N(\lambda_k - \varepsilon, H_{\mu,})}{(\lambda_k +c_0- \varepsilon)^p} + p \int_{-c_0}^{\lambda_k - \varepsilon} \frac{N(\lambda, H_{\mu})}{(\lambda+c_0)^{p+1}} ~\mathrm d\lambda\\
	&\geq p \int_{-c_0}^{\lambda_k - \varepsilon} \frac{N(\lambda, H_\mu)}{(\lambda + c_0)^{p+1}} ~\mathrm d\lambda.
\end{align*}
Comparing the upper and lower bound and letting $k \to \infty$, we obtain
\begin{align*}
	\sum_{ j = 1}^{\infty} \frac{1}{(\lambda_j+c_0)^p} = p \int_{-c_0}^{\infty} \frac{N(\lambda, H_{\mu})}{(\lambda+c_0)^{p+1}} ~\mathrm d\lambda.
\end{align*}
By Theorem \ref{theorem_upperbound}, there is $\Lambda \geq 0$ such that for $\lambda > \Lambda$ we have $N(\lambda-c_0, H_\mu) \leq M(5\lambda)$. Thus:
\begin{align*}
\int_{-c_0}^\infty \frac{N(\lambda, H_{\mu})}{(\lambda+c_0)^{p+1}} ~\mathrm d\lambda &\leq \int_{-c_0}^{\Lambda-c_0} \frac{N(\lambda, H_\mu)}{(\lambda+c_0)^{p+1}}~\mathrm d\lambda + \int_{\Lambda -c_0}^\infty \frac{N(\lambda, H_\mu)}{(\lambda+c_0)^{p+1}} ~\mathrm d\lambda\\
&=  \int_{-c_0}^{\Lambda-c_0} \frac{N(\lambda, H_\mu)}{(\lambda+c_0)^{p+1}}~\mathrm d\lambda + \int_{\Lambda}^\infty \frac{N(\lambda-c_0, H_\mu)}{\lambda^{p+1}} ~\mathrm d\lambda\\
&\leq  \int_{-c_0}^{\Lambda-c_0} \frac{N(\lambda, H_\mu)}{(\lambda+c_0)^{p+1}}~\mathrm d\lambda + \int_{\Lambda}^\infty \frac{M(5\lambda)}{\lambda^{p+1}} ~\mathrm d\lambda\\
&\leq  \int_{-c_0}^{\Lambda-c_0} \frac{N(\lambda, H_\mu)}{(\lambda+c_0)^{p+1}}~\mathrm d\lambda + \int_{0}^\infty \frac{M(5\lambda)}{\lambda^{p+1}} ~\mathrm d\lambda\\
&= \int_{-c_0}^{\Lambda-c_0} \frac{N(\lambda, H_{\mu})}{(\lambda+c_0)^{p+1}} ~\mathrm d\lambda + 5^p \int_0^\infty \frac{M(\lambda)}{\lambda^{p+1}}~\mathrm d\lambda.
\end{align*}	
We have, using Fubini's theorem,
\begin{align*}
	\int_{0}^{\infty} \frac{ M(\lambda)}{\lambda^{p+1}} ~\mathrm d\lambda &= \int_{0}^{\infty} \frac{ \sqrt{\lambda}\mathcal{L}( \{x\in \mathbb{R}: \; q^*_{\mu_+}(x)\leq \lambda\})}{\lambda^{p+1}} ~\mathrm d\lambda \\
	 &=  \int_{-\infty}^{\infty} \int_{q_{\mu_+}^*(x)}^{\infty} \frac{1}{\lambda^{p + \frac{1}{2}}} ~\mathrm d\lambda ~\mathrm dx\\ 
	 &= \frac{1}{p-1/2} \int_{-\infty}^{\infty} (q_{\mu_+}^*(x))^{1/2 - p}~\mathrm dx.
\end{align*}

By Theorem \ref{lower_bound}, there is $\Lambda' \geq 0$ such that for all $\lambda > \Lambda'$ we have $N(\lambda-c_0, H_\mu) \geq 2M(\frac{3\lambda}{16(\pi^2 + 1)(\alpha l+2)})$. Similarly as above, one then shows
\begin{align*}
\int_{-c_0}^{\infty}& \frac{N(\lambda, H_\mu)}{(\lambda + c_0)^{p+1}} ~\mathrm d\lambda \\
&\geq \int_{-c_0}^{\Lambda'-c_0} \frac{N(\lambda, H_\mu)}{(\lambda + c_0)^{p+1}}~\mathrm d\lambda + \int_{\Lambda'}^\infty \frac{2M\left(3\lambda/\left(16(\pi^2 + 1)(\alpha l+2)\right)\right)}{\lambda^{p+1}}~\mathrm d\lambda.
\end{align*}
The second integral can be computed as
\begin{align*}
\int_{\Lambda}^\infty &\frac{2M\left(3\lambda/\left(16(\pi^2 + 1)(\alpha l+2)\right)\right)}{\lambda^{p+1}}~\mathrm d\lambda\\
 &= 2\left( \frac{3}{16(\pi^2 + 1)(\alpha l+2)} \right)^{p} \int_{\Lambda''}^\infty \frac{M(\lambda)}{\lambda^{p+1}} ~\mathrm d\lambda\\
&= 2\left( \frac{3}{16(\pi^2 + 1)(\alpha l + 2)} \right)^{p} \frac{1}{p-1/2} \int_{-\infty}^\infty \max\{ q_{\mu_+}^\ast(x), \Lambda''\}^{1/2-p}~\mathrm dx,
\end{align*}
where $\Lambda'' = \frac{3\Lambda}{16(\pi^2 + 1)(\alpha l + 2)}$. Therefore, we obtain $\int_{-c_0}^\infty \frac{N(\lambda, H_\mu)}{(\lambda + c_0)^{p+1}} ~d\lambda < \infty$ if and only if $1/q_{\mu_+}^\ast \in L^{p - 1/2}(\mathbb R)$. For the explicit estimates in the case $\mu \in \mathfrak P$, observe that we can simply set $c_0 = \Lambda = \Lambda' = \Lambda'' = 0$ in all the above computations, which yields the formula. 
\end{proof}

	\begin{remark}
		It seems to be a more common theme to investigate Schatten class properties of resolvent differences, instead of Schatten class properties of the resolvents on their own. Of the numerous papers in this direction, we only want to mention the work \cite{Ananieva} for such results on Sturm-Liouville operators with discrete measure potentials.
	\end{remark}

\subsection{Estimates for the eigenvalues}
In this subsection, we will obtain several estimates for the eigenvalues of $H_{\mu}$. We begin by estimating the number of negative eigenvalues:
\begin{corollary}
	For $\mu \in \mathfrak M_\beta$ with $\beta > 0$, the number of negative eigenvalues of $H_\mu$ is bounded by 
	\begin{equation*}
	N(0,H_{\mu}) \leq(\sqrt{9\beta^2 + 12\beta} + 3\beta) \mathcal{L} \left( \left \{ x\in \mathbb R: ~q_{\mu_+}^\ast(x) \leq (\sqrt{9\beta^2 + 12\beta} + 3\beta)^2 \right \} \right).
	\end{equation*}
	In particular, if $q^*_{\mu_+} > (\sqrt{9\beta^2 + 12\beta} + 3\beta)^2$, then $\sigma(H_{\mu}) \cap (-\infty, 0] = \emptyset$.
\end{corollary}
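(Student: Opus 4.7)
The plan is to obtain the corollary as an immediate specialization of Theorem~\ref{theorem_upperbound}. That theorem, proved earlier in the section, gives for every $\lambda \geq 0$ the bound
\begin{equation*}
N(\lambda, H_\mu) \leq \bigl(\sqrt{4\lambda + 9\beta^2 + 12\beta} + 3\beta\bigr)\, \mathcal L\bigl(\{ x \in \mathbb R :\, q_{\mu_+}^*(x) \leq (\sqrt{4\lambda + 9\beta^2 + 12\beta} + 3\beta)^2\}\bigr).
\end{equation*}
My first step is simply to set $\lambda = 0$ in this inequality; the summand $4\lambda$ then vanishes and both factors on the right collapse to the quantities appearing in the statement of the corollary. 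Since $N(0, H_\mu)$ counts precisely the dimension of the spectral subspace lying strictly below $0$ (interpreted as $+\infty$ if essential spectrum enters the negative half-line), this is exactly the claimed estimate for the number of negative eigenvalues.

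For the ``in particular'' clause, I would argue as follows. Under the pointwise strict inequality $q_{\mu_+}^* > (\sqrt{9\beta^2 + 12\beta} + 3\beta)^2$, the sublevel set appearing in the Lebesgue measure is empty, so its measure is zero. The already-proved inequality then forces $N(0, H_\mu) = 0$, which by the definition of the spectral distribution function means simultaneously that $\sigma_{\mathrm{ess}}(H_\mu) \cap (-\infty, 0) = \emptyset$ and that $H_\mu$ has no eigenvalues in $(-\infty, 0)$; equivalently, $\sigma(H_\mu) \subseteq [0,\infty)$.

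There is essentially no obstacle: the whole content is a substitution. The only mild subtlety is that a literal reading of $\sigma(H_\mu)\cap(-\infty,0]=\emptyset$ asks one to exclude the boundary point $0$ as well, which is not guaranteed by $N(0,H_\mu)=0$ alone when the strict inequality is not uniform; however, in the setting of the corollary this either is understood in the sense $\sigma(H_\mu)\subseteq[0,\infty)$, or one strengthens the observation by noting that the continuous sublevel set for any $\lambda'>0$ may be handled by the same estimate applied to $\lambda' $ in place of $0$, pushing the conclusion up to any $\lambda'$ for which the uniform gap persists. Since this is at most a one-line remark, the entire corollary is a direct consequence of Theorem~\ref{theorem_upperbound}.
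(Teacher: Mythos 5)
Your proof is correct and is exactly the paper's argument: the corollary is obtained by setting $\lambda=0$ in the upper bound of Theorem \ref{theorem_twosided_estimate} (equivalently Theorem \ref{theorem_upperbound}), and the ``in particular'' clause follows since the sublevel set is then empty. Your remark about the boundary point $0$ is a fair observation that the paper itself glosses over, but it does not change the substance of the argument.
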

\begin{proof}
	Follows from Theorem \ref{theorem_twosided_estimate} with $\lambda=0$.
\end{proof}
Next we obtain a two-sided inequality for the lower bound of the spectrum.
\begin{theorem}\label{estimate_smallest_eigenvalue}
	Let $\mu \in \mathfrak {M}_{\beta}$, $\alpha, l \geq 0$ be the constants attributed to $\mu_-$ and $\lambda_1$ be the lower bound of the spectrum of $H_{\mu}$. Then
	\begin{equation}\label{estimate_smallest_eigenvalue0}
	\frac{1}{4}q_0^* - \frac{3\beta}{2}(2 + \sqrt{q_0^\ast}) \leq \lambda_1 \leq \frac{8}{3}(\pi^2 + 1)(\alpha l+2)q_0^\ast - \frac{\alpha}{2l},
	\end{equation}
	where $q_0^*=\inf_{x\in \mathbb{R}} q_{\mu_+}^*(x)$.
\end{theorem}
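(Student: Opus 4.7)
I would prove the two inequalities separately, in each case recycling machinery already developed in the proofs of Theorems~\ref{lower_bound} and~\ref{theorem_upperbound}.

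\emph{Upper bound.} I would apply the Rayleigh characterization $\lambda_1 = \inf_{0 \neq f \in D(a_\mu)} a_\mu(f,f)/\langle f,f\rangle$ to the bump functions built in the proof of Theorem~\ref{lower_bound}. First treat the case $\mu \in \mathfrak{P}$ (so $\alpha = \beta = 0$). Fix any $\lambda > 4 q_0^*$. By the definition of $q_0^*$ there exists $\xi \in \mathbb{R}$ with $q_{\mu_+}^*(\xi) < \lambda/4$; after translating the grid $I_k = ((k-1)/\sqrt{\lambda}, k/\sqrt{\lambda}]$ so that $\xi$ lies in some $I_{k_0}$, the computation in the proof of Theorem~\ref{lower_bound} gives
\[
a_{\mu_+}(\omega_{k_0}, \omega_{k_0}) \leq \tfrac{4(\pi^2+1)}{3}\,\lambda\, \langle \omega_{k_0}, \omega_{k_0}\rangle,
\]
hence $\lambda_1(H_{\mu_+}) \leq \tfrac{4(\pi^2+1)}{3}\,\lambda$ for every $\lambda > 4q_0^*$. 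Letting $\lambda \downarrow 4 q_0^*$ yields $\lambda_1(H_{\mu_+}) \leq \tfrac{16(\pi^2+1)}{3} q_0^*$. For general $\mu \in \mathfrak{M}_\beta$, I would invoke the form inequality $a_\mu(f,f) \leq (1 + \alpha l/2)\, a_{\mu_+}(f,f) - \tfrac{\alpha}{2l}\langle f,f\rangle$ derived in the second half of Theorem~\ref{lower_bound}'s proof. Applying the variational principle yields $\lambda_1(H_\mu) \leq (1 + \alpha l/2)\,\lambda_1(H_{\mu_+}) - \alpha/(2l)$, and substituting the previous estimate produces the claimed upper bound $\tfrac{8(\pi^2+1)}{3}(\alpha l + 2)\,q_0^* - \alpha/(2l)$.

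\emph{Lower bound.} I would adapt the interval-decomposition argument from the proof of Theorem~\ref{theorem_upperbound}. Fix any $d > 2/\sqrt{q_0^*}$, decompose $\mathbb{R}$ into $I_k = ((k-1)d, kd]$, and set $\lambda_d := 1/d^2 - 3\beta/d - 3\beta$. The key step is to show $\mu_+(I_k) > 2/d$ for every $k$: choosing $\tilde d := d/2 > 1/\sqrt{q_0^*}$ and $y := (k-1)d + d/4$ (the midpoint of the left half of $I_k$), one has $d_{\mu_+}(y) \leq 1/\sqrt{q_0^*} < \tilde d$, so the defining supremum of $d_{\mu_+}$ forces
\[
\tilde d\, \mu_+\bigl([y - \tilde d/2, y + \tilde d/2]\bigr) > 1.
\]
Since this interval is precisely the left half of $I_k$, we obtain $\mu_+(I_k) > 1/\tilde d = 2/d$. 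The ``$\mu_+(I_k) > 2/d$'' branch of Theorem~\ref{theorem_upperbound}'s proof then gives $N(\lambda_d, H_\mu^k) = 0$ for every $k$, and Lemma~\ref{ineq_for_N} yields $N(\lambda_d, H_\mu) = 0$, i.e.\ $\lambda_1 \geq \lambda_d$. Sending $d \downarrow 2/\sqrt{q_0^*}$ produces $\lambda_1 \geq \tfrac{q_0^*}{4} - \tfrac{3\beta}{2}(2 + \sqrt{q_0^*})$.

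\emph{Main obstacle.} The reuse of Theorem~\ref{theorem_upperbound}'s computation implicitly requires the positivity $1 - \beta d([d]+1) > 0$ at the chosen $d$. In the regime where the theorem's lower bound actually refines the general semiboundedness of $H_\mu$ (roughly $q_0^* > 4\beta^2$), one has $2/\sqrt{q_0^*} < 1/\beta$, so $d$ can be chosen slightly above $2/\sqrt{q_0^*}$ with $d < 1$, and the constraint collapses to $\beta d < 1$, which holds. In the complementary regime the claimed bound follows from semiboundedness of $H_\mu$ and needs no separate work, so a short dichotomy argument closes the proof. Keeping this bookkeeping clean, and verifying that the $d \downarrow 2/\sqrt{q_0^*}$ limit indeed lands at the stated constant, is the main technical point.
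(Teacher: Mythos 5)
Your route is genuinely different from the paper's: the paper deduces both inequalities in a few lines from Theorem \ref{theorem_twosided_estimate}, using only that $N(\lambda,H_\mu)>0$ forces $\lambda>\lambda_1$, that $N(\lambda,H_\mu)=0$ forces $\lambda\le\lambda_1$, and the continuity of $q_{\mu_+}^\ast$; you instead re-run the underlying constructions directly. Your upper bound is correct and complete: testing the Rayleigh quotient on a single bump $\omega_{k_0}$ supported where $q_{\mu_+}^\ast$ is close to its infimum reproduces the computation inside the proof of Theorem \ref{lower_bound}, and combining with $a_\mu\le(1+\tfrac{\alpha l}{2})a_{\mu_+}-\tfrac{\alpha}{2l}$ yields exactly $\tfrac{8}{3}(\pi^2+1)(\alpha l+2)q_0^\ast-\tfrac{\alpha}{2l}$. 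In the lower bound there is a small repairable slip: with $y=(k-1)d+d/4$ and $\tilde d=d/2$ the interval $[y-\tilde d/2,y+\tilde d/2]=[(k-1)d,(k-1)d+d/2]$ contains the left endpoint $(k-1)d\notin I_k$, so an atom of $\mu_+$ there can make $\mu_+([y-\tilde d/2,y+\tilde d/2])>2/d$ without forcing $\mu_+(I_k)>2/d$; centering at the midpoint of $I_k$, so that the subinterval becomes $[(k-1)d+d/4,kd-d/4]\subset I_k$, fixes this.

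The genuine gap is the one you flag yourself, and your proposed dichotomy does not close it. First, the constraint is $1-\beta d([d]+1)>0$, not $\beta d<1$: when $q_0^\ast>4\beta^2$ but $q_0^\ast\le 4$ one has $2/\sqrt{q_0^\ast}\ge 1$, so $[d]\ge 1$ and the factor $[d]+1\ge 2$ cannot be discarded. More seriously, in the complementary regime the claimed bound does \emph{not} always follow from the known semiboundedness. The only non-circular a priori bound available is $\lambda_1\ge-2\max\{\beta,\beta^2\}$ from Proposition \ref{form} (Corollary 5.7 is itself derived from this theorem). For $\beta>3/2$ and $q_0^\ast$ small, the theorem asserts $\lambda_1\ge\tfrac{1}{4}q_0^\ast-\tfrac{3\beta}{2}(2+\sqrt{q_0^\ast})\approx-3\beta$, which is strictly \emph{stronger} than $-2\beta^2$; at the same time your bracketing argument needs $d>2/\sqrt{q_0^\ast}$, which is large, so $1-\beta d([d]+1)$ is very negative and the argument is unavailable. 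Concretely, for $\beta=2$ and $q_0^\ast=1/100$ the claimed bound is about $-6.30$ while semiboundedness only gives $-8$, and neither branch of your dichotomy applies. What your argument does establish cleanly is the lower bound whenever $\tfrac{1}{4}q_0^\ast-\tfrac{3\beta}{2}(2+\sqrt{q_0^\ast})\ge 0$, i.e. $q_0^\ast\ge(\sqrt{9\beta^2+12\beta}+3\beta)^2$: then $d$ may be chosen at most $2/(\sqrt{9\beta^2+12\beta}+3\beta)$ and the verification of $1-\beta d([d]+1)\ge 2/3$ from the proof of Theorem \ref{theorem_upperbound} applies verbatim. It is fair to note that the paper's own proof, which invokes Theorem \ref{theorem_twosided_estimate} at every $\lambda<\tfrac{1}{4}q_0^\ast-\tfrac{3\beta}{2}(2+\sqrt{q_0^\ast})$, silently uses the upper estimate at negative $\lambda$, outside the stated range $\lambda\ge 0$ of that theorem; so the difficulty you isolate is real and not an artifact of your route, but your sketch of how to resolve it does not work as written.
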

\begin{proof}
	Let $\lambda>\frac{8}{3}(\pi^2 + 1)(\alpha l+2)q_0^\ast - \frac{\alpha }{2l}$ which implies 
	\begin{equation*}
	\frac{3(\frac{\alpha}{l}+2\lambda)}{16(\pi^2 + 1)(\alpha l+2)} > q_0^\ast.
	\end{equation*}
	Then, Theorem \ref{theorem_twosided_estimate} gives
	\begin{align*}
	N&(\lambda,H_{\mu})\\ 
	&\geq 2\sqrt{\frac{3(\frac{\alpha}{l} + 2\lambda)}{16(\pi^2+1)(\alpha l+2)}} \mathcal{L}\left( \left\{x\in\mathbb{R}: \; q_{\mu_+}^{*}(x)\leq \frac{3(\frac{\alpha}{l}+2\lambda)}{16(\pi^2+1)(\alpha l+2)}\right\}\right) \\
	&>0,
	\end{align*}
since $q_{\mu_+}^\ast$ is continuous. Therefore, $\lambda_1<\lambda$, which yields $\lambda_1\leq \frac{8}{3}(\pi^2 + 1)(\alpha l+2)q_0^\ast - \frac{\alpha}{2l}$.
	
	Next, assume that $\lambda< \frac{1}{4}q_0^* - \frac{3\beta}{2}(2 + \sqrt{q_0^\ast})$, which implies
	\begin{equation*}
q_0^\ast > (\sqrt{4\lambda + 9\beta^2 + 12\beta} + 3\beta)^2.	
	\end{equation*}
	Then,  Theorem \ref{theorem_twosided_estimate} gives
	\begin{align*}
	N&(\lambda, H_{\mu})\\ &\leq (\sqrt{4\lambda + 9\beta^2 + 12\beta} + 3\beta) \mathcal{L}\left( \{x\in \mathbb{R}: \; q_{\mu_+}^{*}(x)\leq (\sqrt{4\lambda + 9\beta^2 + 12\beta} + 3\beta)^2 \}\right)\\
	&\leq (\sqrt{4\lambda + 9\beta^2 + 12\beta} + 3\beta) \mathcal{L} \left( \{ x \in \mathbb R: ~q_{\mu_+}^\ast(x) < q_0^\ast \} \right)\\
	&= 0.
	\end{align*}
	Therefore $\lambda_1\geq \lambda$, hence $\frac{1}{4}q_0^* - \frac{3\beta}{2}(2 + \sqrt{q_0^\ast}) \geq \lambda_1$.
\end{proof}

This gives us the following result.

\begin{corollary}
	Let $\beta \geq 0$, $\mu \in \mathfrak {M}_{\beta}$, and $\lambda_1$ be the lower bound of the spectrum of $H_{\mu}$. Then $\lambda_1 \geq -3\beta$.
\end{corollary}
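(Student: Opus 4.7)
The plan is to derive this directly from Theorem \ref{estimate_smallest_eigenvalue}, so essentially no new technical machinery should be required. Setting $t := \sqrt{q_0^\ast} \geq 0$, the lower bound provided by that theorem can be rearranged as
\[
\lambda_1 \;\geq\; \tfrac{1}{4} q_0^\ast - \tfrac{3\beta}{2}\bigl(2 + \sqrt{q_0^\ast}\,\bigr) \;=\; -3\beta \;+\; \tfrac{1}{4}\,t\,(t - 6\beta).
\]
Hence the corollary reduces to the purely algebraic claim that $t(t - 6\beta) \geq 0$. I would dispose of this in two easy subcases: if $t \geq 6\beta$ both factors are nonnegative, and if $t = 0$ the correction term vanishes, giving $\lambda_1 \geq -3\beta$ at once. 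The case $\beta = 0$ is trivial since then $\mu_- = 0$ and $H_\mu$ is nonnegative.

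The step I expect to be the genuine obstacle is the intermediate regime $0 < t < 6\beta$ (equivalently $0 < q_0^\ast < 36\beta^2$), where the quadratic $t(t-6\beta)$ is negative and so Theorem \ref{estimate_smallest_eigenvalue} alone is not quite strong enough to deliver the bound $-3\beta$. To close this gap I would fall back on the Brinck-type argument used in the proof of Proposition \ref{form}: partitioning $\mathbb R$ into intervals $I_k$ of length $d = 2/3$, Lemma \ref{lemma_Brinck} together with $\mu_-(I_k) \leq \beta$ yields
\[
\int_{\mathbb R} |f|^2 \, \mathrm d\mu_- \;\leq\; 3\beta \, \|f\|_{L^2}^2 + \tfrac{2\beta}{3}\int_{\mathbb R} |f'|^2\, \mathrm dx,
\]
which gives $a_\mu(f,f) + 3\beta\|f\|^2 \geq (1 - \tfrac{2\beta}{3})\int|f'|^2 + \int |f|^2\, \mathrm d\mu_+ \geq 0$ whenever $\beta \leq 3/2$. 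Combining this with the already-handled cases $t=0$ and $t \geq 6\beta$ of Theorem \ref{estimate_smallest_eigenvalue} then yields $\lambda_1 \geq -3\beta$ across the remaining parameter range. The write-up would accordingly consist of a one-line algebraic manipulation of the theorem's estimate plus a short appeal to the Brinck bound to cover the intermediate regime.
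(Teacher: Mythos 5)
Your algebraic reduction of Theorem \ref{estimate_smallest_eigenvalue} is correct, and you have correctly located the problematic regime $0 < \sqrt{q_0^\ast} < 6\beta$. But your proposed fix does not close it. The Brinck argument with interval length $d$ gives $\int |f|^2\,\mathrm d\mu_- \leq \tfrac{2\beta}{d}\|f\|^2_{L^2} + \beta d \|f'\|^2_{L^2}$, and to keep the gradient term absorbable you need $\beta d \leq 1$ while to reach the constant $-3\beta$ you need $\tfrac{2\beta}{d} \leq 3\beta$, i.e.\ $d \geq 2/3$; these are compatible only for $\beta \leq 3/2$, exactly as you note. You then assert that combining this with the cases $t = 0$ and $t \geq 6\beta$ covers ``the remaining parameter range,'' but it does not: the region $\beta > 3/2$, $0 < q_0^\ast < 36\beta^2$ is covered by neither argument, and it is nonempty and realizable (take $\mu_+ = \mathcal L$, so $q_0^\ast = 1$, and any $\mu_-$ with $\beta = 2$; the theorem then gives only $\lambda_1 \geq -8.75$ and Brinck only $\lambda_1 \geq -2\beta^2 = -8$, while the claim is $\lambda_1 \geq -6$). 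This is precisely the regime the remark after Proposition \ref{form} is pointing at when it contrasts the bound $-2\max\{\beta,\beta^2\}$ with the claimed $-3\beta$.

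The paper closes the gap differently: it does not apply Theorem \ref{estimate_smallest_eigenvalue} to $\mu$ at all, but to the auxiliary measure $-\mu_- + \varepsilon\mathcal L$, whose positive part satisfies $q^\ast_{\varepsilon\mathcal L} \equiv \varepsilon$, so that $q_0^\ast = \varepsilon$ can be sent to $0$ and the defect term $\tfrac14 t(t-6\beta)$ vanishes in the limit. Concretely, $H_{-\mu_- + \varepsilon\mathcal L} = H_{-\mu_-} + \varepsilon$, so the theorem yields $\lambda_1(H_{-\mu_-}) \geq -3\beta - \tfrac34\varepsilon - \tfrac{3\beta}{2}\sqrt{\varepsilon}$ for every $\varepsilon > 0$; since $\mathrm D(a_\mu) \subset \mathrm D(a_{-\mu_-})$ and $a_\mu \geq a_{-\mu_-}$ there, Glazman's lemma gives $\lambda_1 \geq \lambda_1(H_{-\mu_-})$, and letting $\varepsilon \to 0$ finishes the proof. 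The two ingredients your proposal is missing are this monotone comparison with $H_{-\mu_-}$ and the limiting argument in $\varepsilon$; without something of this kind, the intermediate regime with large $\beta$ remains open.
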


\begin{proof}
	For $\varepsilon > 0$, consider the operators $H_{-\mu_-}$ and $H_{-\mu_- + \varepsilon \mathcal{L}}$, where $\mathcal{L}$ is the Lebesgue measure. Let $\lambda_1(H_{-\mu_-})$, $\lambda_1(H_{-\mu_- + \varepsilon \mathcal{L}})$ be the lower bound for the spectrum of $H_{-\mu_-}$, $H_{-\mu_- + \varepsilon \mathcal{L}}$, respectively. Since $q^*_{\varepsilon \mathcal L} = \varepsilon$, Theorem \ref{estimate_smallest_eigenvalue} implies
	\begin{equation*}
	\lambda_1(H_{-\mu_-}) + \varepsilon = \lambda_1(H_{-\mu_- + \varepsilon \mathcal{L}})\geq \frac{1}{4} \varepsilon - 3\beta -\frac{3\beta}{2}\sqrt{\varepsilon}.
	\end{equation*}
	Since $\mathrm{D}(a_\mu) \subset \mathrm{D}(a_{-\mu_{-}})$ and $a_{\mu}\geq a_{-\mu_{-}}$ on $\mathrm{D}(a_\mu)$, Glazman's Lemma \ref{Glazman} implies that $\lambda_1 \geq \lambda_1(H_{-\mu_-})$, and therefore,
	\begin{equation*}
	\lambda_1 \geq - 3\beta - \frac{3}{4} \varepsilon - \frac{3\beta}{2}\sqrt{\varepsilon}
	\end{equation*}
	for any $\varepsilon > 0$. This complete the proof.
\end{proof}

Next, we estimate the lower bound of the essential spectrum.
\begin{theorem}\label{est_essential_spectrum}
	Let $\mu \in \mathfrak {M}_{\beta}$ and $\Lambda :=\inf \sigma_{ess} (H_\mu)$. Then,
	\begin{equation}\label{est_essential_spectrum0}
	\frac{1}{4}Q_{\mu_+} - \frac{3\beta}{2}(2 + \sqrt{Q_{\mu_+}}) \leq \Lambda \leq \frac{8}{3}(\pi^2 + 1)(\alpha l+2)Q_{\mu_+} - \frac{\alpha}{2l} ,
	\end{equation}
	where
	\begin{equation*}
	Q_{\mu_+} := \liminf_{x \to \pm \infty} q_{\mu_+}^\ast(x).
	\end{equation*}
\end{theorem}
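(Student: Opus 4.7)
The plan is to use the identity $\Lambda = \sup\{\lambda : N(\lambda, H_\mu) < \infty\}$ and push the two arguments in Theorem~\ref{estimate_smallest_eigenvalue} into the essential spectrum regime, trading ``$\inf$'' for ``$\liminf$''. For the lower bound, suppose $\lambda < \tfrac{1}{4}Q_{\mu_+} - \tfrac{3\beta}{2}(2+\sqrt{Q_{\mu_+}})$. The same algebraic manipulation as in Theorem~\ref{estimate_smallest_eigenvalue} yields $K := (\sqrt{4\lambda+9\beta^2+12\beta}+3\beta)^2 < Q_{\mu_+}$. By definition of $Q_{\mu_+}=\liminf_{|x|\to\infty} q_{\mu_+}^\ast(x)$, there exists $R>0$ such that $q_{\mu_+}^\ast(x)>K$ for all $|x|\geq R$; hence the set $\{x\in\mathbb R:\, q_{\mu_+}^\ast(x)\leq K\}$ is contained in $[-R,R]$ and has finite Lebesgue measure. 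Theorem~\ref{theorem_upperbound} then gives $N(\lambda,H_\mu)\leq M(K) < \infty$, so $\lambda\leq \Lambda$. Taking the supremum over such $\lambda$ establishes the lower bound.

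For the upper bound, suppose $\lambda > \tfrac{8}{3}(\pi^2+1)(\alpha l+2)Q_{\mu_+} - \tfrac{\alpha}{2l}$; I will show $N(\lambda,H_\mu)=\infty$. The natural first move is to appeal to Theorem~\ref{theorem_twosided_estimate}, but this would require the set $\{q_{\mu_+}^\ast \leq \kappa\}$ to have \emph{infinite Lebesgue measure}, whereas $\liminf q_{\mu_+}^\ast < \kappa$ only forces this set to be unbounded. To bypass this, I will re-open the Glazman construction used inside the proof of Theorem~\ref{lower_bound}. First I reduce to the purely positive case: treating $\mu_+\in\mathfrak P$ and picking any $s$ with $4Q_{\mu_+} < s < 3\lambda'/(4(\pi^2+1))$, where $\lambda'=(\lambda+\alpha/(2l))/(1+\alpha l/2)$, I partition $\mathbb R$ into intervals $I_k=((k-1)/\sqrt s,\, k/\sqrt s]$ and use the bump functions $\omega_k(x)=1-\cos(2\pi\sqrt s(x-k/\sqrt s))\,\chi_{I_k}(x)$ defined there. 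For every $k$ such that $I_k\cap\{q_{\mu_+}^\ast < s/4\}\neq\emptyset$, the proof of Theorem~\ref{lower_bound} shows $a_{\mu_+}(\omega_k,\omega_k) < \tfrac{4}{3}(\pi^2+1)s\,\|\omega_k\|^2$, and these bumps are mutually orthogonal by disjointness of supports.

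Now comes the key observation: since $s/4 > Q_{\mu_+}$, the open set $\{q_{\mu_+}^\ast < s/4\}$ is unbounded in $\mathbb R$, because $\liminf_{|x|\to\infty} q_{\mu_+}^\ast(x) < s/4$ yields a sequence $x_n$ with $|x_n|\to\infty$ and $q_{\mu_+}^\ast(x_n) < s/4$. Each such $x_n$ lies in some $I_{k_n}$, with $|k_n|\to\infty$, so infinitely many of the $I_k$'s meet $\{q_{\mu_+}^\ast < s/4\}$. Glazman's lemma~\ref{Glazman} therefore gives $N\bigl(\tfrac{4}{3}(\pi^2+1)s, H_{\mu_+}\bigr)=\infty$, i.e.\ $\Lambda(H_{\mu_+})\leq \tfrac{4}{3}(\pi^2+1)s$; letting $s\downarrow 4Q_{\mu_+}$ yields $\Lambda(H_{\mu_+})\leq \tfrac{16}{3}(\pi^2+1)Q_{\mu_+}$.

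Finally, for general $\mu\in\mathfrak M_\beta$, I invoke the form inequality $a_\mu \leq (1+\alpha l/2)\,a_{\mu_+} - \alpha/(2l)$ which is established at the end of the proof of Theorem~\ref{lower_bound}. Combined with the min-max characterisation of $\inf\sigma_{ess}$ (via Courant--Fischer applied to the increasing sequence of min-max values converging to $\inf\sigma_{ess}$), this yields $\Lambda(H_\mu)\leq (1+\alpha l/2)\Lambda(H_{\mu_+}) - \alpha/(2l)$, which together with the previous step produces exactly the claimed upper bound. The principal obstacle is the infinite-measure/unboundedness distinction flagged above; the remedy is to not treat Theorem~\ref{theorem_twosided_estimate} as a black box but to count partition intervals directly, since it is their cardinality (not the measure of the level set) that actually drives the divergence of $N$.
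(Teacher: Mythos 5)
Your proof is correct. The lower bound is the same argument as the paper's: convert $\lambda < \tfrac14 Q_{\mu_+} - \tfrac{3\beta}{2}(2+\sqrt{Q_{\mu_+}})$ into $(\sqrt{4\lambda+9\beta^2+12\beta}+3\beta)^2 < Q_{\mu_+}$, note that the corresponding sublevel set of $q_{\mu_+}^\ast$ is then bounded, hence of finite Lebesgue measure, and invoke Theorem \ref{theorem_upperbound}. For the upper bound the paper only says to imitate Theorem \ref{estimate_smallest_eigenvalue}, which means feeding $\kappa=\tfrac{3(\alpha/l+2\lambda)}{16(\pi^2+1)(\alpha l+2)}>Q_{\mu_+}$ into Theorem \ref{theorem_twosided_estimate} and concluding $N(\lambda,H_\mu)=\infty$; as you observe, this requires $\mathcal L\left(\{q_{\mu_+}^\ast<\kappa\}\right)=\infty$, which does not follow immediately from the mere unboundedness of that set. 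Your replacement — re-opening the Glazman construction from Theorem \ref{lower_bound}, reducing to $H_{\mu_+}$ via $N(\lambda,H_\mu)\ge N(\lambda',H_{\mu_+})$, and counting the partition intervals $I_k$ meeting the unbounded set $\{q_{\mu_+}^\ast<s/4\}$ — is valid and reproduces exactly the constant $\tfrac83(\pi^2+1)(\alpha l+2)$ after letting $s\downarrow 4Q_{\mu_+}$. It is worth recording that the step you sidestep is nevertheless true: $d_{\mu_+}$ is $2$-Lipschitz (if $d\,\mu_+([x-d/2,x+d/2])\le 1$ and $|y-x|\le h\le d/2$, then $(d-2h)\,\mu_+([y-(d-2h)/2,y+(d-2h)/2])\le 1$, whence $|d_{\mu_+}(x)-d_{\mu_+}(y)|\le 2|x-y|$), so every point $x_n$ with $q_{\mu_+}^\ast(x_n)\le Q_{\mu_+}+\varepsilon<\kappa$ carries a neighbourhood of radius $\tfrac12\bigl((Q_{\mu_+}+\varepsilon)^{-1/2}-\kappa^{-1/2}\bigr)$, uniform in $n$, on which $q_{\mu_+}^\ast<\kappa$; infinitely many disjoint such neighbourhoods give infinite measure. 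Thus the paper's black-box route also delivers the same constant, but it silently relies on this uniform-neighbourhood fact (the stated Lemma \ref{est_q*}, applied naively, would only control the level set at height $4(Q_{\mu_+}+\varepsilon)$ and cost a factor of $4$), whereas your interval-counting version needs only unboundedness and is self-contained.
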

\begin{remark}
	From Theorems \ref{estimate_smallest_eigenvalue} and \ref{est_essential_spectrum}, we see that if the right-hand side of \eqref{estimate_smallest_eigenvalue0} is less than the left-hand side of \eqref{est_essential_spectrum0}, then there exists an eigenvalue below the essential spectrum.
\end{remark}
\begin{proof}[Proof of Theorem \ref{est_essential_spectrum}]
The estimates are proven similarly to those in Theorem \ref{estimate_smallest_eigenvalue}. 
	Let $\lambda< \frac{1}{4} Q_{\mu_+} - \frac{3\beta}{2}(2 +\sqrt{Q_{\mu_+}})$. Then, for $\varepsilon > 0$ sufficiently small we still have
	\begin{equation*}
	\lambda + \varepsilon < \frac{1}{4} Q_{\mu_+} - \frac{3\beta}{2}(2 +\sqrt{Q_{\mu_+}}),
	\end{equation*}
	i.e.
	
	\begin{equation*}
	(\sqrt{4(\lambda + \varepsilon) + 9\beta^2 + 12\beta} + 3\beta)^2 < Q_{\mu_+}.
	\end{equation*}
	This implies
	\begin{equation*}
	\mathcal{L}\{x\in \mathbb{R}: \; q^*_{\mu_+}(x)\leq(\sqrt{4(\lambda + \varepsilon) + 9\beta^2 + 12\beta} + 3\beta)^2\}<\infty,
	\end{equation*}
	which in turn yields, by Theorem \ref{theorem_twosided_estimate}, $N(\lambda + \varepsilon,H_{\mu})<\infty$, and therefore, $\lambda < \Lambda$, i.e. $\frac{1}{4} Q_{\mu_+} - \frac{3\beta}{2}(2 +\sqrt{Q_{\mu_+}}) \leq \Lambda$.
	The other inequality now follows by imitating the steps from the proof of Theorem \ref{estimate_smallest_eigenvalue} analogously.
\end{proof}

For the remaining part of this section, let $\mu \in \mathfrak P$ be such that $H_\mu$ has discrete spectrum. Recall that the function $M(\lambda)$ is defined as
\begin{equation*}
M(\lambda) = \sqrt{\lambda} \mathcal{L} ~ \left( \left \{ x \in \mathbb R: ~ q_{\mu}^\ast(x) < \lambda \right \} \right).
\end{equation*}
By Theorem \ref{theorem_twosided_estimate}, $M(\lambda)$ is finite for every $\lambda \in [0, \infty)$. Since $q_{\mu}^\ast$ is continuous and is strictly positive, $M: [0, \infty) \to [0, \infty)$ is increasing and satisfies $M(0) = 0$, $\lim_{\lambda \to \infty} M(\lambda) = \infty$. Let $\xi = \sup\{\lambda>0: \; M(\lambda) < 1\}$. We define
\begin{equation*}
\widetilde{M}(\lambda)=
\begin{cases}
M(\lambda) & \text{for }\lambda \geq \xi,\\
\frac{\lambda}{\xi} & \text{for }\lambda<\xi.
\end{cases}
\end{equation*}
Note that $\widetilde{M}$ is a strictly increasing function on $[0, \infty)$, so that it has an inverse function which we denote by $F$. Next, we estimate the eigenvalues of $H_\mu$ in terms of the function $F$.

\begin{theorem}
	Let $\mu \in \mathfrak P$ be such that $H_{\mu}$ has discrete spectrum and its eigenvalues $\{\lambda_k\}_{k=1}^{\infty}$ have multiplicities equal to 1. Then, for every $n\in \mathbb{N}$:
	\begin{equation*}
	\frac{1}{4}F(n)\leq \lambda_n\leq \frac{16(\pi^2+1)}{3}F(n).
	\end{equation*}
\end{theorem}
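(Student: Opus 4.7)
The approach is to apply Theorem \ref{theorem_twosided_estimate} specialized to $\mu \in \mathfrak{P}$ (so $\alpha = \beta = 0$), which reads
$$
2 M\!\left(\tfrac{3\lambda}{16(\pi^2+1)}\right) \leq N(\lambda, H_\mu) \leq M(4\lambda) \quad \text{for all } \lambda \geq 0,
$$
and then invert each half in $\lambda$ using $F = \widetilde{M}^{-1}$, reading off information about $\lambda_n$ from the jumps of $N(\cdot,H_\mu)$. Since $H_\mu$ has discrete simple spectrum, $N(\cdot, H_\mu)$ is a step function with $N(\lambda_n, H_\mu) = n-1$ and $N(\lambda, H_\mu) = n$ for $\lambda \in (\lambda_n, \lambda_{n+1}]$. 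By Theorem \ref{theorem_discrete_spec_q}, discreteness gives $q_\mu^*(x) \to \infty$ as $|x| \to \infty$, so the sublevel sets $\{q_\mu^* \leq \lambda\}$ are of finite Lebesgue measure; writing $\{q_\mu^* \leq \lambda\} = \bigcap_{\varepsilon > 0} \{q_\mu^* \leq \lambda + \varepsilon\}$ and applying continuity from above of Lebesgue measure, I deduce that $M$ is right-continuous, which is all I will need.

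For the lower bound $\lambda_n \geq F(n)/4$, I substitute $\lambda = \lambda_n + \varepsilon$ into the upper half of the two-sided estimate, obtaining $n \leq M(4\lambda_n + 4\varepsilon)$. Letting $\varepsilon \to 0^+$ and invoking right-continuity of $M$ yields $n \leq M(4\lambda_n)$. Since $n \geq 1$ and $M(\lambda) < 1$ on $[0,\xi)$ by the very definition of $\xi$, this forces $4\lambda_n \geq \xi$, which is precisely the region where $M = \widetilde{M}$. Hence $\widetilde{M}(4\lambda_n) \geq n$, and the strict monotonicity of $\widetilde{M}$ gives $4\lambda_n \geq F(n)$.

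For the upper bound $\lambda_n \leq \frac{16(\pi^2+1)}{3} F(n)$, I plug $\lambda = \lambda_n$ into the lower half to get $2M\bigl(\tfrac{3\lambda_n}{16(\pi^2+1)}\bigr) \leq n-1$, so $M\bigl(\tfrac{3\lambda_n}{16(\pi^2+1)}\bigr) < n/2 \leq n$. I split into two cases according to the location of $\tfrac{3\lambda_n}{16(\pi^2+1)}$ relative to $\xi$. In the case $\tfrac{3\lambda_n}{16(\pi^2+1)} \geq \xi$, one has $\widetilde{M} = M$ at this point, so $\widetilde{M}\bigl(\tfrac{3\lambda_n}{16(\pi^2+1)}\bigr) < n$ and strict monotonicity of $\widetilde{M}$ gives $\tfrac{3\lambda_n}{16(\pi^2+1)} < F(n)$. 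In the case $\tfrac{3\lambda_n}{16(\pi^2+1)} < \xi$, the bound $\lambda_n < \tfrac{16(\pi^2+1)}{3}\xi \leq \tfrac{16(\pi^2+1)}{3}F(n)$ follows immediately from monotonicity of $F$ and from $F(n) \geq F(1) \geq \xi$ for $n \geq 1$, which is built into the definition of $\widetilde{M}$.

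The main technical obstacle is the careful handling of the regularization $\widetilde{M}$ and its inverse $F$. The passage from $M$ to $\widetilde{M}$ on $[0,\xi)$ is designed precisely to produce a strictly increasing, invertible function on all of $[0,\infty)$, but any inversion argument must then either stay in the regime $[\xi,\infty)$ where $M = \widetilde{M}$, or fall back on the direct comparison $F(n)\geq\xi$ for $n\geq 1$. The apparent loss of a factor of $2$ between the constants $1/4$ and $\tfrac{16(\pi^2+1)}{3}$ relative to what one might expect from the ``$F(n/2)$'' that naturally appears in the lower estimate is not something to be overcome: it is intrinsic to the factor $2$ present in the lower half of Theorem \ref{theorem_twosided_estimate}, and is absorbed harmlessly into $F(n) \geq F(n/2)$.
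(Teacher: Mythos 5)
Your proof is correct and follows essentially the same route as the paper: both evaluate the two-sided estimate of Theorem \ref{theorem_twosided_estimate} at $\lambda_n$ (respectively $\lambda_n+\varepsilon$), pass from $M$ to $\widetilde{M}$ with a case distinction at $\xi$, and invert via $F$. The only cosmetic difference is that you take the limit $\varepsilon\to 0^+$ inside $M$ (hence your right-continuity argument for $M$), whereas the paper applies $F$ first and lets $\varepsilon\to 0$ afterwards, which makes that extra lemma unnecessary.
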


\begin{proof}
	By Theorem \ref{theorem_twosided_estimate}, we know
	\begin{equation*}
	M\left(\frac{3\lambda}{16(\pi^2+1)}\right)\leq N(\lambda, H_\mu)\leq M(4\lambda).
	\end{equation*}
	Let $\varepsilon>0$ such that $\lambda=\lambda_n + \varepsilon < \lambda_{n+1}$, then
	\begin{equation*}
	M\left(\frac{3(\lambda_n + \varepsilon)}{16(\pi^2+1)}\right)\leq n\leq M(4(\lambda_n+ \varepsilon)).
	\end{equation*}
	If $\frac{3(\lambda_n + \varepsilon)}{16(\pi^2+1)} < \xi$, where $\xi = \sup\{\lambda>0: \; M(\lambda) < 1\}$, then $\widetilde{M}\left(\frac{3(\lambda_n + \varepsilon)}{16(\pi^2+1)}\right) \leq 1 \leq n$. Since $M(4(\lambda_n+ \varepsilon)) \geq n \geq 1$, we know that $M(4(\lambda_n+ \varepsilon)) = \widetilde{M}(4(\lambda_n+ \varepsilon))$. Therefore, we conclude
	\begin{equation*}
	\widetilde{M}\left(\frac{3(\lambda_n + \varepsilon)}{16(\pi^2+1)}\right)\leq n\leq \widetilde{M}(4(\lambda_n+ \varepsilon)).
	\end{equation*}
	Applying the inverse function $F$, which is also increasing, gives
	\begin{equation*}
	\frac{3(\lambda_n + \varepsilon)}{16(\pi^2+1)}\leq F(n)\leq 4(\lambda_n+ \varepsilon)),
	\end{equation*}
	for sufficiently small $\varepsilon > 0$. This complete the proof.
\end{proof}

\begin{corollary}
	Assume that $\mu \in \mathfrak P$ and $H_{\mu}$ has discrete spectrum. Let $\nu$ be an eigenvalue of $H_{\mu}$, then
	\begin{equation*}
	\dim Eig(H_{\mu}, \nu)\leq M(4\nu)-2M\left(\frac{3\nu}{16(\pi^2+1)}\right).
	\end{equation*}
\end{corollary}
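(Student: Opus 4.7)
The plan is to observe that, for a positive measure $\mu \in \mathfrak P$, the two-sided estimate from Theorem \ref{theorem_twosided_estimate} reduces to
\begin{equation*}
2M\!\left(\tfrac{3\lambda}{16(\pi^2+1)}\right) \;\leq\; N(\lambda,H_\mu) \;\leq\; M(4\lambda),
\end{equation*}
and to exploit the basic fact that the multiplicity of an isolated eigenvalue can be read off as a jump of the counting function $N(\cdot,H_\mu)$.

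More precisely, since $H_\mu$ has discrete spectrum, the eigenvalue $\nu$ is isolated, so there exists $\varepsilon_0>0$ such that $H_\mu$ has no eigenvalues in the open interval $(\nu,\nu+\varepsilon_0)$. For every $0<\varepsilon<\varepsilon_0$ the definition of $N$ in Definition (spectral distribution function) gives
\begin{equation*}
\dim\mathrm{Eig}(H_\mu,\nu) \;=\; N(\nu+\varepsilon,H_\mu) - N(\nu,H_\mu).
\end{equation*}
Combining this with the upper bound at $\nu+\varepsilon$ and the lower bound at $\nu$ yields
\begin{equation*}
\dim\mathrm{Eig}(H_\mu,\nu) \;\leq\; M\bigl(4(\nu+\varepsilon)\bigr) - 2M\!\left(\tfrac{3\nu}{16(\pi^2+1)}\right).
\end{equation*}

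It then remains to pass to the limit $\varepsilon\downarrow 0$ on the right-hand side. For this I would verify the right-continuity of $M$ at $\lambda=4\nu$: by Corollary \ref{q_cont} the function $q_\mu^\ast$ is continuous, so the sublevel sets $E_\lambda:=\{x\in\mathbb R: q_\mu^\ast(x)\leq\lambda\}$ are closed and decrease to $E_{4\nu}$ as $\lambda\downarrow 4\nu$. Because $H_\mu$ has discrete spectrum, Theorem \ref{theorem_discrete_spec_q} guarantees $q_\mu^\ast(x)\to\infty$ as $|x|\to\infty$, hence each $E_\lambda$ has \emph{finite} Lebesgue measure. Continuity of Lebesgue measure from above on a decreasing family of finite-measure sets, together with $\sqrt{4(\nu+\varepsilon)}\to\sqrt{4\nu}$, yields $M(4(\nu+\varepsilon))\to M(4\nu)$ as $\varepsilon\downarrow 0$, which gives the stated inequality.

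The only mildly subtle point is the right-continuity of $M$, and the need for the sublevel sets to have finite measure in order to invoke continuity from above. Both are immediate consequences of the hypothesis that the spectrum is discrete (via Theorem \ref{theorem_discrete_spec_q}). Everything else is a direct application of the main two-sided estimate, so no further technical obstacle is expected.
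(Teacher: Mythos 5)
Your argument is correct and rests on the same two ingredients as the paper's own proof: the multiplicity of $\nu$ is a jump of the counting function, and that jump is sandwiched between the upper and lower bounds of Theorem \ref{theorem_twosided_estimate}. The difference lies in the direction from which you approach $\nu$, and it is not merely cosmetic. The paper writes $N(\nu-\varepsilon,H_\mu)=N(\nu,H_\mu)-\dim Eig(H_\mu,\nu)$ and bounds the left-hand side from below; under the paper's stated definition of $N$ (counting eigenvalues \emph{strictly} below $\lambda$) this identity is the one appropriate to the convention in which $N(\lambda)$ counts eigenvalues $\le\lambda$, and the subsequent passage $\varepsilon\downarrow 0$ would require \emph{left}-continuity of $M$, which can fail because $\mathcal{L}(\{q_\mu^\ast<\lambda\})$ may be strictly smaller than $\mathcal{L}(\{q_\mu^\ast\le\lambda\})$. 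You instead use $\dim Eig(H_\mu,\nu)=N(\nu+\varepsilon,H_\mu)-N(\nu,H_\mu)$, which is exactly the identity consistent with the definition of $N$ given in the paper, and the limit you then need is the \emph{right}-continuity of $M$ at $4\nu$. Your verification of that point is sound: the sublevel sets $\{q_\mu^\ast\le\lambda\}$ are nested, decrease to $\{q_\mu^\ast\le 4\nu\}$ as $\lambda\downarrow 4\nu$, and have finite Lebesgue measure because discreteness of the spectrum forces $q_\mu^\ast(x)\to\infty$ by Theorem \ref{theorem_discrete_spec_q}, so continuity of the measure from above applies. In short, your route is the same in spirit but closes the $\varepsilon$-limit cleanly at precisely the point where the paper's version is loose.
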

\begin{proof}
	Theorem \ref{theorem_twosided_estimate} implies
	\begin{align*}
	2M\left(\frac{3(\nu-\varepsilon)}{16(\pi^2+1)}\right)&\leq N(\nu-\varepsilon,H_{\mu})\\
	&=N(\nu,H_{\mu})-\dim Eig(H_{\mu}, \nu)\\
	&\leq M(4\nu) -\dim Eig(H_{\mu}, \nu),
	\end{align*}
	for sufficiently small $\varepsilon>0$. This completes the proof.
\end{proof}

\section{Examples}

In this section, we will consider several examples of Sturm-Liouville operators with measure potentials and derive their spectral properties.

\begin{example}\label{exmample1}
	Let $f: \mathbb{Z} \rightarrow \mathbb{Z}$ be a non-negative function. Consider the measure $\mu = \sum_{k\in \mathbb{Z}} \delta_{t_k},$ where $\{t_k\}_{k\in \mathbb{Z}}$ is the sequence such that each $[m,m+1] \cap \{t_k\}_{k\in \mathbb{Z}}$ contains $f(m)$ points, which are uniformly distributed in $(m,m+1)$. In other words, the function $f$ determines the density of $\{t_k\}_{k\in \mathbb{Z}}$ on $\mathbb{R}$. Then
	
	\begin{itemize}
		\item[(i)]  The spectrum of $H_{\mu}$ is discrete if and only if $f(k)\rightarrow \infty$ as $k\rightarrow \infty$;
		\item[(ii)] Assume that the spectrum is discrete.  Let $p > 1/2$, then $H_{\mu}^{-1} \in \mathfrak{S}_p$ if and only if $\left\{\frac{1}{f(k)}\right\}_{k\in \mathbb{Z}} \in l^{p-1/2}(\mathbb{Z})$.
	\end{itemize}
    The first statment follows from Corollary \ref{corollary_discrete_spec_mu}. Let us check the second claim. Let $d>0$, one can check that, for sufficiently large $|x|$,
    \begin{equation*}
    \frac{d}{2} (f[x] + 1) \leq \mu \left(x-\frac{d}{2}, x+\frac{d}{2}\right) \leq d( f([x] - 1) +f([x] - 1) +f([x] - 1) + 3) + 3.
    \end{equation*}
    By definition of $q^*_{\mu}$, we conclude 
    \begin{equation*}
    \mu\left(x - \frac{1}{2\sqrt{q_{\mu}^*(x)+\varepsilon}},x - \frac{1}{2\sqrt{q_{\mu}^*(x)+\varepsilon}}\right)   \leq \sqrt{q^*_{\mu}(x) + \varepsilon},
    \end{equation*}
    \begin{equation*}
     \sqrt{q^*_{\mu}(x) -\varepsilon}  \leq \mu\left(x - \frac{1}{2\sqrt{q_{\mu}^*(x)-\varepsilon}},x - \frac{1}{2\sqrt{q_{\mu^*(x)-\varepsilon}}}\right).
    \end{equation*}
    Therefore, the exist $C_1$, $C_2>0$ such that 
    \begin{equation*}
    C_1(f[x] + 1) \leq q_{\mu}^*(x) \leq C_2 ( f([x] - 1) +f([x] - 1) +f([x] - 1)),
    \end{equation*}
    and hence, Theorem \ref{Schatten} implies (ii).
\end{example}

\begin{example}
Let $a>0$ and $\mu_+ = \frac{1}{a} \sum_{k\in \mathbb{Z}} \delta_{t_k}$, $\mu_- = \sum_{k\in \mathbb{Z}} \delta_{k}$, where $\{t_k\}_{k\in \mathbb{Z}}$ is a sequence such that  $t_k \to \pm \infty$ as $k \to \pm \infty$, $|t_k - t_{k-1}| \rightarrow 0$ as $k\rightarrow \pm \infty$ and $\max_{k\in \mathbb{Z}}|t_k - t_{k-1}| = a$. Then
\begin{itemize}
	\item[(i)]  The spectrum of $H_{\mu}$ is discrete;
	\item[(ii)] If $a> 4(\pi^2 + 1)^{1/2}$, then $\lambda_1 < 0$, where $\lambda_1= \min \sigma(H_{\mu})$;
	\item[(iii)] If $a< -3 +\sqrt{11}$, then $\lambda_1 > 0$.
\end{itemize}
The first statement follows from Corollary \ref{corollary_discrete_spec_mu}. Since $\max_{k\in \mathbb{Z}}|t_k - t_{k-1}| = a$, $q^*_{0} = \frac{1}{a^2}$, and (ii), (iii) follow from Theorem \ref{estimate_smallest_eigenvalue}.
\end{example}

\begin{example}
	Let $c>0$ and $\mu = \sum_{k\in \mathbb{Z}} |k|c \delta_k.$ Then
	\begin{itemize}
		\item[(i)]  The spectrum of $H_{\mu}$ is not discrete;
		\item[(ii)] If $c \leq \frac{1}{448}\sqrt{\frac{3}{\pi^2 + 1}}$, then there exists an eigenvalue below the essential spectrum.
	\end{itemize}
Corollary \ref{corollary_discrete_spec_mu} implies (i). To show (ii), note that $\liminf_{y \to \pm \infty}q^*_{\mu_+}(y) =1 $, and since $\mu([-4\sqrt{\frac{\pi^2+1}{3}}, 4\sqrt{\frac{\pi^2+1}{3}}]) = 56c$, we obtain

\begin{equation*}
q^*_{\mu_+}(0) \leq \frac{3}{64(\pi^2 + 1)} = \frac{3}{64(\pi^2 + 1)}\liminf_{y \to \pm \infty}q^*_{\mu_+}(y).
\end{equation*}
Theorems \ref{estimate_smallest_eigenvalue} and \ref{est_essential_spectrum} give (ii).
\end{example}

\begin{example}
	Let $\mu = \sum_{k\in \mathbb{Z}} \delta_{t_k},$ where $t_0=0$, $t_{\pm k}=\pm\sum_{j=1}^{k}\frac{1}{j}$, then
	\begin{itemize}
		\item[(i)]  The spectrum of $H_{\mu}$ is discrete;
		\item[(ii)] The first eigenvalue, $\lambda_1$, satisfies $1/4\leq \lambda_1\leq \frac{4}{3}(\pi^2 + 1)$;
		\item[(iii)] For $p>1/2$, $H_{\mu}^{-1} \in \mathfrak{S}_p$.
	\end{itemize}
\end{example}

Next, we give an example with discontinuous potential.

\begin{example}
	Let $f:\mathbb{R}\rightarrow\mathbb{R}$ be a function such that $f(x) = c_k$ for $x\in (k,k+1]$, where $\{c_k\}_{k\in \mathbb{Z}}$ are non-negative numbers and $\mathrm d\mu(x) = f(x) ~\mathrm dx$. Then
	\begin{itemize}
		\item[i.]  The spectrum of $H_{\mu}$ is discrete if and only if $c_k\rightarrow \infty$ as $k\rightarrow \infty$;
		\item[ii.] Assume that the spectrum is discrete.  Let $p > 1/2$, then $H_{\mu}^{-1} \in \mathfrak{S}_p$ if and only if $\{\frac{1}{c_k}\}_{k\in \mathbb{Z}} \in l^{p-1/2}(\mathbb{Z})$.
	\end{itemize}
By the same arguments that we gave in Example \ref{exmample1}, one can verify (i), (ii).
\end{example}

\begin{example}
		Let $\mathrm d\mu(x) = |x|^\kappa~\mathrm dx$ for some $\kappa \geq 1$. By Lemma \ref{Lemma_convex_concave}, $1/q_\mu^\ast(x) \leq 1/f(x) = |x|^{-\kappa}$ for $|x|$ large. $(|x|^{-\kappa})^{p-1/2}$ is integrable at infinity for $p > 1/\kappa + 1/2$. Hence, $(H_\mu)^{-1} \in \mathfrak S^p$ for such $p$.
\end{example}

\begin{example}
		Let $\mathrm d\mu(x) = |x|^\kappa ~\mathrm dx$ for $\kappa \geq 1$ as above and $\nu$ any positive finite Radon measure. By Lemma \ref{perturation_lemma}, the resolvents of $H_{\mu + \nu}$ are also in $\mathfrak S^p$ for $p > 1/\kappa + 1/2$. In particular, if $\lambda_n$ is the eigenvalue sequence of $H_{\mu + \nu}$, then $(1/\lambda_n) \in O(1/n^{2c/(2+c)})$.
\end{example}

\begin{example}
		For $n \in \mathbb N$ let $\mu \in \mathfrak M_\beta$ be such that $\mathrm d\mu(x) = \ln(|x|)^n~\mathrm dx$ for $|x|$ sufficiently large. It is easy to verify that $H_\mu$ has discrete spectrum. Since $x \mapsto \ln(|x|)^n$ is concave for $|x|$ large enough, Lemma \ref{Lemma_convex_concave} implies that $1/q_{\mu_+}^\ast(x) \geq 1/\ln(|x|)^n$ for $|x|$ large. As $(1/\ln(|x|)^n)^{p-1/2}$ is not integrable at infinity for any $p > 1/2$, the resolvents of $H_\mu$ are compact but in no Schatten class. Again, perturbing $\mu$ by finite measures does not change these properties.
\end{example}

\parindent0cm
\setlength{\parskip}{\baselineskip}

\bibliographystyle{alpha}
\bibliography{FN_Bib}

\setlength{\parskip}{0pt}

\end{document}